\crefname{lemma}{Lemma}{Lemmas}
\crefname{corollary}{Corollary}{Corollaries}
\crefname{theorem}{Theorem}{Theorems}
\crefname{equation}{Equation}{Equations}
\crefname{example}{Example}{Examples}
\crefname{section}{Section}{Sections}
\crefname{subsection}{Section}{Sections}
\DeclareDocumentCommand{\SetStructure}{m m m}
\DeclareDocumentCommand\csname strUctUrE#1fIEld#2\endcsname{} {#3}
\DeclareDocumentCommand{\GetStructure}{m m}
\def\ClassUndErscOrE{UndErscOrE}
\NewDocumentCommand{\ClassAddObject}{m m m +m}
{
	\exp_args:Nc \DeclareDocumentCommand{#1\ClassUndErscOrE#2}{#3}{#4}
}
\DeclareDocumentCommand{\ClassSet}{m m +m}
{
	\ClassAddObject{#1}{#2}{}{#3}
}
\DeclareDocumentCommand{\ClassAddClass}{m m m}
{
	\csname Class#2\endcsname{#3\ClassUndErscOrE}

	\ClassSet{#1}{#3}
	{
	\csname#3\ClassUndErscOrE\endcsname
	}
}
\NewDocumentCommand{\Class}{m +m}
{
	\expandafter\NewDocumentCommand\csname Class#1\endcsname{m}
	{
		
		\expandafter\DeclareDocumentCommand\csname ##1\endcsname {d..}
		{
			\csname##1\ClassUndErscOrE####1\endcsname
		}

		%
		\ClassAddObject{##1}{Set}{m m}
		{
			\ClassSet{##1}{####1}{####2}
		}

		\ClassSet{##1}{Class}{#1}
		
		\ClassSet{##1}{Self}{##1}

	
		\Class_Code:nn {##1}{#2}

	}
}
\DeclareDocumentCommand{\PushStack}{m m}
\DeclareDocumentCommand{\PopStack}{m}
\DeclareDocumentCommand{\PopCounterStack}{m}
\DeclareDocumentCommand{\ReadStack}{m}
\DeclareDocumentCommand{\DeclareStack}{m}
	\newcounter{stAckcOUntEr#1}
\DeclareDocumentCommand\csname Push#1\endcsname {m}
		{\PushStack{#1}{##1}}
\DeclareDocumentCommand\csname Pop#1\endcsname {} 
		{\PopStack{#1}}
\DeclareDocumentCommand\csname Read#1\endcsname {} 
		{\ReadStack{#1}}
\DeclareDocumentCommand\csname Counter#1\endcsname {} 
		{\arabic{stAckcOUntEr#1}}
\DeclareDocumentCommand{\DeclareCounter}{m}%
		\newcounter{#1}%
\DeclareDocumentCommand{\ForLoopcOUntErdEfInEd}{m D[]{0} d[] D(){1} d<>} 
\ifnum\value{#1}<#3%
\DeclareDocumentCommand{\ForLoop}{m D[]{0} d[] D(){1} d<>} 
{%
	\DeclareCounter{#1}%
	\setcounter{#1}{#2}%
%
	\ForLoopcOUntErdEfInEd{#1}[#2][#3](#4)<#5>%
}
\DeclareDocumentCommand{\DeclareMathOperatorBrackets}{m m}{%
%
%
\expandafter\DeclareMathOperator\csname #1mAthOpErAtOrdUmmYvArIAblE\endcsname{#2}
\expandafter\DeclareDocumentCommand\csname #1\endcsname{}{\EM{{\csname #1mAthOpErAtOrdUmmYvArIAblE\endcsname}}}
}
\DeclareDocumentCommand{\DeclareFunctionSymbol}{m m o}{%
%
%
%
\IfNoValueTF{#3}
	{
	\expandafter\DeclareDocumentCommand\csname #1\endcsname{m}{
		#2(##1)
		}
	}
	{
	\expandafter\DeclareDocumentCommand\csname #1\endcsname{d()}{
		\IfNoValueTF{##1}
			{
			#3
			}
			{
			#2(##1)
			}
		}

	}
}
\DeclareDocumentCommand{\Lomega}{m}{{\EM{\mc{L}_{#1, \w}}}}
\DeclareDocumentCommand{\Lww}{}{\Lomega{\w}}
\DeclareDocumentCommand{\Lwow}{}{\Lomega{\w_1}}
\DeclareDocumentCommand{\defn}{m}{{\bf{#1}}}
\DeclareDocumentCommand{\defas}{}{\EM{:=}}
\DeclareDocumentCommand{\st}{}{\EM{\colon}}
\DeclareDocumentCommand{\:}{}{\colon}
\DeclareDocumentCommand{\w}{}{\EM{\omega}}
\DeclareDocumentCommand{\Naturals}{}{{\EM{{\mbb{N}}}}}
\DeclareDocumentCommand{\Nats}{}{\Naturals}
\DeclareDocumentCommand{\NatsIndex}{}{\EM{{\Nats}^{[<\w]}}}
\DeclareDocumentCommand{\SymmetricGroupSymbol}{}{\EM{\mathfrak{S}}}
\DeclareDocumentCommand{\Sym}{m}{\EM{\SymmetricGroupSymbol_{#1}}}
\DeclareDocumentCommand{\SymDiff}{}{\EM{\triangle}}
\DeclareDocumentCommand{\And}{}{\EM{\wedge}}
\DeclareDocumentCommand{\<}{}{\EM{\langle}}
\DeclareDocumentCommand{\>}{}{\EM{\rangle}}
\DeclareDocumentCommand{\qu}{m}{``#1''}
\DeclareDocumentCommand{\nl}{}{\newline}
\DeclareDocumentCommand{\M}{}{{\EM{\mathcal{M}}}}
\DeclareDocumentCommand{\N}{}{{\EM{\mathcal{N}}}}
\DeclareDocumentCommand{\B}{}{{\EM{\mathbb{B}}}}
\DeclareDocumentCommand{\X}{}{{\EM{\mathbb{X}}}}
\DeclareDocumentCommand{\X}{}{{\EM{\mc{X}}}}
\DeclareDocumentCommand{\x}{}{{\EM{\ol{x}}}}
\DeclareDocumentCommand{\y}{}{{\EM{\ol{y}}}}
\DeclareDocumentCommand{\z}{}{{\EM{\ol{z}}}}
\DeclareDocumentCommand{\a}{}{{\EM{\ol{a}}}}
\DeclareDocumentCommand{\b}{}{{\EM{\ol{b}}}}
\DeclareDocumentCommand{\c}{}{{\EM{\ol{c}}}}
\DeclareDocumentCommand{\e}{}{{\EM{\ol{e}}}}
\DeclareDocumentCommand{\f}{}{{\EM{\ol{f}}}}
\DeclareDocumentCommand{\g}{}{{\EM{\ol{g}}}}
\DeclareDocumentCommand{\k}{}{{\EM{\ol{k}}}}
\DeclareDocumentCommand{\z}{}{{\EM{\ol{z}}}}
\DeclareDocumentCommand{\n}{}{{\EM{\textbf{n}}}}
\DeclareDocumentCommand{\cs}{}{\text{cs}}
\DeclareDocumentCommand{\Powerset}{}{\mathfrak{P}}
\DeclareDocumentCommand{\Fraisse}{}{\text{Fra\"{i}ss\'{e}}}
\DeclareDocumentCommand{\Erdos}{}{\text{Erd\H{o}s}}
\DeclareDocumentCommand{\Renyi}{}{\text{R\'{e}nyi}}
\DeclareDocumentCommand{\Lang}{}{\mathrm{L}}
\DeclareDocumentCommand{\Theory}{}{\text{Th}}
\DeclareDocumentCommand{\Th}{}{\Theory}
\DeclareDocumentCommand{\qfpi}{}{\QuantifierFree_\pi}
\DeclareDocumentCommand{\ScottSen}{m}{\EM{\sigma_{#1}}}
\DeclareDocumentCommand{\Rado}{}{\EM{\mathscr{R}}}
\DeclareDocumentCommand{\Trado}{}{\EM{\mathscr{T}}}
\DeclareDocumentCommand{\CanStr}{m}{\mathfrak{C}_{#1}}
\DeclareDocumentCommand{\extent}{m d()}{
\IfNoValueTF{#2}
	{
	{\EM{\llbracket{#1}\rrbracket}}
	}
	{
	{\EM{\llbracket{#2}\rrbracket_{#1}}}
	}
}
\DeclareDocumentCommand{\Extent}{m d()}{
\IfNoValueTF{#2}
	{
	{\EM{\left\llbracket{#1}\right\rrbracket}}
	}
	{
	{\EM{\left\llbracket{#2}\right\rrbracket_{#1}}}
	}
}
\DeclareDocumentCommand{\leqclosure}{m}{{\EM{\sqsubseteq_{#1}}}}
\DeclareDocumentCommand{\eqclosure}{m}{{\EM{\square_{#1}}}}
\DeclareDocumentCommand{\ltclosure}{m}{{\EM{\sqsubset_{#1}}}}
\DeclareDocumentCommand{\nleqclosure}{m}{{\EM{\not\sqsubseteq_{#1}}}}
\DeclareDocumentCommand{\neqclosure}{m}{{\EM{\not\!\square_{#1}}}}
\DeclareDocumentCommand{\nltclosure}{m}{{\EM{\not\sqsubset_{#1}}}}
\DeclareDocumentCommand{\rankclosure}{m}{{\EM{\RankForClosure_{#1}}}}
\DeclareDocumentCommand{\imclosure}{m m}{{\EM{[#2]_{\eqclosure{#1}}}}}
\DeclareDocumentCommand{\DeclareClosure}{m o o}{%
%
%
%

\IfNoValueTF{#2}
	{%
	\expandafter\DeclareMathOperator\csname #1tExtdUmmYvArIAblE\endcsname{#1}
	}
	{%
	\expandafter\DeclareMathOperator\csname #1tExtdUmmYvArIAblE\endcsname{#2}
	}
\IfNoValueTF{#3}
	{%
	\IfNoValueTF{#2}
		{%
		\expandafter\DeclareMathOperator\csname #1sUbscIptdUmmYvArIAblE\endcsname{#1}
		}
		{%
		\expandafter\DeclareMathOperator\csname #1sUbscIptdUmmYvArIAblE\endcsname{#2}
		}
	}
	{%
	\expandafter\DeclareMathOperator\csname #1sUbscIptdUmmYvArIAblE\endcsname{#3}
	}

\expandafter\DeclareDocumentCommand\csname #1\endcsname{}{\csname #1tExtdUmmYvArIAblE \endcsname}

\expandafter\DeclareDocumentCommand\csname leq#1\endcsname{}{\leqclosure{\csname #1sUbscIptdUmmYvArIAblE\endcsname}}

\expandafter\DeclareDocumentCommand\csname eq#1\endcsname{}{\eqclosure{\csname #1sUbscIptdUmmYvArIAblE\endcsname}}

\expandafter\DeclareDocumentCommand\csname lt#1\endcsname{}{\ltclosure{\csname #1sUbscIptdUmmYvArIAblE\endcsname}}

\expandafter\DeclareDocumentCommand\csname nleq#1\endcsname{}{\nleqclosure{\csname #1sUbscIptdUmmYvArIAblE\endcsname}}

\expandafter\DeclareDocumentCommand\csname neq#1\endcsname{}{\neqclosure{\csname #1sUbscIptdUmmYvArIAblE\endcsname}}

\expandafter\DeclareDocumentCommand\csname nlt#1\endcsname{}{\nltclosure{\csname #1sUbscIptdUmmYvArIAblE\endcsname}}

\expandafter\DeclareMathOperator\csname rank#1\endcsname{\rankclosure{\csname #1sUbscIptdUmmYvArIAblE\endcsname}}

\expandafter\DeclareDocumentCommand\csname im#1\endcsname{ m }{\imclosure{\csname #1sUbscIptdUmmYvArIAblE\endcsname}{##1}}
	
}
\DeclareDocumentCommand{\eqd}{}{\EM{\overset{d}{=}}}
\DeclareDocumentCommand{\ProbMeasures}{}{\EM{\mathcal{P}_1}}
\DeclareDocumentCommand{\overlay}{m d()}{
\IfNoValueTF{#2}
	{
	\EM{\mathfrak{O}_{#1}}
	}
	{
	\EM{\mathfrak{O}_{#1}(#2)}
	}
}
\DeclareDocumentCommand{\template}{o m}{
\IfNoValueTF{#1}
	{
	\templatedUmmYvArIAblE(#2)
	}
	{
	\templatedUmmYvArIAblE(#1;#2)
	}
}
\DeclareDocumentCommand{\SigmaTemplate}{o m}{
\IfNoValueTF{#1}
	{
	\SigmaTemplatedUmmYvArIAblE(#2)
	}
	{
	\SigmaTemplatedUmmYvArIAblE(#1;#2)
	}
}
\DeclareDocumentCommand{\k}{}{\EM{\mathbf{k}}}
\DeclareDocumentCommand{\Str}{}{
\EM{\mathscr{Str}}
}
\DeclareDocumentCommand{\NonRedTh}{m}{\mathscr{T}_{#1}}
\DeclareDocumentCommand{\FreeCS}{m}{\EM{\mathfrak{F}(#1)}}
\DeclareDocumentCommand{\RandMod}{m}{\EM{\mathscr{M}(#1)}}
\DeclareDocumentCommand{\StrNR}{}{\EM{\Str^*}}
\DeclareDocumentCommand{\EM}{m}{\ensuremath{#1}}
\DeclareDocumentCommand{\RightJustify}{m}{\hspace*{\fill}\mbox{#1}\penalty-9999\relax}
\DeclareDocumentCommand{\mbb}{m}{\EM{\mathbb{#1}}}
\DeclareDocumentCommand{\mc}{m}{\EM{\mathcal{#1}}}
\DeclareDocumentCommand{\ol}{m}{\EM{\overline{#1}}}
\DeclareDocumentCommand{\ul}{m}{\underline{#1}}
\definecolor{RealGreen}{rgb}{0,1,0}
\definecolor{ActualGreen}{rgb}{0.0,0.5,0.0}
\definecolor{DarkGreen}{rgb}{0, 0.3,0}
\definecolor{Blue}{rgb}{0,0,1}
\definecolor{DarkBlue}{rgb}{0,0,0.6}
\definecolor{Red}{rgb}{1,0,0}
\definecolor{DarkRed}{rgb}{0.5,0,0}
\noindent\IfNoValueTF{#1}
{%
\emph{Proof.\!\!\!\!}
}
{
\emph{Proof\ #1.\ }
}
\DeclareDocumentCommand{\MyQED}{}{\qed}
\noindent\IfNoValueTF{#1}
{\emph{Proof.\!\!}}
{\emph{Proof\ #1.\ }}
\DeclareDocumentCommand{\ProofLabel}{}{%
%
\addtocounter{ProofLabelcOUntEr}{1}
\label{cUrrEntProoflAbEl\arabic{ProofLabelcOUntEr}}
}
\DeclareDocumentCommand{\ProofRef}{D<>{1}}
{%
\ref{cUrrEntProoflAbEl\arabic{ProofcOUntEr#1}}
}
\DeclareDocumentCommand{\ProofCref}{D<>{1}}
{%
\cref{cUrrEntProoflAbEl\arabic{ProofcOUntEr#1}}
}
\def\TheoremDepth{section}
\DeclareDocumentCommand{\DeclareTheorem}{m o m o}{%
%
%
%
%

\IfNoValueTF{#4}
	{%
	\IfNoValueTF{#2}
		{%
		\newtheorem{#1vArIAblE}{#3}
		}
		{%
		\newtheorem{#1vArIAblE}[#2vArIAblE]{#3}
		}
	}
	{%
	\newtheorem{#1vArIAblE}{#3}[#4]%
	}
\newtheorem*{#1vArIAblE*}{#3}

\DeclareDocumentEnvironment{#1}{o o}

	{
	\IfValueT{##2}%
		{
		\begin{spacing}{##2}
		}
	\IfValueTF{##1}
		{
		\begin{#1vArIAblE}[##1]
		}
		{
		\begin{#1vArIAblE}
		}
%
	\ProofLabel
	}
	{
	\IfValueT{##2}%
		{
		\end{spacing}{##2}
		}
	\end{#1vArIAblE}
	}

\DeclareDocumentEnvironment{#1*}{o o}

	{
	\IfValueT{##2}%
		{
		\begin{spacing}{##2}
		}
	\IfValueTF{##1}
		{
		\begin{#1vArIAblE*}[##1]
		}
		{
		\begin{#1vArIAblE*}
		}
	}
	{
	\IfValueT{##2}%
		{
		\end{spacing}{##2}
		}
	\end{#1vArIAblE*}
	}
}
\theoremstyle{plain}
\theoremstyle{definition}
\theoremstyle{remark}
\begin{document}

\title
{Representations of Aut(M)-Invariant Measures}

\author[Ackerman]{Nathanael Ackerman}
\address{Harvard University\\
Cambridge, MA 02138
\\ USA}
\email{nate@aleph0.net}

\begin{abstract}
We study probability measures on the space of countable models which are invariant under the group of automorphisms of a fixed structure. We show that for a class of structures, which we call \emph{free}, every such invariant measure has a representation in terms of a simple collection of measurable functions, akin to the representation given in the Aldous-Hoover-Kallenberg theorem. We then use this to give a characterization of those probability measures which are invariant under the automorphism group of an arbitrary structure, not necessarily free, and which have such a representation. 
\end{abstract}


\maketitle
\thispagestyle{empty}

\vspace*{-20pt}

\begin{small}
\setcounter{tocdepth}{2}
\tableofcontents
\end{small}

\vspace*{-20pt}






\section{Introduction}
By studying the symmetries of a collection of random variables one can gain great insight into their collective structure. In particular, one is often able to deduce properties of a collection of random variables simply by knowing that its joint distribution is preserved under a certain group of symmetries (and indeed much of ergodic theory can be recast in this light). Over the last century, there have been several \emph{representation} results for collections of random variables whose joint distributions are preserved under various, sufficiently large, groups. These representation results show that if a collection of random variables has a joint distribution which is preserved under an appropriate group of symmetries (usually the group of permutations of $\Nats$ or variants of it) then the random variables must have a very simple form. 

As an example, we now consider those collections of random variables which are closed under an action of $\Sym{\Nats}$, the group of permutations of $\Nats$.

\begin{definition}
For a standard Borel space $S$, a sequence of $S$-valued random variables $(X_n)_{n \in \Nats}$ is said to be \emph{exchangeable} if its joint distribution is unchanged by permutations of $\Nats$, i.e.\ for all $\sigma \in \Sym{\Nats}$
\[
(X_n)_{n \in \Nats} \eqd (X_{\sigma(n)})_{n \in \Nats}. 
\]
\end{definition}

A natural class of exchangeable sequences consists of those which are of the form 
\[
(f(\zeta_\emptyset, \zeta_n))_{n \in \Nats}
\]
where $f\:[0,1]^2 \to S$ is any measurable function and $\{\zeta_\emptyset\} \cup \{\zeta_n\}_{n \in \Nats}$ is a collection of uniform identically distributed and independent (i.i.d.) $[0,1]$-valued random variables. An important result of de Finetti and Hewitt--Savage is that all exchangeable sequences have a distribution which is of this form.  This result, known as \emph{de Finetti's theorem}, is a fundamental result of modern probability theory with applications throughout Bayesian nonparametric statistics, genetics, statistical mechanics, and ergodic theory (see, e.g. \cite{MR0494344}, \cite{MR883646}, \cite{MR2161313} Ch.~1.1).  

Let $\NatsIndex$ be the collection of finite sequences of distinct natural numbers and let $\Powerset(\cdot)$ and $\Powerset_{<\w}(\cdot)$ be the powerset and finite powerset operations respectively. 

\begin{definition}
For a standard Borel space $S$, an array of $S$-valued random variables $(X_\a)_{\a \in \NatsIndex}$ is said to be \emph{(jointly) exchangeable} if its distribution is unchanged under permutations of $\Nats$, i.e.\ if for all $\sigma \in \Sym{\Nats}$
\[
(X_\a)_{\a \in \NatsIndex} \eqd (X_{\sigma(\a)})_{\a \in \NatsIndex}. 
\]
\end{definition}

The following is a natural class of exchangeable arrays. For $n \in \Nats$ let $f_n\:[0,1]^{\Powerset([n])}\to S$ and let $(\zeta_\a)_{\a\in \Powerset_{<\w}(\Nats)}$ be a collection of uniform i.i.d.\ $[0,1]$-valued random variables. For $\a \in \NatsIndex$ let $\widehat{\zeta}_\a = (\zeta_\b)_{\b \in \Powerset(\a)}$. The following is then an exchangeable array 
\[
(f_{|\a|}(\widehat{\zeta}_\a))_{\a \in \NatsIndex}.
\]

An important result is that every exchangeable array is equivalent in distribution to one of the above form. This theorem was proved by Aldous, Hoover, and Kallenberg in various levels of generality; for a survey see \cite{MR2463438}, and for a detailed history, see the Historical Notes of \cite{MR2161313}. This result can be seen as a higher dimensional version of de Finetti's theorem, and is known as the \emph{Aldous-Hoover-Kallenberg theorem (for jointly exchangeable arrays)}.  This theorem has also been rediscovered in various forms, notably in terms of the notion of a \emph{graphon}, which can be thought of as the limit of a sequence of dense graphs. For more on this connection see \cite{MR2426176} or \cite{MR2463439}.

The focus in this paper will be on probability measures, and hence on the distributions of random variables. In particular we call the distribution of an exchangeable sequence or array $\Sym{\Nats}$-invariant as it is preserved by all elements of $\Sym{\Nats}$ under the appropriate action. We will be interested in the distribution of arrays which, instead of being preserved under all elements of $\Sym{\Nats}$, only need to be preserved under elements of a closed subgroup of $\Sym{\Nats}$.

It is well known that a subgroup of $\Sym{\Nats}$ is closed if and and only if it is the automorphism group of a countable structure with underlying set $\Nats$. Now if $\M$ is such a structure then there is a natural collection of arrays whose distribution is preserved under all automorphisms of $\M$, i.e. elements of $\Aut(\M)$. Specifically for $\a \in \M$ let $p_\a$ be the orbit of $\a$ under $\Aut(\M)$ and for any such orbit let $f_{p_\a}\:[0,1]^{\Powerset(|\a|)}\to S$ be a measurable function. It is easy to check that the array $(f_{p_\a}(\widehat{\zeta}_\a))_{\a \in \NatsIndex}$ is $\Aut(\M)$-invariant. We call the distribution of such an array \emph{representable}.

A key result of this paper is a characterization of the $\Aut(\M)$-invariant measures which are representable, where $\M$ is an arbitrary structure in a countable language with underlying set $\Nats$. Specifically, we introduce in \cref{Free Structure} the notion of a \emph{free} structure  and produce explicit $\Sym{\Nats}$-invariant measures concentrated on free structures. We then give a general method of combining measures which will allow us, when $\M_{fr}$ is free, to combine an $\Aut(\M_{fr})$-invariant measure with a $\Sym{\Nats}$-invariant measure we construct which is concentrated on $\M_{fr}$. We will then obtain a measure to which we can apply the Aldous-Hoover-Kallenberg theorem. This will then let us deduce in \cref{Aut(M)-invariant measures for M free are representable} that whenever $\M_{fr}$ is free, all $\Aut(\M_{fr})$-invariant measures are representable. We then show that for all countable structures $\M$, there is a minimal free extension $\FreeCS{\M}$ and that an $\Aut(\M)$-invariant measure is representable if and only if it can be extended to an $\Aut(\FreeCS{\M})$-invariant measure (in a sense we will make precise).

These results not only generalize the Aldous-Hoover-Kallenberg theorem to a large class of closed subgroups of $\Sym{\Nats}$, but also provides a way to characterize the representable measures even when such an Aldous-Hoover-Kallenberg like representation theorem fails to hold.

\subsection{Connections To Other Work}

In this subsection we consider how several related results can be viewed as special cases of our representation theorem. 

Suppose $\M_\emptyset$ is the unique structure on $\Nats$ in the empty language. Then $\Aut(\M_\emptyset) = \Sym{\Nats}$, and the Aldous-Hoover-Kallenberg theorem says that all $\Aut(\M_\emptyset)$-invariant measures are representable. As $\M_\emptyset$ is free (a fact which is obvious from \cref{Free Structure}) the Aldous-Hoover-Kallenberg theorem can be seen as a special case of \cref{Aut(M)-invariant measures for M free are representable}, which says whenever $\M$ is free all $\Aut(\M)$-invariant measures are representable.

In addition to considering arrays whose distributions are invariant under a single, simultaneous, permutation of the indices, Aldous and Hoover also considered arrays whose distributions are invariant when the indices are permuted separately. 

\begin{definition}
An array of $S$-valued random variables $(X_\a)_{\a \in \Nats^n}$ is said to be \emph{separately exchangeable} if its distribution is unchanged under coordinatewise permutations of $\Nats$, i.e.\ if for all $\sigma_0, \dots, \sigma_{n-1} \in \Sym{\Nats}$
\[
(X_{\<a_i\>_{i < n}})_{\<a_i\>_{i < n} \in \Nats^n}  \eqd (X_{\<\sigma_i(a_i)\>_{i < n}})_{\<a_i\>_{i < n} \in \Nats^n}. 
\]
\end{definition}

As for jointly exchangeable arrays Aldous and Hoover proved a representation theorem for separately exchangeable arrays of arbitrary (fixed) dimension. However, to simplify the presentation we will restrict ourselves in the following example to the case when $n =2$. Both the Aldous-Hoover theorem for separately exchangeable arrays and the example below generalizes in the obvious way for $n > 2$. For more on separately exchangeable arrays see \cite{MR2161313}.

The Aldous-Hoover theorem for separately exchangeable arrays can be recast in terms of \cref{Aut(M)-invariant measures for M free are representable} as follows.

\begin{example}
Suppose $(X_{a_0, a_1})_{(a_0, a_1) \in \Nats^2}$ is an $S$-valued array. Aldous and Hoover showed that $(X_{a_0, a_1})_{(a_0, a_1) \in \Nats^2}$ is separately exchangeable if and only if there are uniform i.i.d.\ $[0,1]$-valued random variables $\beta, (\zeta_a)_{a\in \Nats}, (\eta_a)_{a\in \Nats}$ and $(\gamma_{i,j})_{i, j \in \Nats}$ along with a measurable function $f\:[0,1]^4 \to S$ such that 
\[
(X_{a_0,a_1})_{a_0,a_{1} \in \Nats} \eqd (f(\beta, \zeta_{a_0}, \eta_{a_1}, \gamma_{a_0, a_1}))_{a_0, a_1 \in \Nats}.
\]
Let $\Lang = \{U_0, U_1\}$  be a language with two unary relations and let $\M_{U, 2}$ be the structure with underlying set $\Nats$ such that $U_0^\M = \{2n \st n \in \Nats\}$ and $U_1^\M = \{2n+1 \st n \in \Nats\}$. Automorphisms of $\M_{U, 2}$ can be represented as pairs $(\sigma, \tau) \in \Sym{\Nats}^2$ where the pair $(\sigma,\tau)(2n) = 2\sigma(n)$ and $(\sigma, \tau)(2n+1) = 2\tau(n) + 1$. Next consider the array $(Y_{2a_0, 2a_1+1})_{(2a_0, 2a_1+1) \in \Nats^2}$ where $Y_{2a_0, 2a_1+1} \defas X_{a_0, a_1}$. Then 
\[
(X_{a_0, a_1})_{a_0, a_1 \in \Nats}
\]
is separately exchangeable if and only if 
\[
(Y_{2a_0, 2a_1+1})_{(2a_0, 2a_1+1) \in \Nats^2}
\]
has an $\Aut(\M_{U, 2})$-invariant distribution. Notice that all of the indices of the array 
\[
(Y_{2a_0, 2a_1+1})_{(2a_0, 2a_1+1) \in \Nats^2}
\]
are in the same $\Aut(\M_{U,2})$-orbit. Therefore the Aldous-Hoover theorem for separately exchangeable arrays is equivalent to the statement that all arrays indexed by elements in the orbit of the pair $\<0,1\>$ in $\M_{U,2}^2$, and which are $\Aut(\M_{U,2})$-invariant, are representable. It is easily seen from  \cref{Free Structure} that $\M_{U,2}$ is free. Therefore the Aldous-Hoover theorem for separately exchangeable arrays can be seen as a special case of \cref{Aut(M)-invariant measures for M free are representable}. 
\end{example}

Independently of the present work, Crane and Towsner (\cite{MR3835071})
have studied the case of $\Aut(\M)$-invariant measures on the collection of
structures in a fixed finite relational language with universe $\Nats$, when $\M$ is freeand ultrahomogeneous. Their notation and framework though are slightly different than ours. In particular, the notion of \emph{relative exchangeability} which they introduce is, for a non-ultrahomogeneous structure $\M$, more restrictive than $\Aut(\M)$-invariance. However, in \cite{MR3835071} they restrict their attention to ultrahomogeneous structures and, as they observe, in this setting the two notions coincide. Further their notion of an ultrahomogeneous structure whose age has $n$-DAP for all $n \geq 1$ is precisely that of a free ultrahomogeneous structure in our sense. 

With these translations Theorem~3.2 of \cite{MR3835071}, which is one of their two main representation theorems, is equivalent to a finitary version of \cref{Aut(M)-invariant measures for M free are representable}. Specifically it is, essentially, \cref{Aut(M)-invariant measures for M free are representable} restricted to the case where the probability measure is invariant under the action of the automorphisms of a structure which is ultrahomogeneous in a finite relational language (i.e. $\Aut(\M)$ where $\Lang_\M$ is finite) and is on a space of structures in a finite relational language (i.e. $\Str_\Lang(\M)$ with $\Lang$ finite). One of the main advantages of \cref{Aut(M)-invariant measures for M free are representable} over Theorem~3.2 of \cite{MR3835071} is that by allowing the language of the free structure $\M$ to be infinite we are able to apply \cref{Aut(M)-invariant measures for M free are representable} to arbitrary free structures by first passing to the canonical structure (which is never in a finite language). This in turn allows us to characterize representable measures over any structure, as in \cref{Equivalence of extension to free structure and representability}.

\subsection{Outline}

We now give an outline of the rest of the paper. First, in \cref{Notation Subsection} we collect the notation which we will need. In \cref{Preliminaries Section} we give background and preliminary material which will be important later. In particular in \cref{Polish Group Actions Subsection} we introduce the notion of a canonical structure and review basic facts from the study of Polish group actions. In \cref{Infinitary Logic Subsection} we recall fundamental ideas from the study of $\Lwow$ and introduce the important notions of non-redundant quantifier free types as well as ordered quantifier free types. In \cref{Definable Expansion Subsection} we introduce the notion of definable expansions and show that for every fragment and every language there is a theory (in a different language) which admits quantifier elimination for the fragment, which is non-redundant, and which is interdefinable with the empty theory in the original language. In \cref{Invariant Measures Subsection} we introduce the notion of an invariant measure. These are the main object of study in this paper. In  \cref{Aldous-Hoover-Kallenberg Subsection} we recall the Aldous-Hoover-Kallenberg theorem as well as related results. These results are a key building block of the main results of this paper.  We then end the preliminaries with \cref{Existence of S-Nat-Invariant Measures Subsection} where we recall the necessary and sufficient conditions for a sentence of $\Lwow$ to admit an $\Sym{\Nats}$-invariant measure. 

In \cref{Free Structures Section} we prove various results related to free structures. Specifically, in \cref{Canonical Structures Subsection} we continue our discussion of canonical structures and introduce a notion of map between canonical structures which will be important. Then, in \cref{Free Structures Subsection}, we introduce the notion of a free structure, show that every canonical structure is contained in a minimal free structure, and give an explicit example, for each free structure, of a $\Sym{\Nats}$-invariant measure concentrated on its isomorphism class. 

In \cref{Merging of Measures Section} we introduce the notion of merging measures. Specifically we show that if $\Aut(\M) \subseteq \Aut(\N)$ then we can combine (in a unique way) an $\Aut(\N)$-invariant measure on the space of $\Lang_\M$-structures and concentrated on the isomorphism class of $\M$ with an $\Aut(\M)$-invariant measure on the space of $\Lang$-structures to get an $\Aut(\N)$-invariant measure on the space of $\Lang$-structures which is concentrated on the isomorphism class of $\M$. Further every $\Aut(\N)$-invariant measure on the space of $\Lang$-structures which is concentrated on the isomorphism class of $\M$ arises as such a combination. We will use this result to take an $\Aut(\M_{fr})$-invariant measure, when $\M_{fr}$ is free, and combine it with the measure we constructed in \cref{Free Structures Subsection} to get a $\Sym{\Nats}$-invariant measure to which we can apply the Aldous-Hoover-Kallenberg theorem. In \cref{Inherited Properties Of Merged Measures Subsection} we will consider properties the merged measures inherit from their parts as well as give a characterization, for any free structure $\M_{fr}$ of those sentences of $\Lwow$ which admit an $\Aut(\M_{fr})$-invariant measure.

In \cref{Representations Section} we put everything together to study the representability of $\Aut(\M)$-invariant measures. In \cref{Aut(M)-Recipes Subsection} we introduce the notion of an $\Aut(\M)$-recipe. Representable measures will be exactly those which are the distribution of an $\Aut(\M)$-recipe. In \cref{Representability and Free Structures Subsections} we prove two of the main results of this paper. First we show that if $\M_{fr}$ is free then every $\Aut(\M)$-invariant measure is representable. Then we show that for any structure $\M$ an $\Aut(\M)$-invariant measure is representable if and only if it has an extension to an $\Aut(\FreeCS{\M})$-invariant measure where $\FreeCS{\M}$ is the minimal free structure containing $\M$. In \cref{Ergodic Representations Subsection} we give a characterization of when an $\Aut(\M)$-recipe is gives rise  to an ergodic measure.

\subsection{Notation}
\label{Notation Subsection}

For $n \in \Nats$ we let $[n] = \{0, \dots, n-1\}$ and let $[\w] = \Nats$. 
We will use $\NatsIndex$ to denote the collection of finite sequences of distinct natural numbers and let $\Nats^{[<d]}$ be the collection of finite sequences of distinct natural numbers of length less than $d$. 

We let $\Powerset(X)$ denote the collection of all subsets of $X$.
For $n \leq \w$ and $\boxdot \in \{=, < , \leq\}$ we let $\Powerset_{\boxdot n}(X)$ be the collection of subsets of $X$ which have cardinality $\boxdot n$.  When $\a$ is a sequence and no confusion can arise we will abuse notation and let $\Powerset(\a)$ denote the set of subsequences of $\a$. 

For $\k = (k_0, \dots, k_{d-1}) \in \NatsIndex$ and $I = \{i_1, \dots, i_m\} \in \Powerset(d)$ with $i_1 < \cdots < i_m$ we let $k \circ I = \{k_{i_1}, \dots, k_{i_m}\}$. Similarly for $K = \{k_0, \dots, k_{d-1}\} \in \Powerset_{d}(\Nats)$ with $k_0 < \dots < k_{d-1}$ and $I = \{i_1, \dots, i_m\} \in \Powerset(d)$ with $i_1 < \cdots < i_m$ we let $K \circ I = \{k_{i_1}, \dots, k_{i_m}\}$.
If $(E_\a)_{\a \in \Powerset_{<\w}(\Nats)}$ is an indexed collection of objects and $\b \in \NatsIndex$ we let $\widehat{E}_\b \defas \<E_{\b \circ I}\>_{I \in \Powerset(|\b|)}$ and if $B \in \Powerset_{<\w}(\Nats)$ we let $\widehat{E}_B \defas \<E_{B \circ I}\>_{I \in \Powerset(|\b|)}$. 

If $\equiv$ is an equivalence relation on a set $X$ and $x \in X$ let $[x]_{\equiv}$ be the $\equiv$-equivalence class of $x$. If $A, B$ are sets we let $A \SymDiff B$ be the symmetric difference of $A$ and $B$.

All languages will be countable and relational. Note that by a standard interpretation of functions by their graphs, restricting to relational languages yields no loss of generality. Further $\Lang$ and its variants with decorations will always represent languages. If $R$ is a relation we let $\arity(R)$ be its arity. We denote by $\Lang^n$ the sub-language of $\Lang$ consisting of those relations of arity exactly $n$ and by $\Lang^{\leq n}$ be the sublanguage of $\Lang$ consisting of those relations of arity at most $n$. We let $\Lww(\Lang)$ be the collection of first order formulas in the language $\Lang$ and we let $\Lwow(\Lang)$ be the collection of infinitary formulas in the language $\Lang$ with at most countable sized conjunctions and disjunctions. All formulas will be in $\Lwow(\Lang)$ for some language $\Lang$. For a formula $\varphi$, and $i \in \Nats$, we let $\neg^i \varphi$ stand for $\neg \varphi$ if $i$ is odd and $\varphi$ if $i$ is even. For a formula $\varphi(\x, y) \in \Lwow(\Lang)$ we let $(\exists^{=1} y)\varphi(\x, y)$ stand for ``there exists a unique $y$ satisfying $\varphi(\x, y)$''. 

The symbol $\M$ and its variants with decorations will always be countable structures for some language and we will use $\M$ for both the structure and the underlying set when no confusion can arise. We will use $\Lang_\M$ to denote the language of $\M$. When $\a$ is a finite tuple of elements from $\M$ we will abuse notation and write $\a \in \M$ to denote $\a \in \M^{|\a|}$. 

Suppose $\x = \<x_i\>_{i \in [n]}$ where $n \leq \w$. We define a function $\gamma_\x\: [0,1] \to \x$ as follows. First $\gamma_\x(1) \defas x_0$. If $n < \w$ and $y\in [0,1)$ then $\gamma_{\x}(y) \defas x_i$ if and only if $y \in [\frac{i}{n}, \frac{i+1}{n})$. If $n = \w$ and $y \in [0,1)$ then $\gamma_\x(y) \defas x_i$ if and only if $y \in [1 - 2^{-i}, 1 - 2^{-(i+1)})$. 

We say $(\zeta_i)_{i \in I}$ is a \defn{$U[0,1]$-array} if $(\zeta_i)_{i \in I}$ consists of i.i.d.\ $[0,1]$-valued random variables with uniform distribution. We let $\lambda$ be the Lebesgue measure. For random variables $X, Y$ taking values in the same space we let $X \eqd Y$ denote the fact that $X$ and $Y$ have the same distribution. We will use \qu{\as} to denote the phrase \qu{almost surely}. 

All measures in this paper will be probability measures and all spaces will be standard Borel spaces. Further $S$ and its variants with decorations will always be standard Borel spaces and we let $\ProbMeasures(S)$ be the collection of probability measures on $S$. 

For a set $X$ we denote by $\Sym{X}$ the collection of permutations of $X$. We will consider $\Sym{\Nats}$ as a Polish group with the subspace topology inherited from $\Nats^\Nats$. If $\M$ is an $\Lang$-structure we denote by $\Aut(\M)$ the collection of automorphisms of $\M$. 

For any notions of probability theory not explicitly covered here we refer the reader to \cite{MR1876169}. For any notions of model theory not covered here we refer the reader to \cite{MR0424560}. For any notions of descriptive set theory we refer the reader to \cite{MR1321597} or \cite{MR1425877}.

\section{Preliminaries}
\label{Preliminaries Section}

In this section we recall some important facts and results which will be used later.

\subsection{Polish Group Actions}
\label{Polish Group Actions Subsection}

Suppose $G$ is a closed subgroup of $\Sym{\Nats}$. For $\a, \b \in \NatsIndex$ let $\a \sim_G \b$ if there is a $g \in G$ such that $\a = g(\b)$. It is immediate that $\sim_G$ is an equivalence relation on $\NatsIndex$ in which $\sim_G$-equivalent tuples have the same length. 

Let $\Lang_G \defas \{R_{[\a]_{\sim_G}}(\x) \st \a \in \NatsIndex\}$ where $\arity(R_{[\a]_{\sim_G}}) = |\a|$ for $\a \in \NatsIndex$. We call $\Lang_G$ the \defn{canonical language} of $G$. Now let $\CanStr{G}$ be the $\Lang_G$-structure with underlying set $\Nats$ such that $\CanStr{G} \models R_{A}(\b)$ if and only if $\b \in A$. We call $\CanStr{G}$ the \defn{canonical structure} of $G$. The following two lemmas are then immediate. 

\begin{lemma}[\cite{MR1425877} Sec.~1.5]
\label{Info about canonical structure of a group}
If $G$ is any closed subgroup of $\Sym{\Nats}$ then 

\begin{itemize}
\item $G = \Aut(\CanStr{G})$.

\item $\CanStr{G}$ is the canonical structure of $\Aut(\CanStr{G})$.  

\item $\CanStr{G}$ is ultrahomogeneous, i.e.\ any isomorphism between finite structures extends to an automorphism. 

\end{itemize}
\end{lemma}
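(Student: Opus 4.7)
The plan is to prove the three bullets in order; each reduces to the observation that the relations $R_{[\a]_{\sim_G}}$ placed into $\Lang_G$ are rich enough to encode precisely the $G$-orbit structure on finite tuples.

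For the first bullet, the inclusion $G \subseteq \Aut(\M_G)$ is immediate from the definition: each relation of $\M_G$ is literally a $\sim_G$-class, which is $G$-invariant by construction. For the reverse inclusion, I would use crucially that $G$ is \emph{closed} in $\Sym{\Nats}$ under the topology of pointwise convergence. Basic neighborhoods of a point $h \in \Sym{\Nats}$ in this topology are determined by specifying $h$ and $h^{-1}$ on a finite set, so to show $h \in \Aut(\M_G)$ implies $h \in G$, it suffices to approximate $h$ on arbitrary finite pieces by elements of $G$. Given any $\a \in \NatsIndex$, the fact that $h$ preserves $R_{[\a]_{\sim_G}}$ together with $\M_G \models R_{[\a]_{\sim_G}}(\a)$ forces $\M_G \models R_{[\a]_{\sim_G}}(h(\a))$, i.e.\ $h(\a) \sim_G \a$; unwinding the definition of $\sim_G$ yields some $g \in G$ with $g(\a) = h(\a)$. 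Absorbing $h^{-1}$ of a finite set into the tuple $\a$ makes the same $g$ approximate $h^{-1}$ as well, so $h$ lies in the closure of $G$, hence in $G$.

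For the second bullet, I would invoke the first bullet: since $G = \Aut(\M_G)$, the orbit equivalence $\sim_{\Aut(\M_G)}$ coincides with $\sim_G$, so the canonical language and canonical structure built from $\Aut(\M_G)$ are literally $\Lang_G$ and $\M_G$ by the definitions given just above the lemma. For the third bullet, suppose $\psi$ is an isomorphism between finite substructures of $\M_G$ carrying an enumeration $\a$ to $\b$. The relation $R_{[\a]_{\sim_G}}$ is preserved by $\psi$ and holds of $\a$, so it holds of $\b$; hence $\b \sim_G \a$, and any witnessing $g \in G$ is an automorphism of $\M_G$ extending $\psi$.

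The only place requiring care is the reverse inclusion in the first bullet, where one must remember that $\Sym{\Nats}$ is a topological group in the two-sided sense, so a basic neighborhood constrains both $h$ and $h^{-1}$ on a finite set; this is handled, as noted, by enlarging the finite tuple used to approximate $h$. Beyond that there is no obstacle: the whole argument is a direct unwinding of the definitions of $\Lang_G$ and $\M_G$, which is why the author flags both lemmas as immediate and defers to \cite{MR1425877}.
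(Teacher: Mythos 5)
Your proof is correct and is the standard argument; the paper gives no proof of its own, deferring to Becker--Kechris. One small overcaution: with the topology the paper actually uses (the subspace topology inherited from $\Nats^\Nats$), a basic open neighborhood of $h$ constrains only $h$ on a finite set, so once you produce $g\in G$ with $g(\a)=h(\a)$ for an enumeration $\a$ of an arbitrary finite set you already have $h\in\overline{G}=G$ --- the extra step of enlarging $\a$ to also control $h^{-1}$ is harmless but unnecessary.
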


\begin{lemma}
If $\M$ is a structure with underlying set $\Nats$ then $\Aut(\M)$ is a closed subgroup of $\Sym{\Nats}$. 
\end{lemma}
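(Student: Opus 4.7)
The plan is to verify the two separate assertions — that $\Aut(\M)$ is a subgroup of $\Sym{\Nats}$, and that it is closed in the pointwise-convergence topology — in that order. The subgroup part is essentially bookkeeping: the identity map is trivially an automorphism, the composition of two $\Lang_\M$-automorphisms is an $\Lang_\M$-automorphism because each relation symbol is preserved in both directions, and the inverse of a bijective relation-preserving map is again relation-preserving (here one uses that $\M$ is a relational structure and that $g^{-1}$ preserves the \emph{negation} of each relation because $g$ does, so $g^{-1}$ preserves each relation).

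For closedness, I would recall that the topology on $\Sym{\Nats}$, inherited from the product topology on $\Nats^\Nats$, is generated by the basic neighborhoods specifying the values of a permutation on a finite set (with the companion specification on the inverse to stay inside $\Sym{\Nats}$). Convergence $g_n \to g$ in this topology is therefore equivalent to eventual pointwise agreement: for each $k \in \Nats$ there is an $N_k$ such that $g_n(k) = g(k)$ for all $n \geq N_k$, and similarly for the inverses.

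To show $\Aut(\M)$ is closed, I would take a net (sequence suffices since the space is metrizable) $g_n \in \Aut(\M)$ with $g_n \to g \in \Sym{\Nats}$, fix a relation symbol $R \in \Lang_\M$ of arity $k$ and a tuple $\a = (a_1, \dots, a_k) \in \Nats^k$, and choose $n$ large enough that $g_n$ agrees with $g$ on each of $a_1, \dots, a_k$ and that $g_n^{-1}$ agrees with $g^{-1}$ on each of $g(a_1), \dots, g(a_k)$. Since $g_n \in \Aut(\M)$, we have $\M \models R(\a)$ iff $\M \models R(g_n(\a))$, which equals $R(g(\a))$; running the same argument with $g^{-1}$ shows the reverse implication too. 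Hence $g$ preserves every relation of $\Lang_\M$ in both directions, so $g \in \Aut(\M)$.

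There is no real obstacle here — every step is either a standard topological computation or a direct application of the definition of automorphism — so the writeup should be short. The only place where one has to be a little careful is ensuring the preservation of \emph{non-}relations, which is automatic once one remembers both that $\Lang_\M$ is relational (so there are no function symbols requiring separate treatment) and that the Polish group topology on $\Sym{\Nats}$ really is pointwise convergence of both $g_n$ and $g_n^{-1}$.
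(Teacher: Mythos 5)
Your proof is correct, and fills in, at the right level of detail, an argument that the paper itself does not spell out: the paper simply states this lemma as immediate without proof. Your subgroup check and the pointwise-limit argument for closedness are both the standard route. One small point of precision: the subspace topology on $\Sym{\Nats}$ inherited from $\Nats^\Nats$ is, by definition, generated by basic opens that constrain a permutation's values on finitely many points only (no constraint on the inverse); the fact that convergence within $\Sym{\Nats}$ additionally forces pointwise agreement of the inverses is a consequence (easily seen, since if $g_n(a)=g(a)$ eventually and $g$ is bijective then $g_n^{-1}(g(a))=a=g^{-1}(g(a))$ eventually), not part of the definition of the topology. Your argument uses this correctly; you could even bypass it, since once $g$ preserves $R$ and is bijective, $g^{-1}$ automatically preserves $R$ as well by substituting $\a = g^{-1}(\b)$.
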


In particular for the purposes of studying closed subgroups of $\Sym{\Nats}$ it suffices to restrict our attention to groups of the form $\Aut(\M)$ where $\M$ is the $\Aut(\M)$-canonical structure. This is significant because there is a concrete representation of actions of $G$ for $G$ a closed subgroup of $\Sym{\Nats}$ in terms of its canonical structure. 

\begin{definition}
Suppose $G$ is a Polish group. A \defn{$G$-space} is a pair $(\circ_X, X)$ where
\begin{itemize}
\item $X$ is a Borel space. 

\item $\circ\: G \times X \to X$ is a Borel map. 

\end{itemize}

A function $i\:(\circ_X, X) \to (\circ_Y, Y)$ is a map of $G$-spaces if it is a Borel function such that $(\forall g \in G)(\forall x \in X)\ i(\circ_X(g, x)) = \circ_Y(g, i(x))$.  
\end{definition}

If $(\circ_X,X)$ is a $G$-space then we extend the action of $G$ to subsets of $X$ where, for $A \subseteq X$ and $g \in G$, $gA \defas \{\circ_X(g, a)\st a \in A\}$. 

\begin{definition}
Suppose $\M$ is an $\Lang_\M$-structure with underlying set $\Nats$ and suppose $\Lang$ is disjoint from $\Lang_\M$. We define $\Str_{\Lang}(\M)$ to be the collection of $\Lang_\M \cup \Lang$ structures with underlying set $\Nats$ such that whenever $\N \in \Str_\Lang(\M)$ then $\N|_{\Lang_\M} = \M$. 

For $k \in \Nats$ (possibly $0$), $\varphi(x_0, \dots, x_{k-1})\in \Lwow(\Lang_\M \cup \Lang)$ and $n_0, \dots, n_{k-1} \in \Nats$ define $\extent{\varphi(n_0, \dots, n_{k-1})}_\M \defas \{\N \in \Str_\Lang(\M) \st \N \models \varphi(n_0, \dots, n_{k-1})\}$. 

We give $\Str_\Lang(\M)$ the topology generated by the clopen subbasis 
\[
\{\extent{R(n_0, \dots, n_{k-1})}_\M\st R \in \Lang, \arity(R) = k, n_0, \dots, n_{k-1} \in \Nats\}
\]

\end{definition}

When $\M_\emptyset$ is the unique structure on $\Nats$ in the empty language we will denote $\Str_\Lang(\M_\emptyset)$ by $\Str_\Lang$. In this case observe that $\Aut(\M_\emptyset) = \Sym{\Nats}$. 

Now for any $\M$, there is a natural action of $\Aut(\M)$ on $\Str_\Lang(\M)$. 

\begin{definition}
Suppose $\M$ is an $\Lang_\M$-structure with underlying set $\Nats$ and $\Lang$ is a language disjoint from $\Lang_\M$. We define the action $\circ_\M\: \Aut(\M) \times \Str_\Lang(\M) \to \Str_\Lang(\M)$ where, for $g \in \Aut(\M)$, $\N \in \Str_\Lang(\M)$, $\circ_\M(g, \N)$ is the structure $g\N$ such for all $R \in \Lang$ of arity $k$ and $n_0, \dots, n_{k-1} \in \Nats$
\[
g\N \models R(n_0, \dots, n_{k-1})\text{ if and only if } \N \models R(g^{-1}(n_0), \dots, g^{-1}(n_{k-1}))
\]
\end{definition}

It is immediate that $(\circ_\M, \Str_\Lang(\M))$ is an $\Aut(\M)$-space. 

\begin{lemma}
\label{Str_L(M) is universal for Aut(M) actions}
Suppose $\M$ is an $\Lang_\M$-structure with underlying set $\Nats$ and $\Lang$ is a language disjoint from $\Lang_\M$. Further suppose $\Lang$ has relations of unbounded arity. Then $(\circ_\M, \Str_\Lang(\M))$ is a universal $\Aut(\M)$-space, i.e.\ an $\Aut(\M)$-space which contains an isomorphic copy of every other $\Aut(\M)$-space as a subspace.

\end{lemma}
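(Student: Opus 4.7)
The plan is to reduce this to the classical Becker--Kechris universality theorem for closed subgroups of $\Sym{\Nats}$. The key preliminary observation is that $(\circ_\M, \Str_\Lang(\M))$ is $\Aut(\M)$-equivariantly Borel-isomorphic to $\Str_\Lang$ equipped with the restricted logic action of $\Aut(\M) \leq \Sym{\Nats}$: the forgetful map $\pi \colon \Str_\Lang(\M) \to \Str_\Lang$ sending $\N$ to its $\Lang$-reduct $\N|_\Lang$ is a homeomorphism (its inverse reattaches the fixed $\Lang_\M$-reduct $\M$, which is continuous since it merely intersects basic clopen sets indexed by $\Lang$ with the full space), and for $g \in \Aut(\M)$ the identity $g\M = \M$ yields $(g\N)|_\Lang = g(\N|_\Lang)$, establishing equivariance.

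It therefore suffices to show that $\Str_\Lang$, with the restricted action of a closed subgroup $G \leq \Sym{\Nats}$, is a universal Polish $G$-space whenever $\Lang$ has relations of unbounded arity. This is precisely the form of the Becker--Kechris theorem given in \cite{MR1425877}: every Polish $G$-space admits a Borel $G$-equivariant injection onto a Borel $G$-invariant subset of $\Str_\Lang$. Applied with $G = \Aut(\M)$ to an arbitrary Polish $\Aut(\M)$-space $(\circ_Y, Y)$ and then composed with the inverse of $\pi$, this yields the required Borel $\Aut(\M)$-equivariant injection $Y \hookrightarrow \Str_\Lang(\M)$.

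The main obstacle is simply invoking Becker--Kechris in the correct generality, i.e.\ for arbitrary closed subgroups of $\Sym{\Nats}$ rather than only for $\Sym{\Nats}$ itself. If a direct proof is preferred, the standard alternative is to form the induced Polish $\Sym{\Nats}$-space $\Sym{\Nats} \times_{\Aut(\M)} Y$, apply the classical universality of the logic action of $\Sym{\Nats}$ on $\Str_{\Lang}$, and restrict the resulting $\Sym{\Nats}$-embedding to the copy of $Y$ sitting inside as $\{e\} \times_{\Aut(\M)} Y$; the only technical point there is verifying that the induced space is Polish, which ultimately relies on $\Aut(\M)$ being closed in $\Sym{\Nats}$.
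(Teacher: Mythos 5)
Your proposal is correct and takes essentially the same route as the paper: the paper's entire proof is a citation of the Becker--Kechris universality theorem for closed subgroups of $\Sym{\Nats}$ (\cite{MR1425877} Thm.~2.7.4), which is exactly the result you invoke. Your spelling out of the $\Aut(\M)$-equivariant identification of $\Str_\Lang(\M)$ with $\Str_\Lang$ under the restricted logic action, and the alternative via the induced $\Sym{\Nats}$-space, are correct elaborations of what the paper leaves to the reader.
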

\begin{Proof}
\cite{MR1425877} Thm.~2.7.4.
\end{Proof}

In particular \cref{Info about canonical structure of a group} and \cref{Str_L(M) is universal for Aut(M) actions} tell us that if $G$ is a closed subgroup of $\Sym{\Nats}$ then the study of $G$-invariant measures on $G$-spaces is equivalent to the study of $\Aut(\M)$-invariant measures on $\Str_\Lang(\M)$. This is significant as it allows us to translate the study of $G$-invariant measures from the realm of descriptive set theory to the realm of model theory. 

\begin{definition}
We say a Borel set $A\subseteq \Str_\Lang(\M)$ is \defn{$\Aut(\M)$-invariant} if for all $g \in \Aut(\M)$, $gA = A$.
\end{definition}

Note this notion is sometimes called \emph{strict invariance} to contrast it with \cref{Almost sure invariance and ergodic}. 

It is immediate that if $\tau \in \Lwow(\Lang_\M \cup \Lang)$ is a sentence then $\extent{\tau}_\M$ is an $\Aut(\M)$-invariant Borel subset of $\Str_\Lang(\M)$ and hence $\extent{\tau}_\M$ inherits the structure of an $\Aut(\M)$-space.

\begin{definition}
We say a sentence $T \in \Lwow(\Lang_\M \cup \Lang)$ is \defn{$\Aut(\M)$-universal} if $\extent{T}_\M$ is a universal $\Aut(\M)$-space.

\end{definition}

We will often want to assume our models satisfy some basic syntactic properties, e.g.\ non-redundancy of relations, Morleyized for a fragment, etc. Provided we can find an $\Aut(\M)$-universal theory whose models are exactly those with the desired syntactic properties, then there is no loss in generality in assuming our structures satisfy those properties. We will come back to this in \cref{Definable Expansion Subsection}.

\subsection{Infinitary Logic}
\label{Infinitary Logic Subsection}

In this section we recall some basic facts and definitions from infinitary logic. In particular it will be important in what follows to pin down various notions of quantifier free type. First we recall some basic properties of structures.

\begin{lemma}[\cite{MR0424560} Ch.~VII.6]
Suppose $\M$ is a countable $\Lang_\M$-structure. Then there is a sentence $\ScottSen{\M}\in \Lwow(\Lang_\M)$, called the \defn{Scott sentence} of $\M$, such that for each countable $\Lang_\M$-structure $\N$
\[
\N \models \ScottSen{\M}\text{ if and only if }\M \cong \N.
\]
\end{lemma}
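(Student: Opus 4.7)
The plan is to carry out Scott's back-and-forth analysis and isolate a countable ordinal at which it stabilizes. Since $\Lang_\M$ is countable and $\M$ is countable, there are only countably many finite tuples from $\M$, and we will exploit this to keep everything within $\Lwow(\Lang_\M)$.

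First I would define, for each ordinal $\alpha$ and each finite tuple $\a \in \M$, a formula $\varphi^\alpha_\a(\x) \in \Lwow(\Lang_\M)$ by transfinite recursion on $\alpha$. At stage $0$, take $\varphi^0_\a(\x)$ to be the conjunction of all atomic and negated atomic $\Lang_\M$-formulas satisfied by $\a$; since $\Lang_\M$ is countable this is a legitimate $\Lwow$-formula. At a successor $\alpha+1$, set
\[
\varphi^{\alpha+1}_\a(\x) \;\defas\; \varphi^\alpha_\a(\x) \;\And\; \bigwedge_{b \in \M} \exists y\, \varphi^\alpha_{\a b}(\x, y) \;\And\; \forall y\, \bigvee_{b \in \M} \varphi^\alpha_{\a b}(\x, y),
\]
and at a limit $\lambda$, take the conjunction of the $\varphi^\alpha_\a$ for $\alpha < \lambda$. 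All of these remain in $\Lwow(\Lang_\M)$ because $\M$ is countable.

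Next I would define an equivalence relation $\a \sim_\alpha \b$ on tuples of equal length by $\M \models \varphi^\alpha_\a(\b)$, note that the sequence $(\sim_\alpha)_\alpha$ is decreasing, and observe that on the countable set of finite tuples from $\M$ this must stabilize at some countable ordinal $\alpha_\M$ (the \emph{Scott rank}). Then put
\[
\ScottSen{\M} \;\defas\; \varphi^{\alpha_\M}_{\emptyset} \;\And\; \bigwedge_{\a \in \M^{<\w}} \forall \x\, \bigl(\varphi^{\alpha_\M}_\a(\x) \to \varphi^{\alpha_\M + 1}_\a(\x)\bigr),
\]
which is again a sentence of $\Lwow(\Lang_\M)$. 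Clearly $\M \models \ScottSen{\M}$.

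The main step is the converse: if $\N$ is a countable $\Lang_\M$-structure with $\N \models \ScottSen{\M}$, then $\M \cong \N$. I would prove this by a back-and-forth argument whose partial isomorphisms are exactly the maps $\a \mapsto \b$ (for $\a \in \M$, $\b \in \N$ of equal length) such that $\N \models \varphi^{\alpha_\M}_\a(\b)$. The second clause of $\ScottSen{\M}$ precisely says that this family is closed under the back-and-forth extension step in $\N$, while the inductive definition of $\varphi^{\alpha_\M + 1}_\a$ inside $\M$ provides the extension step in $\M$; the base clause $\varphi^{\alpha_\M}_\emptyset$ gets the process started. A standard enumeration of $\M$ and $\N$ then yields an isomorphism.

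The only genuine obstacle is showing that the back-and-forth family is nonempty and closes under extension on the $\N$-side using only $\ScottSen{\M}$; this is exactly where the stability of $\sim_\alpha$ at $\alpha_\M$ is used, since we need that the $\alpha_\M$-type determines the $(\alpha_\M + 1)$-type inside $\M$ so that the conjunctions and disjunctions over $b \in \M$ appearing in $\varphi^{\alpha_\M+1}_\a$ can be matched by suitable witnesses in $\N$. Everything else is routine bookkeeping with infinitary formulas, and the full argument is given in the cited chapter of Keisler.
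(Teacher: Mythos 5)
The paper does not give a proof of this lemma; it simply cites Keisler. Your argument is a correct reconstruction of the standard Scott analysis from that reference — the transfinite recursion defining the formulas $\varphi^\alpha_\a$, the stabilization of the induced back-and-forth equivalence relations at a countable Scott rank, the two-clause Scott sentence, and the back-and-forth system built from the $\alpha_\M$-types — so it takes essentially the same approach as the cited source.
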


We will often want to focus on structures $\M$ which are, in some sense, far from being rigid (and hence will have a large automorphism group). One way to express this is by saying the structure has trivial definable closure. 

\begin{definition}
For $\a \in \M$ the \defn{(group theoretic) definable closure} of $\a$ is the set 
\[
\dcl(\a) \defas \{b \in \M \st (\forall g \in \Aut(\M))\ g(\a) = \a \rightarrow g(b) = b\}.
\]
We say $\M$ has \defn{trivial definable closure}, or \defn{trivial dcl}, if 
\[
(\forall \a \in \M)\ \dcl(\a) = \a.
\] 
\end{definition}

We will often want to restrict attention to subsets of $\Lwow(\Lang)$ with basic closure properties.

\begin{definition}
A \defn{fragment} is a subset of $A \subseteq \Lwow(\Lang)$ which is closed under
\begin{itemize}
\item Sub-formulas,

\item finite Boolean operations, and

\item $(\exists x), (\forall x)$. 

\end{itemize}

An \defn{$A$-theory} is a collection of sentences of the form $\{\varphi \in A \st \M \models \varphi\}$ for some $\Lang$-structure $\M$. 

\end{definition}

In addition to having a notion of trivial dcl for a structure, we will also want an analogous notion for a theory in a fragment. 

\begin{definition}
Suppose $A$ is a fragment and $T$ is an $A$-theory. We say $T$ has \defn{trivial definable closure} (or trivial dcl) if there does not exist a formula $\varphi(\x, y) \in A$ such that 
\[
T \models (\exists \x)(\exists^{=1} y)\varphi(\x, y)
\] 
\end{definition}

Note that a structure $\M$ has trivial dcl if and only if for any fragment $A$, any $A$-theory containing $\ScottSen{\M}$ has trivial dcl. Similarly a structure $\M$ has trivial dcl if and only if there is some fragment $A$ and some $A$-theory containing $\ScottSen{\M}$ which as trivial dcl. 

\begin{definition}
We say a sentence $T \in \Lwow(\Lang)$ is \defn{Morleyized} for a fragment $A$ if for all $\varphi(\x) \in A$ there is a relation $R_\varphi(\x) \in \Lang$ such that $T\models (\forall \x)\varphi(\x) \leftrightarrow R_\varphi(\x)$. 
\end{definition}

We now introduce some important notions involving quantifier free types.

\begin{definition}
A \defn{partial quantifier free $\Lang$-type on $(x_0, \dots, x_{n-1})$} is a collection of formula, $q$, such that whenever $\eta(x_{i_0}, \dots, x_{i_{k-1}}) \in q$ we have
\begin{itemize}
\item $\{x_{i_0}, \dots, x_{i_{k-1}}\} \subseteq \{x_0, \dots, x_{n-1}\}$, 

\item $\eta$ is either an atomic formula or the negation of an atomic formula, 

\item for all $0 \leq i < j <n$, $x_i \neq x_j \in q$,
 
\item There is an $\Lang$-structure $\M$ and a tuple $(a_0, \dots, a_{n-1}) \in \M$ such that $\M \models \bigwedge\{\eta(a_{i_0}, \dots, a_{i_{k-1}}) \st \eta(x_{i_0}, \dots, x_{i_{k-1}}) \in q\}$. 

\end{itemize}

We say a partial quantifier free type has \defn{arity} $n$ if it is on a set of variables of size $n$. We denote the arity of a partial quantifier free type $q$ by $\arity(q)$.  We say a partial quantifier free type is a \defn{(complete) quantifier free type} if it is maximal under inclusion. 

\end{definition}

For $\M$ an $\Lang_\M$-structure and $\a \in \M$ we say $\a$ \defn{realizes} a (partial) quantifier free type $p(\x)$ if $\M \models \bigwedge_{\eta(\x) \in p(\x)} \eta(\a)$. We denote the collection of (complete) quantifier free types realized by elements of $\M$ by $\qftp(\M)$.

Throughout this paper we will be interested in constructing random structures in stages, first determining the structure of all singletons, then determining, based on the structure of the singletons, the structure of the pairs, etc. When doing this it is important that the complete structure of all $n$-tuples is determined before we determine the structure of the $(n+1)$-tuples. For this reason we will want to restrict our attention to the case where, whenever an $n$-ary relation holds of a tuple, the tuple has distinct elements (as otherwise the instance of the $n$-ary relation would be about a $k$-tuple of distinct elements for some $k < n$ and not about an $n$-tuple of distinct elements). To this end we define an important class of quantifier free types. 

\begin{definition}
Suppose $\eta(x_0, \dots, x_{n-1}) \in \Lwow(\Lang)$ is an atomic formula. We say $\eta(x_0, \dots, x_{n-1})$ is \defn{non-redundant} if for all $0 \leq i < j < n$ we have $x_i \neq x_j$. We say a partial quantifier free type is \defn{non-redundant} if every atomic formula in it, except perhaps those of the form $x_i = x_i$, are non-redundant. 

For $\x = (x_0, \dots, x_{n-1})$ distinct elements let $\nqftp_\Lang(\x)$ be the collection of non-redundant quantifier free types on $\x$ in $\Lang$. We will omit $\Lang$ when it is clear from context. 

Note $\nqftp_\Lang(\x)$ has a natural topology generated by clopen sets of the from $\{q \st \neg^{\ell} R(x_{i_0}, \dots, x_{i_{k-1}}) \in q\}$ where $R \in \Lang$, $\ell \in \{0, 1\}$ and $\{i_0, \dots, i_{k-1}\} \subseteq [n]$. With this topology $\nqftp_\Lang(\x)$ is homeomorphic to Cantor space. 

\end{definition}

\begin{definition}
Suppose we have a theory $T \in \Lwow(\Lang)$. We say a formula 
\[
\varphi(x_0, \dots, x_{n-1}) \in \Lwow(\Lang)
\]
is \defn{non-redundant} over $T$ if 
\[
T \models (\forall x_0, \dots, x_{n-1}) \left(\varphi(x_0, \dots, x_{n-1}) \rightarrow \bigwedge_{0 \leq i < j < n}  x_i \neq x_j\right).
\] 
We say $T$ has \defn{non-redundant quantifier free types} if each non-equality atomic formula with distinct variables is non-redundant over $T$. 

We say a structure $\M$ is \defn{non-redundant} if $\ScottSen{\M}$ has non-redundant quantifier free types.
\end{definition}
 
In particular if $T$ has non-redundant quantifier free types, then every quantifier free type realized in a model of $T$ is non-redundant. 

\begin{example}
Suppose $\M$ is a canonical structure. Then it is immediate from the definition of a canonical structure that $\ScottSen{\M}$ has non-redundant quantifier free types, and so $\M$ is non-redundant. 
\end{example}

Another important class of partial quantifier free types are those where the ordering of the variables in the formulas is consistent.

\begin{definition}
We say a partial quantifier free type $q$ on distinct elements $(x_0, \dots, x_{n-1})$ is \defn{ordered} if whenever $\eta(x_{i_0}, \dots, x_{i_{k-1}}) \in q$ and $0 \leq j^- < j^+ < k$ we have $i_{j^-} < i_{j^+}$. 

We define an \defn{ordered quantifier free type} to be a maximal ordered partial quantifier free type under inclusion (among the collection of ordered quantifier free types on the same variables). 

For $\x = (x_0, \dots, x_{n-1})$ let $\oqftp_\Lang(\x)$ be the collection of ordered quantifier free types in $\Lang$ on $\x$. Note $\oqftp_\Lang(\x)$ has a natural topology generated by clopen sets of the from $\{q \st \neg^{\ell} R(x_{i_0}, \dots, x_{i_{j-1}}) \in q\}$ where $R \in \Lang$,  $\ell \in \{0, 1\}$ and $0 \leq i_0 < \dots, < i_{j-1} < n$. This topology makes $\oqftp_\Lang(\x)$ homeomorphic to Cantor space. 

\end{definition}

An ordered quantifier free type need not itself be a (complete) quantifier free type as we can often find an extension which doesn't preserve the order of variables. Despite this we have chosen the current name as ``ordered partial quantifier free type'' does not convey the maximality condition which we need. 

The relationship between non-redundant and ordered quantifier free types is given by the following straightforward lemma. 
\begin{lemma}
\label{Non-redundant quantifier free types from ordered quantifier free types}
For any $q \in \nqftp_\Lang(\x)$ there is a unique collection $\<p_\tau\>_{\tau \in \Sym{\x}}$ such that 
\begin{itemize}
\item $p_\tau \in \oqftp_\Lang(\tau(\x))$,

\item $q = \bigcup_{\tau \in \Sym{\x}} p_\tau$.

\end{itemize}
\end{lemma}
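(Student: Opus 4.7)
The plan is to exhibit, for each $\tau \in \Sym{\x}$, the unique ordered qf type on $\tau(\x)$ contained in $q$, and then to check that the union of these recovers $q$. Fix $\tau \in \Sym{\x}$ and write $\tau(\x) = (y_0, \ldots, y_{n-1})$ with $y_j \defas \tau(x_j)$. I would define $p_\tau$ to consist of the disequalities $y_i \neq y_j$ for $i \neq j$ together with all formulas $\neg^\ell R(y_{j_0}, \ldots, y_{j_{k-1}})$ with $R \in \Lang$, $\ell \in \{0,1\}$, and $j_0 < \cdots < j_{k-1}$ such that $\neg^\ell R(\tau(x_{j_0}), \ldots, \tau(x_{j_{k-1}})) \in q$.

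The first step is to verify $p_\tau \in \oqftp_\Lang(\tau(\x))$. Consistency holds by simply reindexing (via $\tau$) any structure realizing $q$; orderedness holds by construction; and maximality (among ordered partial qf types on $\tau(\x)$) follows because, for every $R$ of arity $k$ and $j_0 < \cdots < j_{k-1}$, the tuple $\tau(x_{j_0}), \ldots, \tau(x_{j_{k-1}})$ consists of distinct variables from $\x$, so the maximal non-redundant qf type $q$ must decide $R$ on this tuple, and this decision is precisely what $p_\tau$ records.

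Next I would show $q = \bigcup_{\tau} p_\tau$. The inclusion $\supseteq$ is immediate. For $\subseteq$, take any $\eta = \neg^\ell R(x_{i_0}, \ldots, x_{i_{k-1}}) \in q$; non-redundancy of $q$ makes $i_0, \ldots, i_{k-1}$ distinct, so I can choose $\tau \in \Sym{\x}$ whose underlying permutation $\sigma$ (with $\tau(x_j) = x_{\sigma(j)}$) satisfies $\sigma^{-1}(i_0) < \cdots < \sigma^{-1}(i_{k-1})$. Such $\tau$ exists because only the $k$ positions occupied by $i_0, \ldots, i_{k-1}$ under $\sigma^{-1}$ are constrained, while the remaining $n-k$ positions can be filled arbitrarily. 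Setting $j_\ell \defas \sigma^{-1}(i_\ell)$ then places $\eta$ in $p_\tau$.

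For uniqueness, suppose $\<p'_\tau\>_{\tau \in \Sym{\x}}$ is any other such collection. Maximality of $p'_\tau$ as an ordered qf type on $\tau(\x)$ forces it to contain, for every $R$ and $j_0 < \cdots < j_{k-1}$, exactly one of $R(y_{j_0}, \ldots, y_{j_{k-1}})$ or its negation; since $p'_\tau \subseteq q$, the chosen one must match the one in $q$, so $p'_\tau = p_\tau$. There is no real obstacle here; the argument is combinatorial bookkeeping, and the only subtle point is recognizing that the maximality and non-redundancy of $q$ together give exactly what is needed: maximality decides every ordered tuple of distinct variables, and non-redundancy guarantees every formula of $q$ is captured by at least one $\tau$.
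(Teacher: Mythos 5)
Your proof is correct: the paper states this lemma without proof (calling it straightforward), and your argument --- taking $p_\tau$ to be exactly the formulas of $q$ whose variables appear in increasing position along $\tau(\x)$, with maximality of $q$ plus non-redundancy giving that each $p_\tau$ is a full ordered type and that every formula of $q$ lands in some $p_\tau$, and containment in $q$ plus maximality giving uniqueness --- is precisely the intended decomposition. One cosmetic point: to keep $p_\tau$ literally ordered you should include only the disequalities $y_i \neq y_j$ with $i<j$ in the enumeration $\tau(\x)$ rather than all $i \neq j$; the union over all $\tau \in \Sym{\x}$ still recovers every disequality of $q$, so nothing else changes.
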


The relationship between non-redundant and ordered quantifier free types plays an important role in translating between the distribution of an array of random variables which is $\Aut(\M)$-invariant and an $\Aut(\M)$-invariant measure on $\Str_{\Lang}(\M)$. 

In order to do this translation we want to consider arrays $\<f_p\>_{p \in \nqftp(\M)}$ where $f_p$ takes values in $\oqftp_\Lang(\x)$ where $|\x| = \arity(p)$ (which is itself a standard Borel space). In this way we will be able in \cref{Aut(M)-recipe} to use \cref{Non-redundant quantifier free types from ordered quantifier free types} to recover the (non-redundant) quantifier free type of a tuple $(n_0, \dots, n_{k-1})$ from the values of $\<f_{p_\tau}(\x)\>_{\tau \in \Sym{k}}$ when $\M \models p_\tau(\tau(n_0), \dots, \tau(n_{k-1}))$. We will then be able to recover an element of $\Str_\Lang(\M)$ from $\<f_p(\x)\>_{p \in \nqftp(\M)}$. 

We end this section recalling the notion of deduction in $\Lwow(\Lang)$ (see for example \cite{MR0424560} Sec. III.4). This will be important when discussing the theory of an ergodic invariant measure.

\begin{definition}
\label{Deduction rules in Lwow}
We say $\varphi \in \Lwow(\Lang)$ is a \defn{tautology} if for all $\Lang$-structures $\M$, we have $\M \models \varphi$. 

Suppose $T \subseteq \Lwow(\Lang)$ is a collection of sentences. We define the \defn{deductive closure} of $T$, $\dedcl(T)$, to be the smallest subset of $\Lwow(\Lang)$ such that 
\begin{itemize}
\item $T \subseteq \dedcl(T)$.

\item (Tautologies) $\dedcl(T)$ contains all tautologies. 

\item (Modus Ponens) If $\varphi \in \dedcl(T)$ and $(\varphi \rightarrow \psi) \in \dedcl(T)$ then $\psi \in \dedcl(T)$. 

\item (Generalization) If $(\forall v)(\varphi \rightarrow \psi(v)) \in T$ and $v$ is not free in $\varphi$, then $(\varphi \rightarrow (\forall v)\psi(v)) \in T$. 

\item (Conjunction) If $\bigwedge \Phi \in \Lwow(\Lang)$ and for all $\varphi \in \Phi$, $(\psi \rightarrow \varphi) \in \dedcl(T)$ then $(\psi \rightarrow \bigwedge \Phi) \in T$. 

\end{itemize}

We say $T$ is \defn{consistent} if $\dedcl(T) \neq \Lwow(\Lang)$. 

\end{definition}

While it is the case that if $T$ is countable and consistent it must have a model, this is not in general the case for uncountable $T$. However, if $A$ is a countable fragment then $T$ is an $A$ theory if and only if it is consistent and complete, i.e. for all sentences $\varphi \in A$ either $\varphi \in T$ or $\neg \varphi \in T$.

\subsection{Definable Expansions}
\label{Definable Expansion Subsection}
In this section we recall the notion of a definable expansion and show how they can be used to find $\Aut(\M)$-universal sentences with desired properties. This will then let us reduce the general problem of finding representations for $\Aut(\M)$-invariant measures on $\Str_{\Lang}(\M)$ to the problem of finding representations for such measures which concentrate on the collection of models which have our desired properties. 

\begin{definition}
Suppose $\Lang_0 \subseteq \Lang_1$, $T_0\in \Lwow(\Lang_0)$ and $T_1 \in \Lwow(\Lang_1)$. We say that $T_1$ is a \defn{definable expansion} of $T_0$ if 
\begin{itemize}
\item Every $\Lang_0$-structure $\M_0$ satisfying $T_0$ has a unique expansion to an $\Lang_1$-structure $\M_1$ satisfying $T_1$. 

\item $T_1 \models T_0$. 

\item For every formula $\varphi_1 \in \Lwow(\Lang_1)$ there is a formula $\varphi_0 \in \Lwow(\Lang_0)$ such that 
\[
T_1 \models (\forall \x)\ \varphi_1(\x) \leftrightarrow \varphi_0(\x).
\]

\end{itemize}
\end{definition}

So $T_1$ is a definable expansion of $T_0$ if all models of $T_1$ are also models of $T_0$, the restriction relation is a bijection, and further every formula in $\Lwow(\Lang_1)$ is equivalent (over $T_1$) to one in $\Lwow(\Lang_0)$.  The following lemma is then straightforward.

\begin{lemma}
\label{Definable expansions give isomorphic sets}
Suppose 
\begin{itemize}
\item $\Lang_0 \subseteq \Lang_1$, 

\item $T_0\in \Lwow(\Lang_0)$ and $T_1 \in \Lwow(\Lang_1)$, 

\item $T_1$ is a definable expansion of $T_0$.
\end{itemize}

Then for any $T^* \in \Lwow(\Lang_0)$,  $\extent{T_0 \And T^*}_\M$ is isomorphic to $\extent{T_1 \And T^*}_\M$ as $\Aut(\M)$-spaces. 
\end{lemma}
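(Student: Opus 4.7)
The plan is to exhibit the desired isomorphism as the map that assigns to each $\Lang_0$-structure its unique $\Lang_1$-expansion. Concretely, for $\N_0 \in \extent{T_0}_\M \subseteq \Str_{\Lang_0}(\M)$, the first clause in the definition of definable expansion supplies a unique $\N_1 \in \Str_{\Lang_1}(\M)$ such that $\N_1|_{\Lang_0} = \N_0$ and $\N_1 \models T_1$. Call this assignment $\Phi$. Restriction in the other direction gives an inverse $\Psi \: \extent{T_1}_\M \to \extent{T_0}_\M$, since $T_1 \models T_0$ implies that the $\Lang_0$-reduct of any model of $T_1$ satisfies $T_0$.

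First I would verify that $\Phi$ and $\Psi$ restrict correctly to the slices cut out by $T^*$. Since $T^* \in \Lwow(\Lang_0)$, the formula $T^*$ makes sense in both $\N_0$ and $\N_1$, and because $\N_1$ is merely an expansion of $\N_0$, $\N_0 \models T^*$ iff $\N_1 \models T^*$. Hence $\Phi$ carries $\extent{T_0 \And T^*}_\M$ into $\extent{T_1 \And T^*}_\M$ bijectively, with inverse $\Psi$.

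Next I would check Borel measurability. The subbasis of the topology on $\Str_{\Lang_1}(\M)$ consists of sets of the form $\extent{R(\n)}_\M$ for $R \in \Lang_1$ and tuples $\n$ from $\Nats$. The third clause of the definition of definable expansion gives, for each such $R(\x)$, a formula $\varphi_R(\x) \in \Lwow(\Lang_0)$ with $T_1 \models (\forall \x)\, R(\x) \leftrightarrow \varphi_R(\x)$. Therefore on $\extent{T_0}_\M$ the preimage under $\Phi$ of $\extent{R(\n)}_\M$ equals $\extent{\varphi_R(\n)}_\M$, which is Borel since $\Lwow(\Lang_0)$-extents are Borel subsets of $\Str_{\Lang_0}(\M)$. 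Thus $\Phi$ is Borel. The inverse $\Psi$ is manifestly continuous, as restricting to $\Lang_0$ pulls each subbasic set back to itself.

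Finally, $\Aut(\M)$-equivariance follows from the uniqueness of the expansion: for $g \in \Aut(\M)$, the structure $g(\Phi(\N_0))$ lies in $\Str_{\Lang_1}(\M)$, satisfies $T_1$ (because $T_1$ is $\Lang_0 \cup \Lang_1$-invariant under $g$ in the obvious sense once we observe $T_0 \models \ScottSen{\M}$ makes every relevant $g$ fix the $\Lang_\M$-part), and restricts to $g\N_0$; hence by uniqueness $g\Phi(\N_0) = \Phi(g\N_0)$. The one step requiring the most care is the Borel-measurability of $\Phi$, and its verification is precisely where the hypothesis that every $\Lwow(\Lang_1)$-formula is $T_1$-equivalent to an $\Lwow(\Lang_0)$-formula is used.
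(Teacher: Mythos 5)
Your proof is correct. The paper states this lemma without proof, calling it ``straightforward''; the argument you supply --- the expansion map $\Phi$ with the reduct map $\Psi$ as its inverse, Borel measurability of $\Phi$ via the third clause of the definition of definable expansion (pulling $\extent{R(\n)}_\M$ back to $\extent{\varphi_R(\n)}_\M$), continuity of $\Psi$, and $\Aut(\M)$-equivariance from uniqueness of the expansion combined with $\Aut(\M)$-invariance of extents of sentences --- is evidently the intended one.
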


\cref{Definable expansions give isomorphic sets} tells us that if $T_1$ is a definable expansion of $T_0$ then when considered as $\Aut(\M)$-spaces, $\extent{T_0}_\M$ is isomorphic to $\extent{T_1}_{\M}$. We will in particular be interested in the case when two theories have a common definable expansion. 

\begin{definition}
We say theories $T_0 \in \Lwow(\Lang_0)$ and $T_1 \in \Lwow(\Lang_1)$ are \defn{interdefinable} when there is a language $\Lang_2 \supseteq \Lang_0 \cup \Lang_1$ and a theory $T_2$ which is a definable expansion of both $T_0$ and $T_1$.
\end{definition}

Two theories are interdefinable when it is possible to define each from the other (possibly in some larger language).

\begin{example}

Suppose $\M$ is an $\Lang$-structure. Then $\ScottSen{\M}$ and $\ScottSen{\CanStr{\Aut(\M)}}$ are interdefinable. 

\end{example}

Another important class of examples of interdefinable structuress are those obtained by simply relabeling the relations. 

\begin{definition}
A \defn{map of languages} between $\Lang_0$ and $\Lang_1$ is a function $i\: \Lang_0 \to \Lang_1$ such that for any relation $R\in \Lang_0$, $\arity(R) = \arity(i(R))$. A \defn{relabeling} of a language $\Lang_0$ by $\Lang_1$ is a bijective map of languages. 
\end{definition}

Note any relabeling extends to a bijection $i\: \Lwow(\Lang_0) \to \Lwow(\Lang_1)$. 

\begin{example}
If $i\:\Lang_0 \to \Lang_1$ is a relabeling then the empty theory in $\Lang_0$ is interdefinable with the empty theory in $\Lang_1$. 

\end{example}

There is an important example of a theory which is interdefinable with the empty theory in a language. 

For a language $\Lang$ let 
\[
\NonRedTh{\Lang} \defas \bigwedge_{\begin{subarray}{c}R \in \Lang \\ \arity(R) = k\end{subarray}} (\forall x_0, \dots, x_{k-1}) \left(R(x_0, \dots, x_{k-1}) \rightarrow \bigwedge_{0 \leq i < j < k}  x_i \neq x_j\right).
\]

For a language $\Lang$ let $\Lang_{\mathfrak{nr}} \defas \{R_{P, \equiv}\st P \in \Lang,$ of arity $n$,  $\equiv$ is an equivalence relation on $[n]$ with $\arity(R_{P, \equiv})$ many equivalence class$\}$ (here $\mathfrak{nr}$ stands for \qu{non-redundant}).

Let $\Th_{\Lang}^{\mathfrak{nr}}$ be the conjunction of all sentences of the form
\begin{align*}
(\forall x_0, \dots, x_{n-1})\  R_{P, \equiv}(x_{i_0}, \dots, x_{i_{k-1}}) &\leftrightarrow \bigg[P(x_0, \dots, x_{n-1}) \And \bigwedge_{0 \leq j < \ell < k} x_{i_j} \neq x_{i_\ell} \\
& \And \bigwedge_{\begin{subarray}{c}0 \leq j < \ell < n\\ j \equiv \ell\end{subarray}} x_{j} = x_{\ell} 
\And \bigwedge_{\begin{subarray}{c}0 \leq j < \ell < n\\ \neg(j \equiv \ell)\end{subarray}} x_{j} \neq x_{\ell}
\bigg]
\end{align*}

This theory replaces every relation with a sequence of non-redundant relations, one for every way in which arguments could be duplicate in the original relation.  

The following proposition is immediate from the definitions of $\NonRedTh{\Lang_{\mathfrak{nr}}}$ and $\Th_{\Lang}^{\mathfrak{nr}}$. 
\begin{proposition}
\label{non-redundant definable expansion}
If $\Lang$ is a language then 
\begin{itemize}
\item[(a)] $\w \cdot |\Lang_{\mathfrak{nr}}| = \w \cdot |\Lang|$. 

\item[(b)] $\Th_{\Lang}^{\mathfrak{nr}} \And \NonRedTh{\Lang_{\mathfrak{nr}}}$ is a definable expansion of the empty theory in $\Lang$ and $\NonRedTh{\Lang_{\mathfrak{nr}}}$ in $\Lang_{\mathfrak{nr}}$. 

\item[(c)] $\NonRedTh{\Lang_{\mathfrak{nr}}}$ has non-redundant quantifier free types. 
\end{itemize}
\end{proposition}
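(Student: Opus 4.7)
The plan is to handle the three items separately. Part (a) is a cardinality count: each $P \in \Lang$ of arity $n$ contributes at most $B(n) < \w$ many new relations $R_{P,\equiv}$, one per equivalence relation on $n$, so $|\Lang_{\mathfrak{nr}}| \leq \w \cdot |\Lang|$ gives $\w \cdot |\Lang_{\mathfrak{nr}}| \leq \w \cdot |\Lang|$, while the injection $P \mapsto R_{P,\equiv_{\triv}}$ (sending $P$ to the relation indexed by the trivial, identity equivalence) yields the reverse inequality. Part (c) is immediate from the definitions: $\NonRedTh{\Lang_{\mathfrak{nr}}}$ directly asserts that every relation of $\Lang_{\mathfrak{nr}}$ holds only on tuples of distinct elements, which is exactly the statement that every realized quantifier free type is non-redundant.

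For (b), reading the statement as asserting that $T^* := \Th_{\Lang}^{\mathfrak{nr}} \wedge \NonRedTh{\Lang_{\mathfrak{nr}}}$ in the language $\Lang \cup \Lang_{\mathfrak{nr}}$ is a definable expansion both of the empty theory in $\Lang$ and of $\NonRedTh{\Lang_{\mathfrak{nr}}}$ in $\Lang_{\mathfrak{nr}}$, I would first verify the existence and uniqueness of the expansion in each direction. Starting from a pure $\Lang$-structure $\M_0$, for each $P \in \Lang$ of arity $n$ and equivalence $\equiv$ on $n$ with $k$ classes, the biconditional of $\Th_{\Lang}^{\mathfrak{nr}}$ forces $R_{P,\equiv}$ to hold on exactly those distinct $k$-tuples $(a_0,\ldots,a_{k-1})$ whose blow-up $(b_0,\ldots,b_{n-1})$ along $\equiv$ (setting $b_j := a_{[j]_\equiv}$) satisfies $P$ in $\M_0$; this assignment is uniquely determined and satisfies $\NonRedTh{\Lang_{\mathfrak{nr}}}$ by construction. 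Conversely, starting from a non-redundant $\Lang_{\mathfrak{nr}}$-structure $\M_1$, I would define each $P \in \Lang$ by case analysis on the equality pattern of its arguments: for $(a_0,\ldots,a_{n-1})$ let $\equiv$ be the equivalence with $i \equiv j$ iff $a_i = a_j$, enumerate its $k$ classes as $(b_0,\ldots,b_{k-1})$ in the order of first occurrence of representatives, and set $P(a_0,\ldots,a_{n-1})$ to hold iff $R_{P,\equiv}(b_0,\ldots,b_{k-1})$ holds; once more the biconditionals single this out as the unique consistent expansion.

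For the formula-translation clause of the definable-expansion definition, each $R_{P,\equiv}$ is by construction $\Lwow(\Lang)$-definable from its defining biconditional, while each $P \in \Lang$ is expressible as a finite $\Lwow(\Lang_{\mathfrak{nr}})$-disjunction indexed by the equality patterns of its arguments. An easy induction on formula complexity then translates every sentence of $\Lwow(\Lang \cup \Lang_{\mathfrak{nr}})$ into an equivalent sentence of $\Lwow(\Lang)$ and of $\Lwow(\Lang_{\mathfrak{nr}})$, respectively. There is no real conceptual obstacle; the only place requiring care is the bookkeeping when moving between an $n$-tuple with a prescribed equality pattern and the corresponding $k$-tuple of distinct representatives, so I would fix the blow-up notation once at the start in order to keep the two directions of the translation symmetric.
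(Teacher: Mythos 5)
Your proof is correct, and it fills in exactly the routine verifications that the paper leaves implicit (the paper's entire justification is that the proposition is ``immediate from the definitions of $\NonRedTh{\Lang_{\mathfrak{nr}}}$ and $\Th_{\Lang}^{\mathfrak{nr}}$''). Your cardinality count via Bell numbers and the injection $P \mapsto R_{P,\equiv_{\mathrm{triv}}}$, the two unique-expansion arguments keyed to the biconditionals of $\Th_{\Lang}^{\mathfrak{nr}}$, and the observation that $\NonRedTh{\Lang_{\mathfrak{nr}}}$ literally restates the defining property of non-redundant quantifier free types are all the natural unpacking of the definitions, so this is the same approach.
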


\cref{non-redundant definable expansion} (b) says that $\NonRedTh{\Lang_{\mathfrak{nr}}}$ is interdefinable with the empty theory in $\Lang$. And, as $\Lang$ has unbounded arity if and only if $\Lang_{\mathfrak{nr}}$ does, $\NonRedTh{\Lang_{\mathfrak{nr}}}$ is $\Aut(\M)$-universal precisely when $\Str_\Lang(\M)$ is a universal $\Aut(\M)$-space. We will prefer in most circumstances to work with $\extent{\NonRedTh{\Lang_{\mathfrak{nr}}}}_\M$ rather than with $\Str_\Lang(\M)$ and as such we define $\StrNR_\Lang(\M) \defas \extent{\NonRedTh{\Lang_{\mathfrak{nr}}}}_\M$, and $\StrNR_\Lang \defas \extent{\NonRedTh{\Lang_{\mathfrak{nr}}}}_{\M_\emptyset}$ where $\M_\emptyset$ is the unique structure on $\Nats$ in the empty language.

We now give a definable expansion which will gives us Morleyizations for a fragment. 

Given a countable fragment $A$ we let $\Lang_A \defas \{R_{\varphi(\x)}(\x) \st \varphi(\x) \in A\}$.  We define the sentence $\Th_A^{qe} \in \Lwow(\Lang_A)$ to be the conjunction of the following:
\begin{itemize}

\item If $\varphi(\x) = \neg \psi(\x)$ then $(\forall \x)R_{\varphi(\x)}(\x) \leftrightarrow \neg R_{\psi(\x)}(\x)$. 

\item If $\varphi(\x) = \bigwedge_{i\in I}\psi_i(\x)$ then $(\forall \x)R_{\varphi(\x)}(\x) \leftrightarrow \bigwedge_{i \in I}R_{\psi_i(\x)}(\x)$. 

\item If $\varphi(\x) = \bigvee_{i\in I}\psi_i(\x)$ then $(\forall \x)R_{\varphi(\x)}(\x) \leftrightarrow \bigvee_{i \in I}R_{\psi_i(\x)}(\x)$. 

\item If $\varphi(\x) = (\exists y)\psi(\x, y)$ then $(\forall \x)R_{\varphi(\x)}(\x) \leftrightarrow (\exists y)R_{\psi(\x, y)}(\x, y)$. 

\item If $\varphi(\x) = (\forall y)\psi(\x, y)$ then $(\forall \x)R_{\varphi(\x)}(\x) \leftrightarrow (\forall y)R_{\psi(\x, y)}(\x, y)$. 

\end{itemize}

Let $\Th_A^* \defas \bigwedge_{P \in \Lang}(\forall \x)P(\x) \leftrightarrow R_{P(\x)}(\x)$.  We call $\Th_A^{qe}\And \Th_A^*$ the \defn{Morleyization} of $A$. 

The following is immediate. 
\begin{proposition}
\label{Morleyization}
If $\Lang$ is a language and $A$ is a countable fragment of $\Lwow(\Lang)$ then 
\begin{itemize}
\item[(a)] $|\Lang_A| = \w$. 

\item[(b)] $\Th_A^{qe} \And \Th_A^*$ is a definable expansion of the empty theory in $\Lang$ and $\Th_A^{qe}$ in $\Lang_A$. Hence $\Th_A$ is interdefinable with the empty theory in $\Lang$, and if $\Lang$ has unbounded arity $\Th_A$ is $\Aut(\M)$-universal. 

\item[(c)] $\Th_A^{qe}\And \Th_A^*$ has Morleyization for $A$.

\end{itemize}

\end{proposition}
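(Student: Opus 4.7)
The three parts are linked by a single induction on formula complexity; I would prove them in the order (a), (c), (b). Part (a) is immediate: the map $\varphi \mapsto R_{\varphi(\x)}$ is a bijection from $A$ onto $\Lang_A$, so $|\Lang_A| = |A| = \w$.

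For (c) I would argue by induction on the complexity of $\varphi(\x) \in A$ that
\[
\Th_A^{qe} \And \Th_A^* \models (\forall \x)\ \varphi(\x) \leftrightarrow R_{\varphi(\x)}(\x).
\]
The base case ($\varphi$ atomic) is exactly $\Th_A^*$. The five inductive steps (negation, conjunction, disjunction, existential, universal quantification) each correspond to one clause in the definition of $\Th_A^{qe}$ and use the inductive hypothesis on sub-formulas. That the induction closes relies crucially on $A$ being a fragment, so that every sub-formula of $\varphi$ lies in $A$ and the relevant $R$-symbols are present in $\Lang_A$. I address (c) before (b) because the translation step in (b) invokes this equivalence.

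For (b) I would verify the two definable-expansion conditions. Given an $\Lang$-structure $\M_0$, define its $\Lang \cup \Lang_A$-expansion $\M_1$ by $\M_1 \models R_{\varphi(\x)}(\a)$ iff $\M_0 \models \varphi(\a)$. Then $\M_1 \models \Th_A^*$ is immediate and $\M_1 \models \Th_A^{qe}$ follows by the same five-clause check as in (c). Uniqueness of the expansion is forced by $\Th_A^*$ (pinning down each $R_{P(\x)}$ from $P$) and then by induction on $\varphi$ using the clauses of $\Th_A^{qe}$. To translate an arbitrary $\Lwow(\Lang \cup \Lang_A)$-formula into $\Lwow(\Lang)$, I would substitute each atom $R_{\varphi(\y)}(\y)$ by $\varphi(\y)$; semantic equivalence over the Morleyization is exactly (c), and well-definedness uses that $A$ is closed under sub-formulas and variable renaming. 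The symmetric statement (that the Morleyization is a definable expansion of $\Th_A^{qe}$ in $\Lang_A$) is obtained by defining each $P \in \Lang$ via $R_{P(\x)}$ and substituting $P(\y) \mapsto R_{P(\y)}(\y)$. Interdefinability follows by definition, and $\Aut(\M)$-universality when $\Lang$ has unbounded arity is the combination of \cref{Definable expansions give isomorphic sets} (giving an $\Aut(\M)$-space isomorphism $\extent{\Th_A^{qe}}_\M \cong \Str_\Lang(\M)$) with \cref{Str_L(M) is universal for Aut(M) actions}. No essential obstacle arises; the main organizational point is sequencing (c) before the translation step in (b) so that the inductive equivalence is available when needed.
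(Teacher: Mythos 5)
The paper offers no proof of this proposition (it is labeled ``immediate''), so there is no argument to compare against; your proposal correctly supplies the standard Morleyization argument, and the decision to prove (c) before (b) so that the inductive equivalence $\Th_A^{qe} \And \Th_A^* \models (\forall \x)\,\varphi(\x) \leftrightarrow R_{\varphi(\x)}(\x)$ is available for both existence/uniqueness of the expansion and the formula-translation step is exactly the right organization. One small point worth making explicit: for the $\Aut(\M)$-universality claim you also need that $\Lang_A$ inherits unbounded arity from $\Lang$, which holds because $\arity(R_{P(\x)}) = \arity(P)$ for each $P \in \Lang$, so the hypotheses of \cref{Str_L(M) is universal for Aut(M) actions} are met on the $\Lang_A$ side.
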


It is worth noting that in general $\Th_A$ will not have non-redundant quantifier free types. However, if we wish to obtain a universal $\Aut(\M)$-theory which both is Morleyized for $A$ and has non-redundant quantifier free types, we can first apply the above to get the theory $\Th_A$ and then apply the transformation to get an interdefinable non-redundant theory. This will result in a universal $\Aut(\M)$-theory which is Morleyized for $A$ and which also has non-redundant quantifier free types.

\subsection{Invariant Measures}
\label{Invariant Measures Subsection}

We now introduce the main objects of study in this paper, $\Aut(\M)$-invariant probability measures. In this subsection $G$ will be a Polish group and $(\circ, X)$ will be a $G$-space. 

\begin{definition}
\label{G-invariant definition}
Suppose $\mu$ is a measure on $X$. We say $\mu$ is \defn{$G$-invariant} if for all Borel sets $B \subseteq X$ and all $g \in G$
\[
\mu(B) = \mu(gB).
\]

\end{definition}

An important class of invariant measures are the ergodic ones. 

\begin{definition}
\label{Almost sure invariance and ergodic}
Suppose $\mu\in \ProbMeasures(X)$. We say a Borel subset $B \subseteq X$ is \defn{$\mu$-\as\ $G$-invariant} if for every $g \in G$, $\mu(B \SymDiff g^{-1}B) = 0$. 
We say $\mu$ is \defn{ergodic} if for every $\mu$-\as\ $G$-invariant Borel set $B$, either $\mu(B) = 0$ or $\mu(B) = 1$. 
\end{definition}

One of the reasons why ergodic $G$-invariant measures are important is that they the are also the extreme ones in the simplex of $G$-invariant measures.

The following is standard (see for example \cite{MR2161313} Lem.~A1.2).
\begin{lemma}
\label{Ergodic = extreme}
For a $G$-invariant measure $\mu$ on $X$ the following are equivalent
\begin{itemize}
\item $\mu$ is ergodic. 

\item $\mu$ is extreme, i.e.\ is not a non-trivial convex combination of $G$-invariant measures. 
\end{itemize}
\end{lemma}

The property of being extreme is important because of the following lemma.

\begin{lemma}[\cite{MR2161313} Thm.~A1.3]
\label{All measures are mixtures of extreme ones}
Every $G$-invariant measure is a mixture of extreme $G$-invariant measures. 
\end{lemma}

In particular this means that every $G$-invariant measure is a mixture of ergodic $G$-invariant measures. 

From the model theoretic point of view one of the most important consequences of ergodicity is that to each ergodic $\Aut(\M)$-invariant measure on $\Str_\Lang(\M)$ we can associate a complete consistent $\Lwow(\Lang_\M \cup \Lang)$-theory.

\begin{definition}
Suppose $\mu\in \ProbMeasures(\Str_\Lang(\M))$. Define the \defn{almost sure theory of $\mu$} to be
\[
\Th(\mu) \defas \{\tau \in \Lwow(\Lang_\M \cup \Lang)\st \mu(\extent{\tau}_\M) = 1\}.
\] 
\end{definition}

\begin{lemma}
\label{Th(mu) is consistent}
For any measure $\mu\in \ProbMeasures(\Str_\Lang(\M))$, $\Th(\mu)$ is consistent. 
\end{lemma}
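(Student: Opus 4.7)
The plan is to show that $\Th(\mu)$ is already deductively closed, i.e.\ $\dedcl(\Th(\mu)) = \Th(\mu)$, and then observe that an outright contradiction (for instance $(\exists x)(x \neq x)$) has empty extent and hence measure $0$, so it lies outside $\Th(\mu)$ and thus outside $\dedcl(\Th(\mu))$.

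The key observation is that the extent operator $\extent{\cdot}_\M$ commutes with the logical connectives of $\Lwow(\Lang_\M \cup \Lang)$: we have $\extent{\neg \varphi}_\M = \Str_\Lang(\M) \setminus \extent{\varphi}_\M$, $\extent{\bigwedge \Phi}_\M = \bigcap_{\varphi \in \Phi}\extent{\varphi}_\M$ for countable $\Phi$, and similarly for $\Or$; moreover, if $\varphi$ and $\psi$ have the same models then $\extent{\varphi}_\M = \extent{\psi}_\M$. Since $\mu$ is a probability measure, countable intersections of $\mu$-conull sets remain $\mu$-conull, and $\Str_\Lang(\M)$ itself is $\mu$-conull.

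I would then check each clause of \cref{Deduction rules in Lwow} in turn. For tautologies, $\extent{\varphi}_\M = \Str_\Lang(\M)$, so $\mu(\extent{\varphi}_\M) = 1$. For modus ponens, if $\varphi, (\varphi \to \psi) \in \Th(\mu)$, then $\extent{\varphi}_\M \cap \extent{\varphi \to \psi}_\M$ is a $\mu$-conull subset of $\extent{\psi}_\M$, so $\psi \in \Th(\mu)$. For generalization, the premise $(\forall v)(\varphi \to \psi(v))$ and the conclusion $\varphi \to (\forall v)\psi(v)$ are logically equivalent when $v$ is not free in $\varphi$, so their extents are identical and membership in $\Th(\mu)$ transfers. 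For (countable) conjunction, if each $(\psi \to \varphi) \in \Th(\mu)$ for $\varphi \in \Phi$, then
\[
\extent{\psi \to \textstyle\bigwedge \Phi}_\M = \bigcap_{\varphi \in \Phi} \extent{\psi \to \varphi}_\M,
\]
a countable intersection of $\mu$-conull sets, hence $\mu$-conull.

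Finally, let $\bot$ denote any sentence with no models (e.g.\ $(\exists x)(x \neq x)$). Then $\extent{\bot}_\M = \emptyset$, so $\mu(\extent{\bot}_\M) = 0 \neq 1$, which shows $\bot \notin \Th(\mu) = \dedcl(\Th(\mu))$, establishing consistency. There is no real obstacle here; the only minor point requiring attention is ensuring that the use of countable additivity in the conjunction clause is legitimate, which is guaranteed by the fact that $\Lwow(\Lang_\M \cup \Lang)$ permits only countable conjunctions.
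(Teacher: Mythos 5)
Your proposal is correct and follows the same route as the paper: the paper's proof also argues that $\sigma$-additivity forces $\Th(\mu) = \dedcl(\Th(\mu))$ and then notes $\mu(\extent{(\exists x)\ x \neq x}_\M) = \mu(\emptyset) = 0 \neq 1$, so $\Th(\mu)$ is consistent. You have simply spelled out the rule-by-rule verification of deductive closure that the paper leaves implicit.
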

\begin{Proof}
By $\sigma$-additivity of the measure $\mu$ we have $\Th(\mu)$ must be closed under the rules of deduction of $\Lwow(\Lang)$ in \cref{Deduction rules in Lwow}, i.e.\ we must have $\Th(\mu) = \dedcl(\Th(\mu))$. However $\mu(\extent{(\exists x)\ x \neq x}) =\mu(\emptyset) = 0 \neq 1$ and so $\Th(\mu)$ is consistent. 
\end{Proof}

For our purposes we are most interested in the theory of a measure when the measure is ergodic. 

\begin{lemma}
\label{Th(mu) is complete for ergodic mu}
If $\mu\in \ProbMeasures(\Str_\Lang(\M))$ is ergodic and $\Aut(\M)$-invariant then $\Th(\mu)$ is complete and consistent. 
\end{lemma}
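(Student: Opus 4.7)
The plan is to invoke the previous lemma for consistency and reduce completeness to ergodicity applied to extents of sentences. Consistency is immediate from \cref{Th(mu) is consistent}, so I would focus on completeness: I need to show that for every sentence $\tau \in \Lwow(\Lang_\M \cup \Lang)$, either $\tau \in \Th(\mu)$ or $\neg\tau \in \Th(\mu)$.

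First I would observe that for any sentence $\tau$, the extent $\extent{\tau}_\M \subseteq \Str_\Lang(\M)$ is a Borel $\Aut(\M)$-invariant set. Borelness follows from the standard fact that the extent of an $\Lwow$-formula is Borel, and $\Aut(\M)$-invariance follows because $\tau$ has no free variables, so for every $g \in \Aut(\M)$ and every $\N \in \Str_\Lang(\M)$, $\N \models \tau$ iff $g\N \models \tau$. In particular $\extent{\tau}_\M$ is $\mu$-\as\ $\Aut(\M)$-invariant since $g^{-1}\extent{\tau}_\M = \extent{\tau}_\M$ exactly.

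Next I would apply ergodicity: $\mu(\extent{\tau}_\M) \in \{0, 1\}$. If $\mu(\extent{\tau}_\M) = 1$ then $\tau \in \Th(\mu)$ by definition. If $\mu(\extent{\tau}_\M) = 0$, then since $\extent{\neg\tau}_\M = \Str_\Lang(\M) \setminus \extent{\tau}_\M$, we get $\mu(\extent{\neg\tau}_\M) = 1$, so $\neg\tau \in \Th(\mu)$. This gives completeness.

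There is no real obstacle here; the argument is essentially a one-line application of ergodicity once the observation about invariance of sentence extents is made. The only minor subtlety is verifying invariance of $\extent{\tau}_\M$ for $\Lwow$-sentences, which is routine by induction on formula complexity (or by noting that the action of $\Aut(\M)$ on $\Str_\Lang(\M)$ is by structure isomorphisms fixing $\M$, hence preserves satisfaction of $\Lang_\M \cup \Lang$-sentences).
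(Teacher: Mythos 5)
Your proof is correct and follows the same route as the paper: deduce consistency from the previous lemma, and deduce completeness by noting that $\extent{\tau}_\M$ is $\Aut(\M)$-invariant (hence a fortiori $\mu$-\as\ invariant) so that ergodicity forces $\mu(\extent{\tau}_\M) \in \{0,1\}$, placing one of $\tau$ or $\neg\tau$ in $\Th(\mu)$. You merely spell out the $0$-measure-to-complement step a bit more explicitly than the paper does.
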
  
\begin{Proof}
For any sentence $\tau \in \Lwow(\Lang)$ we have that $\extent{\tau}_\M$ and $\extent{\neg \tau}_\M$ are invariant and hence $\mu(\extent{\tau}_\M) \in \{0, 1\}$ and $\mu(\extent{\neg\tau}_\M) \in \{0, 1\}$. Therefore one of $\tau$ or $\neg \tau$ is in $\Th(\mu)$ and so $\Th(\mu)$ is complete. 

The consistency of $\Th(\mu)$ follows from \cref{Th(mu) is consistent}.
\end{Proof}

We will end this section with a simple but important criteria for when a function can be extended to an $\Aut(\M)$-invariant measure on $\Str_\Lang(\M)$. 

\begin{definition}
For any language $\Lang$ let $\qfpi(\Lang)$ be the collection of formulas which are finite conjunctions of atomic and negations of atomic formulas with parameters in $\Nats$. 
\end{definition}

\begin{lemma}
\label{Measures from pi system}
Suppose $\mu^-\: \qfpi(\Lang_\M \cup \Lang) \to [0,1]$ is such that 
\begin{itemize}

\item[(a)] For every $\zeta \in \qfpi(\Lang_\M)$, if $\M \models \eta$ then $\mu^-(\eta) = 1$. 

\item[(b)] For every $\zeta \in \qfpi(\Lang_\M \cup \Lang)$ and every atomic $\Lang_\M \cup \Lang$-formula $\eta$ with parameters from $\Nats$,  
\[
\mu^-(\zeta) = \mu^-(\zeta \And \eta) + \mu^-(\zeta \And \neg \eta).
\]
\end{itemize}
Then there is a unique probability measure $\mu$ on $\Str_\Lang(\M)$ such that $\mu(\extent{\eta}_\M) = \mu^-(\eta)$ for all $\eta \in \qfpi(\Lang_\M \cup \Lang)$.  

Further if $\mu^-(\zeta(\a)) = \mu^-(\zeta(\b))$ whenever $\a, \b$ are in the same $\Aut(\M)$-orbit then $\mu$ is $\Aut(\M)$-invariant. 
\end{lemma}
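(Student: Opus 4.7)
The plan is to apply Caratheodory's extension theorem. Since every $\N \in \Str_\Lang(\M)$ has $\Lang_\M$-reduct equal to $\M$, the space $\Str_\Lang(\M)$ is homeomorphic to $\prod_{R \in \Lang} 2^{\Nats^{\arity(R)}}$ and hence is a compact Polish space. The sets $\{\extent{\eta}_\M : \eta \in \qfpi(\Lang_\M \cup \Lang)\}$ form a $\pi$-system (since $\qfpi$ is closed under conjunction) generating the Borel $\sigma$-algebra. Let $\mathcal{A}$ denote the Boolean algebra they generate; by disjunctive normal form, every $A \in \mathcal{A}$ is a finite disjoint union $\bigsqcup_i \extent{\eta_i}_\M$ with each $\eta_i \in \qfpi(\Lang_\M \cup \Lang)$.

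First I would define $\mu(\bigsqcup_i \extent{\eta_i}_\M) \defas \sum_i \mu^-(\eta_i)$ and verify well-definedness. Any two such representations of the same clopen set admit a common refinement, obtained by adjoining the relevant atomic formulas (with fixed parameters from $\Nats$) to every conjunction; condition (b) shows each individual refinement step preserves the total $\mu^-$-value, so the two sums agree. Along the way, condition (a) combined with (b) forces $\mu^-(\eta) = 0$ whenever $\eta \in \qfpi(\Lang_\M)$ is inconsistent with $\M$: iteratively splitting $\mu^-$ of a tautology such as $0 = 0$ against the literals in the atomic diagram of $\M$ gives $\mu^-$ value zero to any $\Lang_\M$-literal contradicted by $\M$. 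This in particular guarantees that the prospective measure is actually supported on $\Str_\Lang(\M)$.

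Next, to upgrade finite additivity to countable additivity on $\mathcal{A}$, I would invoke the compactness of $\Str_\Lang(\M)$: any decreasing sequence of clopen sets in $\mathcal{A}$ with empty intersection must be eventually empty, since the sets are closed subsets of a compact space and otherwise the finite intersection property would yield a nonempty intersection. Caratheodory's extension theorem then yields a unique Borel probability measure $\mu$ on $\Str_\Lang(\M)$ extending the assignment $\mu(\extent{\eta}_\M) = \mu^-(\eta)$. For the invariance claim, note that $g\extent{\zeta(\a)}_\M = \extent{\zeta(g\a)}_\M$ for $g \in \Aut(\M)$, and the hypothesis gives $\mu^-(\zeta(g\a)) = \mu^-(\zeta(\a))$; hence $\mu$ and $g_* \mu$ agree on the generating $\pi$-system and so on the full Borel $\sigma$-algebra by the uniqueness clause of Caratheodory. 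The main obstacle is the well-definedness bookkeeping in the first step; once that is in hand, compactness and the standard extension machinery handle the remainder routinely.
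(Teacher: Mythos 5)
Your argument is correct and is exactly the approach the paper intends; the paper's proof is simply the one-line invocation of the Carath\'eodory extension theorem, and your write-up is a faithful (if more detailed) expansion of that: well-definedness via condition (b), support via (a), countable additivity via compactness of $\Str_\Lang(\M)$, and invariance via the uniqueness clause on the generating $\pi$-system.
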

\begin{Proof}
This follows immediately from the Carath\'{e}odory extension theorem.
\end{Proof}

Finally the following notion will be important.

\begin{definition}
Suppose $\Lang_0 \subseteq \Lang_1$ and $\mu \in \ProbMeasures(\Str_{\Lang_1}(\M))$. We let $\mu|_{\Lang_0}$ be the measure on $\Str_{\Lang_0}(\M)$ which agrees with $\mu$. Note $\mu|_{\Lang_0}$ is $\Aut(\M)$-invariant if $\mu$ is. 
\end{definition}

\subsection{Aldous-Hoover-Kallenberg}
\label{Aldous-Hoover-Kallenberg Subsection}

In this section we recall the Aldous-Hoover-Kallenberg theorem which gives a representation for $\Sym{\Nats}$-invariant measures. When $\M$ is free, a notion we will make precise in \cref{Free Structure}, we will be able to combine an $\Aut(\M)$-invariant measure $\mu$ with an explicit $\Sym{\Nats}$-invariant measure $\vartheta_\M$ concentrated on $\extent{\ScottSen{\M}}$ to get an $\Sym{\Nats}$-invariant measure. We will then use the Aldous-Hoover-Kallenberg theorem to get a representation of this combined measure from which we will be able to extract a representation of $\mu$. 

One of the definitive sources for the Aldous-Hoover-Kallenberg theorem and related facts is Chapter 7 of \cite{MR2161313} which we will use as a reference in this section. However there are two edge cases, \cref{Ergodic Sym(N)-Invariant Measures} and \cref{Ergodic Aldous-Hoover-Kallenberg Equivalence of Representations}, that we will need which, while not technically stated as results in \cite{MR2161313}, follow immediately with minor modifications of various proofs in \cite{MR2161313}. Our proofs of these facts will mention which results in \cite{MR2161313} are need to be modified and how, but will leave it to the enthusiastic reader to make the routine modifications.

The following theorem is (a variant of) what is often referred to as the Aldous-Hoover-Kallenberg theorem. See for example \cite{MR2161313} Ch.~7.5 or \cite{MR2426176}. Recall that a $U[0,1]$-array is a collection of uniform i.i.d.\ $[0,1]$-valued random variables.

\begin{theorem}[Aldous-Hoover-Kallenberg Theorem]
\label{Aldous-Hoover-Kallenberg Theorem}
Let $\X = (X_\a)_{\a \in \NatsIndex}$ be a  collection of random variables such that $X_\a$ takes values in a Polish space $S_{|\a|}$. Then the following are equivalent.
\begin{itemize}

\item For all $\tau \in \Sym{\Nats}$, $(X_\a)_{\a \in \NatsIndex} \eqd (X_{\tau(\a)})_{\a \in \NatsIndex}$, i.e.\ $(X_\a)_{\a \in \NatsIndex}$ is \defn{exchangeable}.

\item There exists a $U[0,1]$-array $(\zeta_\a)_{\a \in \Powerset_{< \w}(\Nats)}$ and a collection of measurable functions $f_n\:[0,1]^{\Powerset(n)} \to S_n$ such that 
\[
(X_\a)_{\a \in \NatsIndex} \eqd (f_{|\a|}(\widehat{\zeta}_\a))_{\a \in \NatsIndex}\ \as
\]
We call $\<f_n\>_{n \in \Nats}$ a \defn{representation} of $(X_\a)_{\a \in \NatsIndex}$

\end{itemize}

\end{theorem}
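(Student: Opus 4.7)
The plan is to treat the two directions separately. The easy direction --- that any array of the form $(f_{|\a|}(\hat{\zeta}_\a))_{\a \in \NatsIndex}$ is exchangeable --- is immediate: for any $\tau \in \Sym{\Nats}$ the relabeled family $(\zeta_{\tau(B)})_{B \in \Powerset_{<\w}(\Nats)}$ is again a $U[0,1]$-array by i.i.d.\ and measure-preservation, and since $f_n$ depends only on the arity $n$, the composition transports this invariance in distribution from the $\zeta$'s to the $X$'s.

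For the hard direction I would proceed by induction on $d$, constructing the variables $\zeta_B$ for $B \in \Powerset_{<\w}(\Nats)$ and the functions $f_n$ one dimension at a time, maintaining at stage $d$ the invariant that $X_\a = f_{|\a|}(\hat{\zeta}_\a)$ almost surely for every $|\a| < d$. The cases $d \leq 1$ recover de Finetti: from the ergodic decomposition of the shift-invariant distribution of $(X_n)_{n \in \Nats}$ one obtains a random probability measure $\nu$ on $S$, codes it as $\zeta_\emptyset \in [0,1]$ via a Borel isomorphism $\ProbMeasures(S) \to [0,1]$, draws $\zeta_{\{n\}}$ uniform, and writes $X_n = f_1(\zeta_\emptyset, \zeta_{\{n\}})$ by a measurable inverse-CDF construction parametrized by $\nu$. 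For the inductive step, suppose $(\zeta_B)_{|B| < d}$ and $(f_n)_{n < d}$ have been produced, and let $\G_{<d}$ denote the sigma-algebra they generate. Exchangeability propagates to the conditional law of the $d$-dimensional slice $(X_\a)_{|\a|=d}$ given $\G_{<d}$, and a conditional form of Hewitt--Savage yields a conditional product structure across distinct $d$-sets $\a$. Drawing fresh uniform variables $\zeta_\a$ for $|\a| = d$ independent of $\G_{<d}$ and applying a measurable coding of the conditional law then produces the desired $f_d$ with $X_\a = f_d(\hat{\zeta}_\a)$ almost surely.

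The main obstacle is precisely the conditional independence step: after conditioning on $\G_{<d}$, one needs each $X_\a$ with $|\a| = d$ to be conditionally independent of $(X_{\a'})_{\a' \neq \a,\ |\a'| = d}$, with its conditional law depending measurably only on $\hat{\zeta}_\a$. This is the substantive content of the theorem and is the step I expect to be hardest to justify rigorously. The standard approach is a reverse-martingale and tail-sigma-algebra argument: fix $\a$ with $|\a|=d$ and exploit invariance under the pointwise stabilizer of $\a$ in $\Sym{\Nats}$ to force the conditional law of $X_\a$ (given everything else at levels $\leq d$) to be measurable with respect to a tail sigma-algebra that, by Hewitt--Savage applied fiberwise, collapses onto data indexed by subsets of $\a$. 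Once this is in hand, the existence of the measurable coding $f_d\: [0,1]^{\Powerset(d)} \to S$ is a routine consequence of the existence of regular conditional distributions on the standard Borel space $S$.
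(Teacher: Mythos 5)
The paper does not prove this theorem: it is stated as a known result, with the reader referred to \cite{MR2161313} Ch.~7.5 and \cite{MR2426176}, so there is no in-paper proof to compare your sketch against.

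Your sketch correctly handles the easy direction and correctly identifies where the difficulty lies, but the hard direction has a genuine gap at precisely the spot you flag. Asserting that ``a conditional form of Hewitt--Savage yields a conditional product structure across distinct $d$-sets'' is, in effect, asserting the theorem. For a general exchangeable array it is \emph{not} true that the level-$d$ entries become conditionally independent once one conditions on a sigma-algebra $\G_{<d}$ generated by previously constructed lower-level uniforms: that conditional independence holds only if those uniforms were chosen so as to already encode all of the ``shared'' stochastic structure between overlapping $d$-sets, and proving that such a choice is possible is the substance of Aldous's and Hoover's work. The reverse-martingale and tail-sigma-algebra argument you appeal to produces a 0--1 dichotomy on the right invariant sigma-algebras but does not, on its own, give the stronger statement that the conditional law of $X_\a$ given everything else at levels $\leq d$ factors measurably through $\hat{\zeta}_\a$; bridging that gap requires a careful coding or coupling argument (Aldous's original coding argument, Kallenberg's coupling proof, or an ultraproduct/graphon-style argument). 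There is also a secondary technical issue you pass over: arranging $X_\a = f_{|\a|}(\hat{\zeta}_\a)$ \emph{almost surely} at every level simultaneously, while keeping the $\zeta_B$ a $U[0,1]$-array, requires working on a suitably enlarged probability space and is not automatic from an inductive equality-in-distribution argument. As an honest outline of where the work lies your proposal is reasonable, but as written it presupposes the key conditional-independence step rather than proving it.
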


Often in the Aldous-Hoover-Kallenberg theorem as stated it is assumed that the array takes values in a single $S$, possibly $[0,1]$. However, as all standard Borel spaces of the same cardinality are isomorphic, \cref{Aldous-Hoover-Kallenberg Theorem} is no more general. By allowing $X_\a$ to take values in $\oqftp(\Lang^{|\a|})$ for some language $\Lang$, we can assume the array $(X_{\a})_{\a \in \NatsIndex}$ collectively takes values in $\Str_\Lang$. This then gives us the following equivalent formulation of \cref{Aldous-Hoover-Kallenberg Theorem}. Note the term \emph{recipe} is due to Austin (see \cite{MR2426176}).

\begin{definition}
\label{Sym(N)-recipe}
Suppose $\Lang$ is a language where all relations have arity less than $N \leq \w$. Define an \defn{$\Sym{\Nats}$-recipe} to consist of a collection of measurable functions $\f \defas \<f_n\>_{n < N}$ where 
\[
f_n\:[0,1]^{\Powerset(n)} \to \oqftp_{\Lang^n}(x_0, \dots, x_{n-1})
\] for $n < N$. 

We let $\RandMod{\f}\:[0,1]^{\Powerset_{<N}(\Nats)} \to \StrNR_\Lang$ be the function such that 
\begin{itemize}
\item for all relation $R \in \Lang$ of arity $k$, 

\item for all $\y = \<y_\b\>_{\b \in \Powerset_{<N}(\Nats)} \in [0,1]^{\Powerset_{<N}(\Nats)}$, and

\item for all $\a = (a_0, \dots, a_{k-1}) \in \Nats^{[<N]}$ with $k < N$
\end{itemize}
\[
\RandMod{\f}(\y) \models R(a_0, \dots, a_{k-1}) \text{ if and only if }R(x_0, \dots, x_{k-1}) \in f_{|\a|}(\widehat{y}_\a).
\]
We define the \defn{distribution} of $\f$ to be the distribution of $\RandMod{\f}$ (where the domain of $\RandMod{\f}$ is given the Lebesgue measure). 
\end{definition}

\begin{proposition}
Suppose $\mu$ is a measure on $\StrNR_{\Lang}$. Then the following are equivalent.
\begin{itemize}
\item $\mu$ is $\Sym{\Nats}$-invariant.

\item There exists an $\Sym{\Nats}$-recipe $\f$ whose distribution is $\mu$. 

\end{itemize}
\end{proposition}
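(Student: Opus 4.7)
The plan is to view this as a straightforward translation of \cref{Aldous-Hoover-Kallenberg Theorem} into the language of $\Sym{\Nats}$-recipes, using that knowing the ordered quantifier-free type of every tuple in a structure is equivalent to knowing the structure itself. I will handle the two directions separately, with essentially all the work in the forward implication.

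For the forward direction, assume $\mu$ is $\Sym{\Nats}$-invariant and let $\N \sim \mu$. For each $\a = (a_0, \dots, a_{k-1}) \in \NatsIndex$, define $X_\a$ to be the ordered quantifier-free type of $\a$ in $\N$, i.e.\ the set of formulas $\neg^\ell R(x_{i_0}, \dots, x_{i_{j-1}})$ with $i_0 < \cdots < i_{j-1} < k$ satisfied in $\N$ under $x_i \mapsto a_i$. Then $X_\a$ takes values in $\oqftp_\Lang(x_0, \dots, x_{k-1})$, a standard Borel space depending only on $|\a|$. To establish exchangeability, fix $\tau \in \Sym{\Nats}$ and observe that by definition of the action $\circ_\M$, the ordered quantifier-free type of $\a$ in $\tau^{-1}\N$ coincides with the ordered quantifier-free type of $\tau(\a)$ in $\N$. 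Since $\tau^{-1}\N \eqd \N$ by $\Sym{\Nats}$-invariance of $\mu$, we obtain $(X_\a)_{\a \in \NatsIndex} \eqd (X_{\tau(\a)})_{\a \in \NatsIndex}$. Now apply the variant of \cref{Aldous-Hoover-Kallenberg Theorem} noted in the paragraph immediately following it (with $S_n = \oqftp_\Lang(x_0, \dots, x_{n-1})$): there exist measurable $f_n\:[0,1]^{\Powerset(n)} \to \oqftp_\Lang(x_0,\dots,x_{n-1})$ and a $U[0,1]$-array $(\zeta_\b)_{\b \in \Powerset_{<\w}(\Nats)}$ such that $(X_\a)_{\a \in \NatsIndex} \eqd (f_{|\a|}(\hat{\zeta}_\a))_{\a \in \NatsIndex}$.

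Setting $\f = \<f_n\>_{n \in \Nats}$, it remains to identify the distribution of $\RandMod{\f}$ with $\mu$. Observe that a structure in $\Str_\Lang$ is determined by, and determines, its array of ordered quantifier-free types on tuples in $\NatsIndex$: by \cref{Non-redundant quantifier free types from ordered quantifier free types} together with the fact that $\N \models R(a_0, \dots, a_{k-1})$ is directly readable from the ordered type of any permutation of $(a_0,\dots,a_{k-1})$. The map $\Theta$ sending a $\NatsIndex$-array of ordered types to the corresponding structure is Borel, and by construction both $\N$ and $\RandMod{\f}((\zeta_\b)_\b)$ are the image under $\Theta$ of the two equidistributed arrays above. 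Hence $\mu \eqd \RandMod{\f}_\#(\lambda)$.

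For the reverse direction, assume $\mu$ is the distribution of $\RandMod{\f}$. Given $\tau \in \Sym{\Nats}$, define the permutation $\widetilde{\tau}$ of $[0,1]^{\Powerset_{<\w}(\Nats)}$ by $(\widetilde{\tau}\y)_B = y_{\tau^{-1}(B)}$. This is measure-preserving for the product Lebesgue measure since it only permutes coordinates. A direct unwinding of \cref{Sym(N)-reciple} shows that for every $\y$, $\RandMod{\f}(\widetilde{\tau}\y) = \tau\,\RandMod{\f}(\y)$ in $\Str_\Lang$: indeed, for $\a \in \NatsIndex$ of length $k$, membership of $R(x_0,\dots,x_{k-1})$ in $f_k(\widehat{(\widetilde{\tau}\y)}_\a) = f_k(\hat{y}_{\tau^{-1}\a})$ corresponds exactly, via the definition of $\circ_\M$, to $\tau\,\RandMod{\f}(\y) \models R(\a)$. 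Pushing forward Lebesgue measure by both sides now yields $\tau_\# \mu = \mu$, as required.

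The main obstacle is just bookkeeping: keeping straight the distinction between ordered tuples $\a \in \NatsIndex$ and unordered index sets $B \in \Powerset_{<\w}(\Nats)$, and correctly using \cref{Non-redundant quantifier free types from ordered quantifier free types} to pass between ordered quantifier-free types of all reorderings of a tuple and the non-redundant quantifier-free type that determines the structure. Once this identification is made explicit, the proposition is essentially a restatement of \cref{Aldous-Hoover-Kallenberg Theorem}.
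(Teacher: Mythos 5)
Your proof is correct and takes essentially the same approach as the paper's, which is a three-sentence application of \cref{Aldous-Hoover-Kallenberg Theorem} after identifying $\mu$ with the law of the exchangeable array of ordered quantifier-free types of a $\mu$-distributed structure. You have simply made explicit the details the paper leaves to the reader: the exchangeability check via the group action, the Borel correspondence between a structure in $\Str_\Lang$ and its array of ordered types, and the reverse direction (invariance of the distribution of $\RandMod{\f}$).
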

\begin{Proof}
This follows immediately from  \cref{Aldous-Hoover-Kallenberg Theorem} by letting the space $S_n$ be $\oqftp_{\Lang^n}(x_0, \dots, x_{n-1})$. 
\end{Proof}

Also often considered part of the Aldous-Hoover-Kallenberg theorem is a characterization of when two exchangeable arrays have the same distribution. Before we can make this precise we need a definition. 

\begin{definition}
Suppose $g\: [0,1]^{\Powerset(n)} \to [0,1]$. We say $g$ \defn{preserves $\lambda$ in the highest order arguments} if for each $\<x_{\a}\>_{\a \in \Powerset(n) \setminus \{\{0, \dots, n-1\}\}}$ the map $x_{\{0, \dots, n-1\}} \to g(\<x_\a\>_{\a \in \Powerset(n)})$ preserves $\lambda$. 

Similarly, suppose $h\: [0,1]^{\Powerset(n)} \times [0,1]^{\Powerset(n)} \to [0,1]$. We say $h$ \defn{maps $\lambda^2$ to $\lambda$ in the highest order arguments} if for each $\<x_{\a}, y_\a\>_{\a \in \Powerset(n) \setminus \{\{0, \dots, n-1\}\}}$ the map $(x_{\{0, \dots, n-1\}}, y_{\{0, \dots, n-1\}}) \to h(\<x_\a\>_{\a \in \Powerset(n)}, \<y_\a\>_{\a \in \Powerset(n)})$ maps $\lambda^2$ to $\lambda$. 

\end{definition}

\begin{theorem}[\cite{MR2161313} Thm.~7.28]
\label{Aldous-Hoover-Kallenberg Equivalence of Representations}
Suppose $N \leq \w$, $\f^0 = \<f_n^0\>_{n < N}$ and $\f^1 = \<f_n^1\>_{n < N}$ are $\Sym{\Nats}$-recipes and  $(\zeta_\a)_{\a \in \Powerset_{<N}(\Nats)}$, $(\eta_\a)_{\a \in \Powerset_{<N}(\Nats)}$ are $U[0,1]$-arrays. Then the following are equivalent.
\begin{itemize}
\item $(f^0_{|\a|}(\widehat{\zeta}_\a))_{\a\in \Nats^{[<N]}} \eqd (f^1_{|\a|}(\widehat{\zeta}_\a))_{\a\in \Nats^{[<N]}}$. 

\item For each $n < N$ there are functions $g^0_n, g^1_n\: [0,1]^{\Powerset(n)} \to [0,1]$ which preserve $\lambda$ in the highest order arguments and are such that 
\[
(f^0_{|\a|}(\widehat{G^0_{\a}}(\widehat{\zeta}_\a)))_{\a\in \Nats^{[<N]}} = (f^1_{|\a|}(\widehat{G^1_{\a}}(\widehat{\zeta}_\a)))_{\a\in \Nats^{[<N]}}\ \as
\]
where $(G^i_{\a})_{\a\in \Nats^{[<N]}} = (g^i_{|\a|})_{\a\in \Nats^{[<N]}}$. 

\item For each $n < N$ there is a function $h_n\: [0,1]^{\Powerset(n)} \times [0,1]^{\Powerset(n)} \to [0,1]$ which maps $\lambda^2$ to $\lambda$ in the highest order arguments and is such that 
\[	
(f^0_{|\a|}(\widehat{\zeta}_\a))_{\a\in \Nats^{[<N]}} = (f^1_{|\a|}(\widehat{H_{\a}}(\widehat{\zeta}_\a, \widehat{\eta}_\a)))_{\a\in \Nats^{[<N]}}\ \as
\]
where $(H_{\a})_{\a\in \Nats^{[<N]}} = (h_{|\a|})_{\a\in \Nats^{[<N]}}$. 
\end{itemize}

\end{theorem}

Along with the representation theorem for exchangeable arrays, there is also a characterization of when an exchangeable array has an ergodic distribution. This characterization is both in terms of the possible $\Sym{\Nats}$-recipes, as well as in terms of the property of being dissociated. 

\begin{definition}
Let $\X = (X_\a)_{\a \in \NatsIndex}$ be a collection of random variables. We say $\X$ is \defn{dissociated}, if for every disjoint pair of tuples $\a$ and $\b$ the random variables $\widehat{X}_\a$ and $\widehat{X}_\b$ are independent. We say a measure is dissociated if it is the distribution of a collection of dissociated random variables. 
\end{definition}

\begin{theorem}[\cite{MR2161313} Lem.~7.35]
\label{Ergodic Sym(N)-Invariant Measures for Bounded Arity}
Suppose $\mu$ is an $\Sym{\Nats}$-invariant measure on $\Str_\Lang$ where the arities of $\Lang$ are bounded by $N < \w$. Then the following are equivalent.
\begin{itemize}
\item $\mu$ is ergodic. 

\item $\mu$ is dissociated. 

\item There is an $\Sym{\Nats}$-recipe $\f =\<f_n\>_{n < N}$ such that 
\begin{itemize}
\item $\RandMod{\f}$ has distribution $\mu$

\item For all $n < N$, $f_n$ does not depend on the coordinate with $\emptyset$-index. 
\end{itemize}

\end{itemize}

\end{theorem}

The next corollary follows from \cref{Ergodic Sym(N)-Invariant Measures for Bounded Arity} as well as from an argument which is essentially the same as that used in its proof. 
\begin{corollary}
\label{Ergodic Sym(N)-Invariant Measures}
Suppose $\Lang$ has unbounded arity and $\mu$ is an $\Sym{\Nats}$-invariant measure on $\StrNR_\Lang$. Then the following are equivalent.
\begin{itemize}
\item[(1)] $\mu$ is ergodic. 

\item[(2)] $\mu$ is dissociated. 

\item[(3)] There is an $\Sym{\Nats}$-recipe $\f =\<f_n\>_{n \in \Nats}$ such that 
\begin{itemize}
\item $\RandMod{\f}$ has distribution $\mu$,

\item For all $n \in \Nats$, $f_n$ does not depend on the coordinate with $\emptyset$-index. 
\end{itemize}

\end{itemize}

\end{corollary}
\begin{Proof}
First we show the equivalence of (1) and (3). Suppose (1) holds. Then for each $n \in \Nats$ we have that $\mu|_{\Lang^n}$ is ergodic as well. By \cref{Ergodic Sym(N)-Invariant Measures for Bounded Arity} we have an $\Sym{\Nats}$-recipe $\f^N =\<f^N_n\>_{n < N}$ for each $\mu|_{\Lang^{N-1}}$ such that each $f^N_n$ does not depend on the variable indexed by $\emptyset$. 

In order to combine these $\Sym{\Nats}$-recipes into a single $\Sym{\Nats}$-recipe for $\mu$ which satisfies (3) we need to proceed as in Lemma 7.21 of \cite{MR2161313} but with the modifications suggested in the last paragraph of the proof of Theorem 7.22 on p. 328 of \cite{MR2161313} and beginning the induction at $k = 1$. 

Next assume (3) holds. Because $\Str_\Lang$ is a Radon space $\mu$ is inner regular. For every measurable set $I$ we can therefore find a measurable set $A_\epsilon$ such that $\mu(I \SymDiff A_\epsilon) < \epsilon$ and which only depends on $\{0, \dots, n-1\}$ (for some $n$). The proof then follows Lemma 7.35 (iii) $\Rightarrow$ (i) of  \cite{MR2161313}. 

Now we show the equivalence of (1) and (2). First note that the Borel $\sigma$-algebra on $\StrNR_{\Lang}$ is generated by sets of the form $\extent{\varphi(\a)}$ where $\varphi$ is quantifier-free and first order. Therefore, $\mu$ is dissociated if and only if $\mu|_{\Lang_0}$ is dissociated for every finite $\Lang_0 \subseteq \Lang$. 

Now suppose (1) does not hold. Then there must be some finite $\Lang_0 \subseteq \Lang$ such that $\mu|_{\Lang_0}$ is not ergodic hence, by \cref{Ergodic Sym(N)-Invariant Measures for Bounded Arity}, not dissociated. But this then implies $\mu$ is not dissociated and so (2) does not hold. 

Finally, suppose (1) does hold. Then (3) holds as well. But then for every finite $\Lang_0 \subseteq \Lang$ there is a $\Sym{\Nats}$-recipe representing $\mu|_{\Lang_0}$ which does not depend on the coordinates with $\emptyset$-index. Therefore, once again by \cref{Ergodic Sym(N)-Invariant Measures for Bounded Arity}, for every finite $\Lang_0 \subseteq \Lang$, $\mu|_{\Lang_0}$ is dissociated. But this then implies $\mu$ is dissociated and so (2) holds. 
\end{Proof}
This result motivates the next definition. 
\begin{definition}
We say an $\Sym{\Nats}$-recipe $\f =\<f_n\>_{n < N}$ (for $N \leq \w$) is \defn{ergodic} if it doesn't depend on the coordinate indexed by $\emptyset$.
\end{definition}

Note that there may be $\Sym{\Nats}$-recipes which are not ergodic but whose distributions are, as not all representations of an ergodic measure satisfy the conditions of \cref{Ergodic Sym(N)-Invariant Measures}. However the distribution of any ergodic $\Sym{\Nats}$-recipe is ergodic and by \cref{Ergodic Sym(N)-Invariant Measures} every ergodic $\Sym{\Nats}$-invariant measure is the distribution of some ergodic $\Sym{\Nats}$-recipe.

\begin{proposition}
\label{Ergodic Aldous-Hoover-Kallenberg Equivalence of Representations}
Suppose $N \leq \w$, $\f^0 = \<f_n^0\>_{n < N}$ and $\f^1 = \<f_n^1\>_{n < N}$ are ergodic $\Sym{\Nats}$-recipes and $(\zeta_\a)_{\a \in \Powerset_{<N}(\Nats)}$, $(\eta_\a)_{\a \in \Powerset_{<N}(\Nats)}$ are $U[0,1]$-arrays. Then the following are equivalent.
\begin{itemize}
\item $(f^0_{|\a|}(\widehat{\zeta}_\a))_{\a\in \Nats^{[<N]}} \eqd (f^1_{|\a|}(\widehat{\zeta}_\a))_{\a\in \Nats^{[<N]}}$. 

\item For each $n < N$ there are functions $g^0_n, g^1_n\: [0,1]^{\Powerset(n)} \to [0,1]$ which preserve $\lambda$ in the highest order arguments, which don't depend on the argument indexed by $\emptyset$ and are such that 
\[
(f^0_{|\a|}(\widehat{G^0_{\a}}(\widehat{\zeta}_\a)))_{\a\in \Nats^{[<N]}} = (f^1_{|\a|}(\widehat{G^1_{\a}}(\widehat{\zeta}_\a)))_{\a\in \Nats^{[<N]}}\ \as
\]
where $(G^i_{\a})_{\a\in \Nats^{[<N]}} = (g^i_{|\a|})_{\a\in \Nats^{[<N]}}$. 

\item For each $n < N$ there is a function $h_n\: [0,1]^{\Powerset(n)} \times [0,1]^{\Powerset(n)} \to [0,1]$ which maps $\lambda^2$ to $\lambda$ in the highest order arguments, which don't depend on the argument indexed by $\emptyset$ and are such that 
\[
(f^0_{|\a|}(\widehat{\zeta}_\a))_{\a\in \Nats^{[<N]}} = (f^1_{|\a|}(\widehat{H_{\a}}(\widehat{\zeta}_\a, \widehat{\eta}_\a)))_{\a\in \Nats^{[<N]}}\ \as
\]
where $(H_{\a})_{\a\in \Nats^{[<N]}} = (h_{|\a|})_{\a\in \Nats^{[<N]}}$. 
\end{itemize}

\end{proposition}
\begin{Proof}
This proof follows closely that of Theorem 7.28 of \cite{MR2161313}. 
\end{Proof}

\subsection{Existence of an $\Sym{\Nats}$-Invariant Measure}
\label{Existence of S-Nat-Invariant Measures Subsection}

The following is an important component of our classification in \cref{Complete classification over structures with trivial dcl} of those sentences of $\Lwow(\Lang)$ which admit an $\Aut(\M)$-invariant measures, i.e. those sentences $\tau$ for which there is an $\Aut(\M)$-invariant measure concentrated on $\extent{\tau}_\M$.

\begin{theorem}[\cite{AFP-Complete-Classification}]
	\label{AFP-Complete classification}
For a sentence $\tau \in \Lwow(\Lang)$ the following are equivalent
\begin{itemize}
\item There is an $\Sym{\Nats}$-invariant measure concentrated on $\extent{\tau}$. 

\item There is an ergodic $\Sym{\Nats}$-invariant measure concentrated on $\extent{\tau}$. 

\item For all countable fragments $A \subseteq \Lwow(L)$ there is an $A$-theories $T$ with trivial dcl containing $\tau$. 

\item There is a countable fragment $A \subseteq \Lwow(L)$ and an $A$-theory $T$ with trivial dcl containing $\tau$. 

\end{itemize}
\end{theorem}

An immediate consequence of \cref{AFP-Complete classification} is the following.

\begin{theorem}[\cite{MR3515800} Thm.~1.1]
	\label{AFP-Main Result}
For an $\Lang$-structure $\M$ the following are equivalent
\begin{itemize}
\item There is an $\Sym{\Nats}$-invariant measure concentrated on $\extent{\ScottSen{\M}}$. 

\item $\M$ has trivial dcl.
\end{itemize}
\end{theorem}

\section{Free Structures}
\label{Free Structures Section}

In this section we introduce the abstract notion of a \emph{canonical structure} and we show that it corresponds, up to relabeling of the language, with being the canonical structure of a closed subgroup of $\Sym{\Nats}$. We then introduce the notion of a free canonical structure and show that every canonical structure has a minimal free extension (in a sense we make precise). 

\subsection{Canonical Structures}
\label{Canonical Structures Subsection}

\begin{definition}
We say a structure $\M$ is \defn{canonical} (for a language $\Lang_\M$) if
\begin{itemize}
\item $\M$ is ultrahomogeneous, 

\item $\M$ is non-redundant, 

\item for all relation $R \in \Lang_\M$ there is a tuple $\a \in M$ such that $\M \models R(\a)$, and 

\item for all $n \in \Nats$ and all $n$-tuples $\a \in \M$ with distinct entries there is a unique $n$-ary relation $R \in \Lang_\M$ such that $\M \models R(\a)$. 

\end{itemize}

\end{definition}

\begin{lemma}
The following are equivalent for an $\Lang_\M$-structure $\M$.
\begin{itemize}
\item $\M$ is canonical, 

\item there is a relabeling $i\: \Lang_\M \to \Lang_{\Aut(\M)}$ such that for any relation $R \in \Lang_\M$ and tuple $\n \in \Nats$, $\M \models R(\n)$ if and only if $\CanStr{\Aut(\M)} \models i(R)(\n)$, i.e. $\M$ and $\CanStr{\Aut(\M)}$ are the same structure up to a relabeling of the language.  

\end{itemize}

\end{lemma}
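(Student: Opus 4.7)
The plan is to prove the equivalence in both directions.

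For $(2) \Rightarrow (1)$, since a relabeling of the language preserves ultrahomogeneity, non-redundancy, and the uniqueness-of-satisfied-relation axiom, it suffices to check that $\M_{\Aut(\M)}$ itself is canonical. Ultrahomogeneity is given by \cref{Info about canonical structure of a group}. Non-redundancy holds by construction, since the relations of $\Lang_{\Aut(\M)}$ are indexed by $\sim_{\Aut(\M)}$-classes of tuples in $\NatsIndex$, and all such tuples have distinct entries; hence $R_{[\a]}$ never holds of a tuple with a repeated entry. The third axiom is also immediate from the definition of $\M_{\Aut(\M)}$: each $n$-tuple $\a$ of distinct entries lies in exactly one $\sim_{\Aut(\M)}$-orbit, and $\M_{\Aut(\M)} \models R_A(\a)$ iff $\a \in A$, so $\a$ satisfies the $n$-ary relation $R_{[\a]_{\sim_{\Aut(\M)}}}$ and no other.

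For $(1) \Rightarrow (2)$, I construct $i \colon \Lang_\M \to \Lang_{\Aut(\M)}$ by sending each $R \in \Lang_\M$ of arity $n$ to $R_{[\a]_{\sim_{\Aut(\M)}}}$, where $\a$ is any witness to $\M \models R(\a)$. The central step is showing this assignment is well defined and bijective. Well-definedness amounts to showing that any two $n$-tuples $\a, \b$ with $\M \models R(\a) \And R(\b)$ lie in the same $\Aut(\M)$-orbit. By ultrahomogeneity this reduces to proving the map $\a_i \mapsto \b_i$ is an isomorphism of the induced finite substructures, and for this I would invoke axiom~3 at every sub-arity to pin down the unique relation satisfied by each sub-tuple of $\a$ and match it with the corresponding one for $\b$. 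Injectivity of $i$ is immediate from the uniqueness clause of axiom~3 (distinct relations of the same arity have disjoint satisfaction sets and hence index distinct orbits), and surjectivity follows by picking a representative $\c$ of each orbit in $\NatsIndex$ and letting axiom~3 produce the unique $R \in \Lang_\M$ with $\M \models R(\c)$ as its preimage.

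The biconditional $\M \models R(\n) \iff \M_{\Aut(\M)} \models i(R)(\n)$ then falls out of the construction, since $\M_{\Aut(\M)} \models R_{[\a]}(\n)$ iff $\n \in [\a]$ iff $\M \models R(\n)$, using $\Aut(\M)$-invariance of the satisfaction set of $R$ together with its being a single orbit. The main obstacle I expect is the well-definedness step: ruling out the scenario where a single relation $R \in \Lang_\M$ is satisfied by tuples from distinct $\Aut(\M)$-orbits. The available tools are ultrahomogeneity, non-redundancy, and axiom~3 applied recursively at each sub-arity; stitching these into a full substructure isomorphism from $\a$ to $\b$ is the delicate part, and everything else is essentially bookkeeping.
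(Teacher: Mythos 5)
Your argument mirrors the paper's: $(2)\Rightarrow(1)$ reduces to checking that $\M_G$ is canonical for any closed $G$, and for $(1)\Rightarrow(2)$ the whole burden is to show that the satisfaction set of each $R\in\Lang_\M$ is a single $\Aut(\M)$-orbit, equivalently that $\M\models R(\a)\And R(\b)$ forces $\a$ and $\b$ into one orbit. You correctly single this out as the crux, and you propose to close it by exhibiting a substructure isomorphism $a_i\mapsto b_i$ and then invoking ultrahomogeneity, "matching" the unique relation holding of each sub-tuple of $\a$ with the unique relation holding of the corresponding sub-tuple of $\b$.

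That matching step is not merely delicate--as sketched it is unsupported, and it cannot be extracted from the three clauses of the definition alone. Axiom~3 produces a unique relation for each sub-tuple of $\a$ and a unique relation for each sub-tuple of $\b$, but gives no reason whatsoever that the two coincide; yet that coincidence is exactly what must hold before $a_i\mapsto b_i$ is an isomorphism and ultrahomogeneity can be applied. To see the gap is real, decorate the Rado graph $\Rado$ with the trivially-true unary relation, the binary relations $E$ and $E'$ for edge/non-edge, and, at each arity $n\ge 3$, one relation per $\Aut(\Rado)$-orbit of $n$-tuples--except merge the all-edges and no-edges ternary orbits into a single relation $R_3$. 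The extra relations are unions of orbits, so they impose no new constraints on finite isomorphisms and do not change the automorphism group; the resulting $\Lang_\M$-structure is therefore ultrahomogeneous, non-redundant, and assigns a unique relation to every tuple with distinct entries, yet $R_3$ is realized on two distinct orbits, so no relabeling as in $(2)$ can exist. What your induction actually needs is the \emph{restriction} compatibility the paper states only \emph{after} this lemma (that the relation holding of a sub-tuple is determined by the relation holding of the ambient tuple), and this must be added as a hypothesis or argued separately rather than read off from axiom~3. To be fair, the paper's own one-line justification, \qu{as there is a unique relation holding of any tuple with distinct elements}, glosses over precisely the same point, so your write-up is no weaker than the published one--but neither, as stated, establishes the well-definedness of $i$.
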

\begin{Proof} 
It is clear that for any group $G$, $\CanStr{G}$ is canonical. In the other direction if $\M$ is canonical, then for any relation $R \in \Lang$ and any tuple $\b$ such that $\M \models R(\b)$, we have $\{\a\st (\exists g \in \Aut(\M)) g\a = \b\} = \{\a\st \M \models R(\a)\}$ (as there is a unique relation holding of any tuple with distinct elements and $\M$ is ultrahomogeneous).
\end{Proof}

When $i_0, \dots, i_{k-1} < n$ we will use 
\[
R(x_0, \dots, x_{n-1})|_{(x_{i_0}, \dots, x_{i_{k-1}})} = P
\] 
as shorthand for the statement 
\[
(\forall x_0, \dots, x_{n-1})\ [R(x_0, \dots, x_{n-1}) \rightarrow P(x_{i_0}, \dots, x_{i_{k-1}})].
\]
If $\M$ is a canonical $\Lang$-structure, whenever 
\[
\M \models (\exists x_0, \dots, x_{n-1})\ [R(x_0, \dots, x_{n-1}) \And P(x_{i_0}, \dots, x_{i_{k-1}})]
\] 
we also have 
\[
\M \models R(x_0, \dots, x_{n-1})|_{(x_{i_0}, \dots, x_{i_{k-1}})} = P
\]
and hence for all $R(x_0, \dots, x_{n-1})$ and $i_0, \dots, i_{k-1} < n$ there is a unique $P \in\Lang$ such that $\M \models R(x_0, \dots, x_{n-1})|_{(x_{i_0}, \dots, x_{i_{k-1}})} = P$. We call $P$ the \defn{restriction} of $R(x_0, \dots, x_{n-1})$ to $(x_{i_0}, \dots, x_{i_{k-1}})$.

There is a natural notion of when one canonical structure is contained in another. 

\begin{definition}

If $\M$ is an ultrahomogeneous $\Lang$-structure let $\Age(\M)$ be the class of all finite $\Lang$-structures isomorphic to a finite substructure of $\M$. 
\end{definition}

\begin{definition}

Suppose $\M_0, \M_1$ are canonical structures in languages $\Lang_0, \Lang_1$ respectively. We say $\M_0 \subseteq_{can} \M_1$ if 
\begin{itemize}
\item[(i)] $\Lang_0 \subseteq \Lang_1$. 

\item[(ii)] For all $A_0 \in \Age(\M_0)$ there is an $A_1 \in \Age(\M_1)$ such that $A_0 = A_1|_{\Lang_0}$. In particular, as $\M_0$ and $\M_1$ are canonical such an $A_1$ is unique. 

\end{itemize} 
We say $\M_0 \sqsubseteq_{can} \M_1$ if there is a map of languages $i\:\Lang_0 \to \Lang_0^*$ such that $i(\M_0) \subseteq_{can} \M_1$. 

We say $\M_0 \preceq_{can} \M_1$ if there is a  relabeling $i\:\Lang_0 \to \Lang_0^*$ such that $i(\M_0) \subseteq_{can} \M_1$. 
\end{definition}

Note because our structures are canonical the $A_1$ in condition (ii) is unique and is trivial on $\Lang_1 \setminus \Lang_0$, i.e.\ $A_1 \models (\forall \x)\neg R(\x)$ for any $R \in \Lang_1 \setminus \Lang_0$.

An important fact about canonical structures is that if $\M_0 \subseteq_{can} \M_1$ then every $\Aut(\M_1)$-invariant measure on $\StrNR_{\Lang}(\M_1)$ restricts to an $\Aut(\M_0)$-invariant measure on $\StrNR_{\Lang}(\M_0)$. Specifically, suppose $\M_0 \subseteq_{can} \M_1$ and $\mu \in \StrNR_{\Lang}(\M_1)$. Let $\mu_0$ the map such that whenever 
\begin{itemize}
\item $P \in \Lang_{\M_0}$,

\item $\M_0 \models P(n_0, \dots, n_{k-1})$, 

\item $\M_1\models P(m_0, \dots, m_{k-1})$, and

\item $\eta \in \qfpi(\Lang)$ with parameters contained in $\{n_0, \dots, n_{k-1}\}$

\end{itemize}
then $\mu_0(\eta(n_0, \dots, n_{k-1})) = \mu(\eta(m_0, \dots, m_{k-1}))$. 

Note that as $\mu$ is $\Aut(\M_1)$ invariant, the value of $\mu_0$ is independent of the specific choice of $(m_0, \dots, m_{k-1})$, so long as the tuple satisfies $P$. 

By \cref{Measures from pi system} there is then a unique $\Aut(\M_0)$-invariant measure extending $\mu_0$, which we call the \defn{restriction} of $\mu$ to $\M_0$ and denote $\mu|_{\M_0}$.

\subsection{Free Completions and Invariant Measures on Free Structures}
\label{Free Structures Subsection}

For our purposes we will be interested in a very specific type of canonical structure.

\begin{definition}
\label{Free Structure}
Suppose $\M$ is a canonical structure. 
For $n \in \Nats$ let $X \defas \{x_k\}_{k \in [n+1]}$ be a set of distinct variables and and let $(\x_i)_{i \in [m]}$ be a collection of distinct sequences of elements of $X$ such that every element of $X$ is in at least one sequence. Further let $\x$ be an enumeration of $X$. 

We say a collection $\{R_i(\x_i)\}_{i \in [m]}$ is \defn{compatible} with $\M$ if
\begin{itemize}
\item whenever $\y$ is a sequence of distinct elements contained in $\x_i$ for some $i \in [m]$ there is a $j \in [m]$ such that $\x_j = \y$, and

\item for all $i, j \in [m]$, if $\y$ is a sequence of distinct elements contained in both $\x_i$ and $\x_j$ then $\M \models R_i(\x_i)|_{\y} = R_j(\x_j)|_{\y}$. 
\end{itemize}
We say $X$ is the collection of free variables of $\ol{R}$. 

We say an atomic formula $R^*(\x)$ of arty $n+1$ is an \defn{extension} of $\{R_i(\x_{A_i})\}_{i \in [m]}$ if for all $i \in [m]$, $\M \models R^*(\x) |_{\x_i} = R_i$. We say $\{R_i(\x_i)\}_{i \in [m]}$ is \defn{total} if it contains an extension of itself. 

We say that $\M$ is \defn{free} if all compatible collections have an extension. 

\end{definition}

If $\M$ is not canonical, we say $\M$ is free if $\CanStr{\Aut(\M)}$ is free.  A free structure can be thought of as a structure where any way of amalgamating types is consistent so long as it is locally consistent. 

\begin{example}
The quintessential example of a free structure is the Rado graph, \Rado. The canonical structure $\CanStr{\Aut(\Rado)}$ is the structure where for every finite graph $G$ there is a relation $R_G$ which holds exactly when the parameters form a graph isomorphic to $G$.
\end{example}

We also have the following example of a canonical structure which is not free. 
\begin{example}
Let $\Trado$ be the triangle free random graph. The canonical structure of $\Trado$ is the structure where for every finite triangle free graph $G$ there is a relation $R_G$ which holds exactly when the parameters form a graph isomorphic to $G$.

This structure is not free. To see this let $E$ be the graph with two elements and an edge between them. Then $\{E(x_0, x_1), E(x_1, x_2), E(x_2, x_0)\}$ is a compatible collection which is not the restriction of any relation in the canonical structure of $\Trado$. This is a quintessential example of how a canonical structure can fail to be free. 

\end{example}

Even though not all canonical structures are free, every canonical structure is contained in a free canonical structure. Further we can find a minimal such free extension.

\begin{lemma}
\label{Existence of free extension of canonical structure}
Suppose $\M$ is a canonical $\Lang_\M$-structure. Then there is a canonical structure $\FreeCS{\M}$ such that 
\begin{itemize}
\item $\FreeCS{\M}$ is free, 

\item $\M \subseteq_{can} \FreeCS{\M}$, and

\item whenever $\M \sqsubseteq_{can} \N$ and $\N$ is free then $\FreeCS{\M} \sqsubseteq_{can} \N$.

\end{itemize}

We call $\FreeCS{\M}$ the \defn{free completion} of $\M$.  

\end{lemma}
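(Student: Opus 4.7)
The plan is to construct $\FreeCS{\M}$ as the Fra\"iss\'e limit of an age obtained by iteratively closing $\Age(\M)$ under the addition of extensions of compatible collections, and then to derive the three required properties directly from the construction.

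First I would build an increasing chain of languages $\Lang_\M = \Lang_0 \subseteq \Lang_1 \subseteq \cdots$ together with hereditary classes of finite canonical structures $\Age(\M) = \mathcal{K}_0 \subseteq \mathcal{K}_1 \subseteq \cdots$, where $\mathcal{K}_k$ consists of canonical $\Lang_k$-structures. At each stage $k+1$, for every compatible collection $\<R_i(\x^n_i)\>_{i<n+1}$ of $\Lang_k$-relations that has no extension in $\mathcal{K}_k$, I adjoin a fresh $(n+1)$-ary relation symbol $R_{\mathbf{c}}$ to form $\Lang_{k+1}$ and add to $\mathcal{K}_{k+1}$ the canonical $(n+1)$-element structure realizing $\mathbf{c}$ via $R_{\mathbf{c}}$, together with all canonical amalgamations of this structure with members of $\mathcal{K}_k$. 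Set $\Lang^* = \bigcup_k \Lang_k$ and $\mathcal{K}^* = \bigcup_k \mathcal{K}_k$, and take $\FreeCS{\M}$ to be the Fra\"iss\'e limit of $\mathcal{K}^*$ realized on $\Nats$.

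Next I would verify that $\mathcal{K}^*$ is a countable Fra\"iss\'e class. Countability and closure under substructures/isomorphism are immediate; joint embedding passes up from $\Age(\M)$ through the construction; amalgamation follows by induction on stages because the newly-introduced symbols are precisely what is needed to resolve compatibility issues. Since every structure in $\mathcal{K}^*$ is canonical by design, the Fra\"iss\'e limit $\FreeCS{\M}$ is a canonical $\Lang^*$-structure. The three properties then follow: \emph{freeness} holds because any compatible collection in $\Lang^*$ involves only finitely many symbols and thus lies in some $\Lang_k$, hence gets an extension in $\mathcal{K}_{k+1}$ realized in $\FreeCS{\M}$ by ultrahomogeneity; the containment $\M \subseteq_{can} \FreeCS{\M}$ holds because $\Lang_\M \subseteq \Lang^*$ and each $A \in \Age(\M) = \mathcal{K}_0$ embeds into $\FreeCS{\M}$ with no relations from $\Lang^* \setminus \Lang_\M$ holding on its tuples, since such relations were introduced only as witnesses to previously unrealized compatible collections. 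For \emph{minimality}, given any free canonical $\N$ with $\M \subseteq_{can} \N$, I would inductively build an arity-preserving injection $\iota \colon \Lang^* \to \Lang_\N$, which after relabeling may be taken to be the required inclusion: at stage $k+1$, each fresh symbol $R_{\mathbf{c}}$ corresponds via $\iota|_{\Lang_k}$ to a compatible collection of $\Lang_\N$-relations, which by freeness of $\N$ has an extension in $\N$; let $\iota(R_{\mathbf{c}})$ be the symbol of that extension. Verifying that every $A \in \mathcal{K}^*$ has its $\iota$-image lying in $\Age(\N)$ witnesses $\FreeCS{\M} \subseteq_{can} \N$.

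The main obstacle will be establishing the amalgamation property of $\mathcal{K}^*$. The subtle point is that amalgamating two members of $\mathcal{K}^*$ over a common substructure may create new tuples at the interface whose relations across $\Lang^*$ must be determined consistently, potentially yielding new compatible collections requiring extensions. Any such compatible collection lives in some $\Lang_k$, so passing to a sufficiently high stage makes the required extensions available; but making this stage-wise bookkeeping precise, and confirming the process can be carried out without conflict between the contributions of different stages, is the technically demanding part of the argument.
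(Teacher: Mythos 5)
Your approach is in the same spirit as the paper's---iteratively enlarge the language by adjoining fresh symbols to serve as extensions of unresolved compatible collections, then take the Fra\"iss\'e limit---but the organization differs in a way that matters, and the proposal leaves exactly the nontrivial verification open.

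The paper indexes the construction by \emph{arity} rather than by abstract closure stages. This is cleaner for a structural reason you should exploit: a compatible collection of size $n+1$ consists, by definition, of $n$-ary relations. Hence once $\Lang_{\FreeCS{\M}}^{\leq n}$ is fixed, \emph{all} compatible collections of size $n+1$ are already known, and one can introduce all needed $(n+1)$-ary extension symbols in a single pass at arity $n+1$. After that pass, nothing at arity $\leq n+1$ is ever revisited. Your stage-wise scheme, which at stage $k+1$ processes compatible collections of all arities over $\Lang_k$, eventually stabilizes at each arity for the same reason (any collection involves finitely many symbols), but it intermixes arities in a way that makes it harder to see that the process closes. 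More importantly, your description of $\mathcal{K}_{k+1}$---``add the $(n+1)$-element structure realizing $\mathbf{c}$, together with all canonical amalgamations with members of $\mathcal{K}_k$''---already presupposes that such amalgamations can be formed coherently, which is precisely the thing still to be proved. The paper avoids this circularity by defining $\Age(\FreeCS{\M})$ extensionally: it is the class of all finite $\Lang_{\FreeCS{\M}}$-structures that satisfy the accumulated restriction theory $\bigcup_n T_n$ and assign a unique relation symbol to each tuple of distinct elements. That class is manifestly closed under substructure, and HP/JEP/SAP are then verified against this definition.

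The genuine gap is that you explicitly defer the amalgamation argument, and you overestimate its difficulty while underestimating what makes it work. You worry that amalgamating $X$ and $Y$ over a common substructure might spawn new compatible collections at the interface that require still newer symbols. But that cannot happen once the language has been closed: by the time the language is fully built, \emph{every} compatible collection of $n$-ary relations has at least one $(n+1)$-ary extension in the language. So the amalgam $Z$ on $X\cup Y$ is built arity by arity: at arity $n+1$, the relations inherited from $X$ and $Y$ are already fixed and consistent, and for each cross-tuple one simply picks, arbitrarily, any $(n+1)$-ary relation that restricts correctly to the already-determined $n$-ary structure---such a relation exists by construction, with no new symbols needed. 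This gives the strong amalgamation property (the paper proves SAP, not merely AP, and SAP is what you will want downstream, e.g.\ for trivial definable closure). Your proposal as written does not contain this observation, and without it the claim that $\mathcal{K}^*$ is a Fra\"iss\'e class remains unestablished, so the proposal is not yet a proof. Your treatment of $\M \subseteq_{can} \FreeCS{\M}$ and of minimality (building $\iota\colon \Lang^*\to\Lang_\N$ by choosing, for each fresh symbol, an extension in $\N$ guaranteed by freeness of $\N$) matches the paper and is fine.
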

\begin{Proof}
For $n \in \w$ let $Y$ be the set of collections compatible with $\M$ which are not total. Let $\Lang_{\FreeCS{\M}} = \Lang_{\M} \cup \{P_{\ol{R}} \st \ol{R} \in Y\}$ where each $P_{\ol{R}}$ is a new relation in $\Lang_\M$ of the same arity as as $\ol{R}$. Let $T$ be the theory where
\begin{itemize}
\item $R(\x)|_\y = P \in T$ for all $R \in \Lang_\M$ and $P \in \Lang_\M$ such that $\M \models R(\x)|_{\y} = P$, and 
\end{itemize}
for $\ol{R} = \{R_i(\x_i)\}_{i \in [m]} \in Y$ of arity $[n]$ with $X$ the set free variables,
\begin{itemize}
\item $P_{\ol{R}}(\x) |_{\x_i} = R_i \in T$,  

\item $P_{\ol{R}}(\x) |_{\y} = P \in T$ when there is an $i \in [m]$ such that $\y \subseteq \x_i$ and $R_i(\x_i) |_\y = P \in T$, 

\item if $\sigma\:X \to X$ is a bijection and $\sigma\ol{R} = \{R_i(\sigma(\x_i))\}_{i \in [m]}$ then  $\ol{R}(\x)|_{\sigma(\x)} = \sigma\ol{R} \in T$, and

\item whenever 
\begin{itemize}
\item $Y \subseteq X$ and $\y$ is the subtuple of $\x$ containing the elements of $Y$, and

\item $\ol{S}(\y)$ is the collection of those $R_i(\x_i)$ such that all elements of $\x_i$ contained in $Y$, 

\end{itemize}
then $P_{\ol{R}}(\x) |_{\y} = P_{\ol{S}} \in T$.

\end{itemize}
For notational convenience if $\ol{R}$ is total of arity $[n]$ with $X$ the set free variables, $\x$ is an enumeration of $X$ and $R(\x) \in \ol{R}$ then we let $P_{\ol{R}} = R$. 

Let $\Age(\FreeCS{\M})$ be the collection of finite $\Lang_{\FreeCS{\M}}$-structures $A$ such that $A \models T$. Note it is easy to check that for each $A \in \Age(\FreeCS{\M})$ and tuple $\a \in A$ of distinct elements there is a unique relation $R\in \Lang_{\FreeCS{\M}}$ such that $A \models R(\a)$. 

For $A \in \Age(\FreeCS{\M})$ with enumeration $\a$ let $\ol{R}_\a  = \{R_i(\a_i)\}_{i \in [m]}$ be the collection of relations such that $R_i \in \Lang_\M$ and $\a_i \in A$. Note that $A \models P_{\ol{R}_\a}(\a)$. Further, if $B \in \Age(\FreeCS{\M})$ with enumeration $\b$ and $B \models P_{\ol{R}_\a}(\b)$ then the map $\a\mapsto \b$ is an isomorphism of $A$ and $B$.

\begin{claim}
\label{Age of FreeCS(M) is a Fraisse class}
$\Age(\FreeCS{\M})$ has the hereditary property (HP), the joint embedding property (JEP), and the disjoint amalgamation property (DAP). 
\end{claim}
\begin{Proof}<Subproof>
It is immediate that $\Age(\FreeCS{\M})$ has the HP and JEP. We now show it has the DAP. 
Suppose $X, Y \in \Age(\FreeCS{\M})$ are such that $X|_{X \cap Y} = Y|_{X \cap Y}$ and $\z$ is an enumeration of $X \cup Y$. Let $\ol{R}_{X, Y} = \{R_i(\z_i)\}_{i \in [m]}$ be the collection of relations such that $R_i \in \Lang_\M$ and either $\z_i \in X$ and $X \models R_i(\z_i)$,  or $\z_i \in Y$ and $Y \models R_i(\z_i)$. It is easy to check that $\ol{R}_{X, Y}$ is a collection compatible with $\M$. Further, if $Z\in \Age(\FreeCS{\M})$ is the structure with underlying set $X \cup Y$ such that $\ol{R}_\z = P_{\ol{R}_{X, Y}}$ then $Z$ is an amalgamation of $X$ and $Y$. 
\end{Proof}

As $\Age(\FreeCS{\M})$ satisfies (HP), (JEP), and (DAP) there is a unique (up to isomorphism) countable structure which is its \Fraisse\ limit. We denote this structure by $\FreeCS{\M}$. Note that by construction of $\Age(\FreeCS{\M})$ we have that $\FreeCS{\M}$ is free and canonical. 
All that is left is to show that $\FreeCS{\M}$ is $\preceq_{can}$-minimal among free structures containing $\M$. 

Suppose $\M \preceq_{can} \N$ with $\N$ free with the corresponding injection $i^*\: \Lang_\M \to \Lang_\N$. We will define our map $i\:\Lang_{\FreeCS{\M}} \to \Lang_\N$ by induction. Note $i$ will extend $i^*$. 

First, as $\Lang_{\M}^1 = \Lang_{\FreeCS{\M}}^1$ we let $i(U) = i^*(U)$ for all $U \in \Lang_{\FreeCS{\M}}^1$. 
Suppose $i$ has been defined for all relations of arity at most $n$ and let $Q$ be a relation of arity $n+1$. If $Q\in \Lang_{\M}$ let $i(Q) = i^*(Q)$. 

Suppose $Q \in \Lang_{\FreeCS{\M}} \setminus \Lang_\M$ and let $\ol{R} = \{R_i(\x_i)\}_{i \in [m]}$ be such $X$ is the set of free variables, $\x$ is an enumeration $X$ with $Q(\x) = P_{\ol{R}}(\x)$. For $y \in X$ let $\ol{R}^y$ be the collection of those $R_i(\x_i)$ with $\x_i \subseteq X \setminus \{y\}$ and let $\x_y$ be the corresponding enumeration of $X\setminus \{y\}$. Note that $\{\ol{R}^y(\x_y)\}_{y \in X}$ is a compatible collection with respect to $\FreeCS{\M}$ all of whose relations have arity $<n$. Therefore $\{i(P_{\ol{R}^y})(\x_y)\}_{y \in X}$ is a compatible collection with respect to $\N$. But $\N$ is free, and so there must be some extension $Q^+$ of $\{i(P_{\ol{R}^y})(\x_y)\}_{y \in X}$. Let $i(Q) = Q^+$. 
\end{Proof}

If $\M$ is not a canonical structure, then we define $\FreeCS{\M} \defas \FreeCS{\CanStr{\Aut(\M)}}$. It is worth noting that if $\M \subseteq_{can} \N$ then the corresponding map of languages that witnesses $\FreeCS{\M} \sqsubseteq_{can} \N$ is not in general unique. However for our purposes this will not be important. 

\begin{lemma}
If $\M$ is free then $\M$ has trivial dcl. 
\end{lemma}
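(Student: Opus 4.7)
The plan is to first reduce to the case $\M = \M_{\Aut(\M)}$, which is legitimate because $\Aut(\M) = \Aut(\M_{\Aut(\M)})$, so the two structures share the same definable closure operator, and by definition $\M$ is free iff $\M_{\Aut(\M)}$ is. So assume $\M$ is canonical. Given a tuple $\a = (a_0, \ldots, a_{n-1})$ of distinct elements and $b \in \M$ with $b \notin \a$, I must produce $g \in \Aut(\M)$ fixing $\a$ pointwise with $g(b) \neq b$. Let $R_{\a, b} \in \Lang_\M$ denote the unique $(n+1)$-ary relation with $\M \models R_{\a, b}(\a, b)$. By ultrahomogeneity of $\M$ it suffices to find $b' \in \M \setminus (\a \cup \{b\})$ with $\M \models R_{\a, b}(\a, b')$, for then the automorphism sending $(\a, b)$ to $(\a, b')$ does the job.

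To produce such a $b'$, I would use freeness to construct an $(n+2)$-ary relation $R^* \in \Lang_\M$ whose restriction omitting either the $n$th or the $(n+1)$th variable equals $R_{\a, b}$. Build $R^*$ by recursion on $|S|$: for each nonempty $S \subseteq \{0, \ldots, n+1\}$, define a $|S|$-ary relation $R^*_S \in \Lang_\M$ by (i) if $S$ does not contain both $n$ and $n+1$, letting $R^*_S$ be the type in $\M$ of the corresponding subtuple of $\a \cup \{b\}$ (identifying position $n+1$ with position $n$ when needed), and (ii) if $S$ does contain both, applying freeness to the already-defined collection $(R^*_{S \setminus \{s\}})_{s \in S}$, which is compatible because the double restrictions $R^*_{S \setminus \{s, t\}}$ agree by the inductive hypothesis. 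Setting $R^* := R^*_{\{0, \ldots, n+1\}}$, the two required restrictions both equal $R_{\a, b}$ by case (i).

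Finally, since $\M$ is canonical, every relation in $\Lang_\M$ names a nonempty $\Aut(\M)$-orbit, so $R^*$ is realized by some $(c_0, \ldots, c_{n+1})$ in $\M$ with pairwise distinct entries (by non-redundancy). The subtuple $(c_0, \ldots, c_n)$ realizes $R_{\a, b}$, so by ultrahomogeneity some $g \in \Aut(\M)$ sends it to $(\a, b)$; then $b' := g(c_{n+1})$ lies outside $\a \cup \{b\}$ and satisfies $\M \models R_{\a, b}(\a, b')$, as required. The main obstacle I anticipate is verifying compatibility in the recursive construction, particularly that the ``base case'' assignments from case (i) agree on their common restrictions and mesh with what the freeness step needs at each stage; this is bookkeeping that boils down to the internal consistency of the type structure of $\M$ together with the interchangeability of $b$ and $b'$ in the base case.
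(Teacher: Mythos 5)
Your proposal is correct, and it takes a genuinely different route from the paper. The paper disposes of the lemma in two lines by invoking Hodges' characterization (Thm.~7.1.8 in \cite{MR1221741}) that a relational ultrahomogeneous structure has trivial dcl iff its age has the strong amalgamation property, and then appealing to the already-established Claim that $\Age(\FreeCS{\M})$ has SAP together with the observation that $\M$ free forces $\M = \FreeCS{\M}$. You instead give a direct, self-contained argument: after the (same) reduction to the canonical structure, given $\a$ and $b\notin\a$ you build an $(n+2)$-ary relation $R^*$ with two coordinate projections both equal to $\qftp(\a,b)$, by running a downward-closed recursion over subsets $S\subseteq\{0,\dots,n+1\}$ in which you fill in types of subtuples of $(\a,b,b)$ whenever $S$ misses one of the two ``copies of $b$'' and invoke freeness exactly when $S$ sees both. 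The compatibility check needed for the freeness step is correctly handled by the maintained invariant that $R^*_S|_{S'} = R^*_{S'}$ for $S'\subsetneq S$, and the endgame (realizability of $R^*$ because $\M$ is canonical, distinctness of entries because $\M$ is non-redundant, and two applications of ultrahomogeneity) goes through. Conceptually your construction is a one-point self-amalgamation and is thus the special case of SAP that this lemma actually needs; what your route buys is a proof that avoids the external citation and exhibits concretely how freeness produces the moving automorphism, while the paper's route is shorter given that it has already proved SAP for $\Age(\FreeCS{\M})$ in the course of constructing the free completion.
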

\begin{Proof}
Note $\M$ has trivial dcl if and only if $\CanStr{\Aut(\M)}$, the canonical structure of $\M$ does. We can therefore assume without loss of generality that $\M$ is canonical and hence ultrahomogeneous. By \cite{MR1221741} Thm.~7.1.8, an ultrahomogeneous structure $\M$ in a relational language has trivial dcl if and only if $\Age(\M)$ has disjoint amalgamation. Now suppose $A, B \in \Age(\M)$ are such that $A|_{A\cap B} = B|_{A \cap B}$. Let $\c$ be an enumeration of $A \cup B$. Let $\ol{R}_A = \{R_i(\c_i)\st \c_i \subseteq A\text{ and }A \models R_i(\c_i)\}$ and $\ol{R}_B= \{R_i(\c_i)\st \c_i \subseteq B\text{ and }B \models R_i(\c_i)\}$. Then $\ol{R}_A \cup \ol{R}_B$ is a collection compatible with $\M$ and so there is an expansion $R^*$ of $\ol{R}_A \cup \ol{R}_B$. But there then must be some $C \in\Age(\M)$ such that $C \models R^*(\c)$, and any such $C$ is a strong amalgamation of $A$ and $B$. 
\end{Proof}

An important property of free canonical structures is that each comes with a natural $\Sym{\Nats}$-invariant measure concentrated on its isomorphism class.

\begin{definition}
\label{Uniform representation of free structures}
Suppose $\M$ is a free canonical structure with a chosen ordering on $\Lang_\M^n$ for each $n \in \Nats$. We define a \defn{uniform representation} on $\M$ to be an $\Sym{\Nats}$-recipe of the following form. \nl\nl
\ul{Arity $1$:} \nl
Let $L^1$ be the increasing enumeration of $\Lang_\M^1$ under the chosen ordering. Define $c^\M_1\:[0,1]^2 \to \oqftp_{\Lang_\M^1}(x_0)$ where $c^\M_1(y,z)$ is the type containing $\gamma_{L^1}(z)(x_0)$. Note the first coordinate is not used. This is representation we are defining is ergodic. 

Note that as $\M$ has trivial dcl and is canonical, for any unary relation $U$ there are infinitely many elements of $\M$ which satisfy $U$. Therefore $\RandMod{c^\M_1} \cong \M|_{\Lang_{\M}^1}$\ \as \nl\nl
\ul{Arity $n+1$:} \nl
Assume that $\RandMod{\<c^\M_i\>_{i \leq n}} \cong \M|_{\Lang_\M^{\le n}}$\ \as

Suppose $\ol{R} = \<R_i(\z_i)\>_{i \in [n]}$ is compatible with $\M$ with free variables $Z$ and where for all $Z^* \subsetneq Z$ there is some $i \in [n]$ such that $\x_i$ enumerates $Z^*$.  Then let $X_{\ol{R}}$ be the collection of extensions of $\ol{R}$ in $\M$ and assume $X_{\ol{R}}$ has the order induced by that of the language. Note $X_{\ol{R}}$ is non-empty as $\M$ is free. 

Given $\x \defas \<x_i\>_{i \in \Powerset(n+1)} \in [0, 1]^{\Powerset(n+1)}$ let $\x_i \defas \<x_i\>_{i \in \Powerset(n+1 \setminus\{i\})}$. Further let $\ol{R}^\x \defas \<c^\M_n(\widehat{\x}_i)\>_{i \in [n+1]}$. By induction $\ol{R}^\x$ is \as\ a compatible collection (as $\RandMod{\<c^\M_i\>_{i \leq n}} \cong \M|_{\Lang^{\le n}}$). 
Let $Y_{n+1}(\x) = X_{\ol{R}^\x}$, i.e.\ the collection of extensions of $\ol{R}^\x$.  Let $c^\M_{n+1}(\x) = \gamma_{Y_{n+1}(\x)}(x_{\{0, \dots, n\}})$. 
As $\M$ has trivial dcl it is easy to check that $\RandMod{\<c^\M_i\>_{i \leq n+1}} \cong \M|_{\Lang^{\leq n+1}}$\ \as\ \nl\nl
We let $\c^\M \defas \<c^\M_n\>_{n \in \Nats}$ and call it the \defn{\Erdos-\Renyi\ random $\M$-structure} in analog with the \Erdos-\Renyi\ random graph. 
\end{definition}

The random structure $\RandMod{\c^\M}$ can be thought of as first assigning to each element of $\Nats$ a unary relation in an i.i.d.\ manner. Then assigning a binary relation to every pair of elements in a manner which is i.i.d.\ conditioned on the unary types that were previously assigned. Then assigning to each triple a ternary relation in a manner which is i.i.d.\ conditioned on the binary types that were defined, etc. 

\begin{lemma}
\label{Erdos-Renyi random structure is ergodic}
The distribution of $\RandMod{\c^\M}$ is ergodic.
\end{lemma}
\begin{Proof}
This follows immediately from \cref{Ergodic Sym(N)-Invariant Measures} and the definition of $\c^\M$. 
\end{Proof}
\begin{lemma}
The distribution of $\RandMod{\c^\M}$ concentrates on $\extent{\ScottSen{\M}}$.
\end{lemma}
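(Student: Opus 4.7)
The plan is to show that $\RandMod{\c^\M}$ is almost surely isomorphic to $\M$, from which the conclusion follows immediately by the defining property of $\ScottSen{\M}$ (a countable $\Lang_\M$-structure models $\ScottSen{\M}$ iff it is isomorphic to $\M$). Since $\M$ is a canonical, hence ultrahomogeneous, structure, by Fra\"{i}ss\'{e}'s theorem it suffices to verify almost surely that (i) $\Age(\RandMod{\c^\M}) = \Age(\M)$, and (ii) $\RandMod{\c^\M}$ satisfies the Fra\"{i}ss\'{e} extension property.

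For the age inclusion $\Age(\RandMod{\c^\M}) \subseteq \Age(\M)$, I would use the claim already established during the inductive construction in \cref{Uniform representation of free structures} that $\RandMod{\<c^\M_i\>_{i \leq n}} \cong \M|_{\Lang_\M^{\leq n}}$ a.s. Taking the countable intersection over $n \in \Nats$ yields that a.s.\ this holds for every $n$, and since any finite substructure of $\RandMod{\c^\M}$ is determined by its relations of some bounded arity, its isomorphism type already appears in $\M$.

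The main step is the extension property: for every pair $A \subseteq B$ in $\Age(\M)$ with $|B| = |A|+1$ and every embedding $f\: A \hookrightarrow \RandMod{\c^\M}$, there is some $n \in \Nats \setminus f(A)$ such that extending $f$ by sending the new point of $B$ to $n$ remains an embedding. Fix such $A, B, f$. For each candidate $n \notin f(A)$, whether $n$ realizes the required extension is determined by the random variables $\zeta_\a$ for $\a \subseteq f(A) \cup \{n\}$ with $n \in \a$; across different candidates $n, n'$ the corresponding tuples are disjoint, so the events are independent. At each arity $k \leq |A|+1$ the new $k$-type must match the one prescribed by $B$; by the construction the choice at arity $k$ is $\gamma_{Y_k(\cdot)}$-distributed over the set of extensions compatible with the lower-arity types, which is nonempty (because $\M$ is free and $B \in \Age(\M)$ witnesses the required extension) and finite (the age is locally finite at each arity). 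So the event has positive probability bounded below by a constant depending only on the isomorphism type of $B$ over $A$. By the second Borel--Cantelli lemma, a.s.\ infinitely many $n$ work. There are only countably many pairs $(A,B)$ up to isomorphism and countably many embeddings $f$ into the countable structure $\RandMod{\c^\M}$, so taking the countable intersection preserves the a.s.\ property.

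Once the extension property holds a.s.\ together with $\Age(\RandMod{\c^\M}) \subseteq \Age(\M)$, a standard back-and-forth argument shows $\RandMod{\c^\M}$ is a countable ultrahomogeneous structure with age exactly $\Age(\M)$, so by Fra\"{i}ss\'{e}'s theorem $\RandMod{\c^\M} \cong \M$ a.s., hence $\RandMod{\c^\M} \in \extent{\ScottSen{\M}}$ a.s. The main obstacle to watch is the independence/positive-probability estimate in the extension step: one must carefully identify which $\zeta_\a$ govern the extension at $n$, verify that they are disjoint across distinct candidates $n$, and confirm using freeness of $\M$ that the target type lies in the support of $c^\M_k$ at every arity $k$.
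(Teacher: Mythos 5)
Your proof takes essentially the same route as the paper: establish that the age matches a.s., establish the one‑point Fra\"{i}ss\'{e} extension axioms a.s.\ via a positive‑probability / independence argument, and conclude $\RandMod{\c^\M} \cong \M$ a.s.\ by back‑and‑forth. The paper compresses the Borel--Cantelli step into the phrase ``countable additivity and $\Sym{\Nats}$‑invariance,'' so your explicit appeal to conditional independence of the $\zeta_\a$ with $n \in \a$ across candidates $n$ (given the structure on $f(A)$) is if anything a clarification of what the paper leaves implicit. One small slip: the set of extensions $Y_k(\cdot)$ need not be finite --- a canonical language may have infinitely many $k$‑ary relations and hence infinitely many $k$‑ary orbits --- but this is harmless, since $\gamma$ is defined on countable index sets and assigns each element a positive probability depending only on its position in the chosen ordering, so the lower bound you need is still uniform over candidates $n$.
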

\begin{Proof}
First the quantifier free types realized by $\RandMod{\c^\M}$ are the same as those realized in $\M$ almost surely. Let $\vartheta_\M$ be the distribution of $\RandMod{\c^\M}$. 

Suppose $\M \models R(\x, y)|_{\x} = P$. By a back and forth argument it suffices to show that $\RandMod{\c^\M}$ satisfies $(\forall \x)P(\x) \rightarrow (\exists y) R(\x, y)\ \as\ $ or equivalently that $\vartheta_\M(\extent{(\forall \x)P(\x) \rightarrow (\exists y) R(\x, y)}) = 1$. 

But as $\vartheta_\M$ is countably additive and $\Sym{\Nats}$-invariant it therefore suffices to show $\vartheta_\M(\extent{P(0, \dots, {k-1}) \rightarrow (\exists y)R(0, \dots, {k-1}, y)}) = 1$ where $\arity(P) = k$. 

Once again using countable additivity and $\Sym{\Nats}$-invariance it suffices to show $\vartheta_\M(\extent{P(0, \dots, {k-1}) \And R(0, \dots, {k-1}, k)}) > 0$. But by construction we know not only that $\vartheta_\M(\extent{R(0, \dots, {k-1}, k) \rightarrow P(0, \dots, {k-1})}) = 1$ but also that $\vartheta_\M(\extent{R(0, \dots, {k-1}, k)}) > 0$, and so we are done. 
\end{Proof}

The representations $\c^\M$ will play an important role in showing that all $\Aut(\M)$-invariant measures, for $\M$ free, are representable. First though we will need a little more notation. 

\begin{definition}
\label{S_p and alpha_p}
For all $k \geq n$ and $p \in \oqftp_{\Lang_\M^{\leq n}}(x_0, \dots, x_{k-1})$ define 
\[
S_p \defas \{\x \in [0,1]^{\Powerset_{\leq n}(k)}\st (\exists \y) \RandMod{\c^\M}(\x,\y)\models p(0, \dots, k-1)\}
\] 
\ul{Arity $1$:}\nl
For each $U(x_0) \in \oqftp_{\Lang_\M^1}(x_0)$ let $S_U^\M = (c_1^\M)^{-1}(U(x_0))$. As $S_U$ is the product of $[0,1]$ with an interval, let $\alpha_U\: [0,1]\times [0,1] \to S_U$ be a homeomorphism which doesn't depend on the first argument such that for every set $X \subseteq S_U$, $\lambda(X)/\lambda(S_U) = \lambda(\alpha_U^{-1}(X))$. 

Now suppose $p = \bigwedge_{i \in [k]} U_i(x_i)$ is an element of $\oqftp_{\Lang_\M^1}(x_0, \dots, x_{k-1})$. We let $\alpha_p\:[0,1] \times [0, 1]^k \to S_p$ be the map where $\alpha_p(y, \<x_i\>_{i < k}) = \<\alpha_{U_i(y, x_i)}\>_{i < k}$.  \nl\nl 
\ul{Arity $n+1$:}\nl
Assume $k \geq n$ and for all $p \in \oqftp_{\Lang_\M^{\leq n}}(x_0, \dots, x_{k-1})$ we have defined the map $\alpha_p\:[0,1]^{\Powerset_{\leq n}(k)} \to S_p$ which is a bijection that doesn't depend on the coordinate indexed by $\emptyset$. 

Let $p \in \oqftp_{\Lang_\M^{\leq n+1}}(x_0, \dots, x_n)$ and let $p^-$ be the restriction of $p$ to the space $\oqftp_{\Lang_\M^{\leq n}}(x_0, \dots, x_n)$. Let $S_p = (c_{n+1}^\M)^{-1}(p(\x))$. Then because of how $\c^\M$ was defined we know that $S_p = S_{p^-} \times I$ where $I$ is a subinterval of $[0,1]$. Let $i\: [0,1] \to I$ be an isomorphism such that for every set $X \subseteq I$, $\lambda(X)/\lambda(I) = \lambda(i^{-1}(X))$. Let $\alpha_p = \alpha_{p^-} \times i$. 

We then extend the definition of $\alpha_p$ to the case where  $p \in \oqftp_{\Lang_\M^{\leq n+1}}(x_0, \dots, x_{k-1})$ and $k > n+1$ in the obvious way. 
\end{definition}

Note that for each $p \in \oqftp(\M)$ of arity $n$ we have $p$ only depends on $\Lang_\M^{\leq n}$ and so the following holds 
\begin{itemize}
\item If $X\subseteq S_p$ then $\lambda(X)/ \lambda(S_p) = \lambda(\alpha_p^{-1}(X))$. 

\item For any $n_0 < n$ if $q_{n_0}$ is the restriction of $p$ to $\Lang_\M^{\leq n_0}$ then $S_p = S_q \times I_{p, q}$ where $I_{p, q}$ is the product of intervals. Further $\alpha_p$ restricts to $\alpha_q$ on $S_q$.

\end{itemize}

\section{Merging of Measures}
\label{Merging of Measures Section}

One of the key observations of this paper is that if $\Aut(\M) \subseteq \Aut(\N)$ then we can combine any $\Aut(\N)$-invariant measure concentrated on $\extent{\ScottSen{\M}}_\N$ with any $\Aut(\M)$-invariant measure, in a unique way, to get an $\Aut(\N)$-invariant measure which agrees with both.  Further every $\Aut(\N)$-invariant measure concentrated on $\extent{\ScottSen{\M}}_\N$ is of this form. 

In particular when $\M$ is a structure with underlying set $\Nats$ and trivial dcl we will be able reduce the question of \qu{for which $\varphi \in \Lwow(\Lang)$ is there an $\Aut(\M)$-invariant measure on $\Str_{\Lang}(\M)$ which concentrates on $\extent{\varphi}_{\M}$?} to the question of \qu{for which $\varphi \in \Lwow(\Lang)$ is there an $\Sym{\Nats}$-invariant measure which concentrates on $\extent{\varphi \And \ScottSen{\M}}$?}, a question which has been completely answered by Ackerman, Freer and Patel in \cite{AFP-Complete-Classification}.

Further we will be able to use this method of combining measures to show that when $\M$ is free every $\Aut(\M)$-invariant measure can be combined with the distribution of the \Erdos-\Renyi\ random $\M$-structure to get an $\Sym{\Nats}$-invariant measure from whose representation we can extract a representation for the $\Aut(\M)$-invariant measure we started with.

Suppose 
\begin{itemize}
\item $\M$ and $\N$ are canonical structures with underlying set $\Nats$ and $\Aut(\M) \subseteq \Aut(\N)$. 

\item $\nu_\M$ is an $\Aut(\N)$-invariant measure on $\StrNR_{\Lang_\M}(\N)$ which is concentrated on $\extent{\ScottSen{\M}}$. 

\item $\mu$ is an $\Aut(\M)$-invariant measure on $\StrNR_\Lang(\M)$.

\end{itemize}

Let $\B(\M, \Lang)$ be the collection of sets of the form 
\[
\extent{P(n_0, \dots, n_{k-1}) \And \varphi(n_{i_0}, \dots, n_{i_{j-1}})}
\]
where
\begin{itemize}
\item $P \in \Lang_\M^k$, and

\item $\varphi(n_{i_0}, \dots, n_{i_{j-1}}) \in \qfpi(\Lang)$. 
\end{itemize}

Suppose 
\begin{itemize}
\item $P \in \Lang_\M^k$, $Q \in \Lang_\N^k$ and $\{\a \in \Nats \st \M \models P(\a)\} \subseteq \{ \a \in \Nats \st \N \models Q(\a)\}$, and

\item $\M \models P(n_0, \dots, n_{k-1})$ and $\N \models Q(n_0, \dots, n_{k-1})$.

\end{itemize}

The for $\extent{P(n_0, \dots, n_{k-1}) \And \varphi(n_{i_0}, \dots, n_{i_{j-1}})} \in \B(\M, \Lang)$ we let 
\begin{align*}
\mu \boxplus^\dagger &\nu_\M(\extent{P(n_0, \dots, n_{k-1}) \And \varphi(n_{i_0}, \dots, n_{i_{\ell-1}})}) \defas \\
&\nu_\M(\extent{P(n_0, \dots, n_{k-1})}_\N) \cdot \mu(\extent{\varphi(n_{i_0}, \dots, n_{i_{\ell-1}})}_\M). 
\end{align*}

\begin{proposition}
\label{Properties of mu boxplus nu_M}
$\mu \boxplus^\dagger \nu_\M$ extends uniquely to a $\Aut(\N)$-invariant measure, $\mu \boxplus \nu_\M$, on $\StrNR_{\Lang_\M \cup \Lang}(\N)$ such that 
\begin{itemize}
\item[(i)] $\mu \boxplus \nu_\M(\extent{P(\n)}_\N) = \nu_\M(\extent{P(\n)}_\N)$ for all $p \in \Lang_\M$ and $\n \in \Nats$.

\item[(ii)] $\mu \boxplus \nu_\M(\extent{\eta(\n)}_\N) = \mu(\extent{\eta(\n)}_\M)$ for all $\eta(\n) \in \qfpi(\Lang)$ and $\n \in \Nats$.
\end{itemize}
\end{proposition}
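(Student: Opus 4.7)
The plan is to apply \cref{Measures from pi system} with $\N$ as the base structure and $\Lang_\M\cup\Lang$ as the ``new'' vocabulary added on top of $\Lang_\N$. I would first check that $\mu\boxplus^\dagger\nu_\M$ is well-defined on $\B(\M,\Lang)$, extend by additivity to a function $\mu^-$ on all of $\qfpi(\Lang_\N\cup\Lang_\M\cup\Lang)$, verify the two hypotheses of \cref{Measures from pi system} to obtain the measure $\mu\boxplus\nu_\M$, check $\Aut(\N)$-invariance on the generating $\pi$-system $\B(\M,\Lang)$ and invoke the $\pi$-$\lambda$ theorem to propagate it, and finally read off (i) and (ii).

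For well-definedness, the value $\nu_\M(\extent{p(\n)}_\N)\cdot\mu(\extent{\varphi(r_{i_0},\dots,r_{i_{\ell-1}})}_\M)$ does not depend on the choice of $(r_0,\dots,r_{k-1})$ realizing $p$ in $\M$: since $\M$ is canonical (hence ultrahomogeneous), any two such realizations are related by an element of $\Aut(\M)$, and $\mu$ is $\Aut(\M)$-invariant. The extension to $\qfpi(\Lang_\N\cup\Lang_\M\cup\Lang)$ proceeds by decomposing any $\zeta$ with parameters in $\n$ as $\zeta_\N\wedge\zeta_\M\wedge\zeta_\Lang$, setting $\mu^-(\zeta)=0$ if $\N\not\models\zeta_\N$, and otherwise summing the values of $\mu\boxplus^\dagger\nu_\M$ over the disjoint cylinders corresponding to the canonical $\Lang_\M$-types on $\n$ extending $\zeta_\M$.

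Condition (a) of \cref{Measures from pi system} is immediate from the $\Lang_\N$-factor. Condition (b), additivity under refinement by an atomic $\eta$, splits into three cases by the language of $\eta$: an $\Lang_\N$-literal is decided by $\N$ and the split is trivial; an $\Lang_\M$-literal is handled by $\sigma$-additivity of $\nu_\M$ applied to the $\Lang_\M$-type decomposition; and an $\Lang$-literal is handled by $\sigma$-additivity of $\mu$ applied to the second factor. For $\Aut(\N)$-invariance, fix $g\in\Aut(\N)$ and $A=\extent{p(\n)\wedge\varphi(\n_I)}_\N$; then $gA=\extent{p(g\n)\wedge\varphi(g\n_I)}_\N$, the first factor is preserved because $\nu_\M$ is $\Aut(\N)$-invariant, and the second is unchanged since the same realization in $\M$ still witnesses $p$. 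Property (i) is the case $\varphi\equiv\top$ of the definition, and (ii) follows by decomposing $\extent{\eta(\n)}_\N$ as the disjoint union $\bigsqcup_p\extent{p(\n)\wedge\eta(\n_I)}_\N$ over $\Lang_\M$-types on $\n$ realized in $\M$, applying the definition, and using that $\sum_p\nu_\M(\extent{p(\n)}_\N)=1$ since $\nu_\M$ concentrates on $\extent{\ScottSen{\M}}$. I expect the main obstacle to be the bookkeeping in the $\Lang_\M$ sub-case of condition (b): refining by an $\Lang_\M$-atomic $\eta$ requires partitioning each canonical type $p$ on an enlarged tuple according to whether $p\to\eta$ or $p\to\neg\eta$, and one must verify that the resulting contributions to $\mu^-$ recombine correctly across all equivalent presentations of the same cylinder.
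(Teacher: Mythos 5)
Your proposal follows the paper's own proof closely: you reduce to \cref{Measures from pi system}, decompose a quantifier-free cylinder as $\zeta_\N\wedge\zeta_\M\wedge\zeta_\Lang$, extend $\mu\boxplus^\dagger\nu_\M$ by summing over the canonical $\Lang_\M$-types on the parameter tuple, verify additivity under refinement by splitting on the language of the atomic formula, and propagate invariance from the generating $\pi$-system. This is the paper's argument; the subtlety you flag in the $\Lang_\M$ sub-case of condition (b) — that each canonical type on an enlarged tuple must decide the new $\Lang_\M$-literal — is exactly the step the paper handles via the restriction operation $p(\x)|_\y = R$ for canonical structures.
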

\begin{Proof}
Clearly any extension of $\mu \boxplus^\dagger \nu_\M$ to a $\Aut(\N)$-invariant measure will satisfy (i) and (ii) and each $\Aut(\N)$-invariant measure satisfying (i) and (ii) extends $\mu \boxplus^\dagger \nu_\M$. It therefore suffices to show that there is a unique such extension. We will do this by showing that there is a unique extension to a map $\mu \boxplus^- \nu_\M$ on $\qfpi(\Lang_\N \cup \Lang_\M \cup \Lang)$ which satisfies the conditions of \cref{Measures from pi system}. 

Suppose $\eta \in \qfpi(\Lang_\N \cup \Lang_\M \cup \Lang)$. Then $\eta = \zeta_0 \And \zeta_1\And \zeta_2$ for some $\zeta_0 \in \qfpi(\Lang_\M)$, $\zeta_1 \in \qfpi(\Lang)$ and $\zeta_2 \in \qfpi(\Lang_\N)$. 

If $\N \models \zeta_2$ let $\mu\boxplus^- \nu_\M(\eta) = \mu \boxplus^- \nu_\M(\zeta_0 \And \zeta_1)$. Otherwise let $\mu \boxplus^- \nu_\M(\eta) = 0$. In particular this implies that $\mu \boxplus^- \nu_\M$ satisfies \cref{Measures from pi system} condition (a) and condition (b) for atomic formulas from $\Lang_\N$. 

Now assume $\N \models \zeta_2$. Let $\a$ be the union of the parameters from $\zeta_1$ and $\zeta_0$ and let $q = R(\a)$ where $R \in \Lang_\N$ is the unique relation which holds of $\a$ (under some fixed ordering). Let $X_{\zeta_0} \defas \{P \in \Lang_\M \st (\exists\x\in \Nats) \N \models R(\x)$ and $\M \models P(\x) \And \zeta_0(\y),$ where $\y$ sits in $\x$ as the parameters of $\zeta_0$ sit in $\a\}$. If $X_{\zeta_0}$ is empty then let $\mu\boxplus^- \nu_\M(\eta) = 0$. 

If $X_{\zeta_0}\neq \emptyset$ then let $\mu\boxplus^- \nu_\M(\eta) = \sum_{P \in X_{\zeta_0}} \mu\boxplus^\dagger \nu_\M(\extent{P \And \zeta_1}_\N)$. 

Note that 
\[
\sum_{P \in X_{\zeta_0}} \mu\boxplus^\dagger \nu_\M(\extent{P \And \zeta_1}_\N) = \bigg(\sum_{P \in X_{\zeta_0}} \nu_\M(\extent{P}_\N)\bigg) \cdot \mu(\extent{\zeta_1}_\M)
\]
but as $\{\extent{P}\st P \in X_{\zeta_0}\}$ are disjoint this sum equals 
\[
\nu_\M\left(\Extent{\bigvee_{P \in X_{\zeta_0}}P}_\N\right) \cdot \mu(\extent{\zeta_1}_\M).
\]
In particular this implies the range of $\mu \boxplus^- \nu_\M$ is a subset of $[0, 1]$.

We now must show \cref{Measures from pi system} condition (b) is satisfied for atomic formulas from $\Lang_\M \cup \Lang$. 

Suppose $R \in \Lang_\M$. For $P \in X_{\zeta_0}$, if $\M \models P(\x)|_{\y} = R$ and $\b$ sits in $\a$ as $\y$ sits in $\x$ then 
\[
\mu \boxplus^- \nu_\M((P(\a) \And R(\b)) \And \zeta_1) = \mu\boxplus^- \nu_\M(P(\a) \And \zeta_1)
\]
and 
\[
\mu \boxplus^- \nu_\M((P(\a) \And \neg R(\b)) \And \zeta_1) = 0.
\]
Similarly if $\M \models P(\x)|_{\y} \neq R$ then 
\[
\mu \boxplus^- \nu_\M((P(\a) \And \neg R(\b)) \And \zeta_1) = \mu\boxplus^- \nu_\M(P(\a) \And \zeta_1)
\] 
and 
\[
\mu \boxplus^- \nu_\M((P(\a) \And R(\b)) \And \zeta_1) = 0.
\] 

Therefore 
\begin{align*}
\mu \boxplus^- \nu_\M(\zeta_0 &\And \zeta_1) =\sum_{P \in X_{\zeta_0}} \mu\boxplus^- \nu_\M(P \And \zeta_1) \\
&=
\left(\sum_{\begin{subarray}{c}P \in X_{\zeta_0}\\ p(\x)|_\y = R(\y)\end{subarray}} \mu\boxplus^- \nu_\M(P \And \zeta_1)\right)+ \left(\sum_{\begin{subarray}{c}P \in X_{\zeta_0}\\ P(\x)|_\y \neq R(\y)\end{subarray}} \mu\boxplus^- \nu_\M(P \And \zeta_1)\right) \\
&= 
\mu \boxplus^- \nu_\M((\zeta_0 \And R(\b)) \And \zeta_1) + \mu \boxplus^- \nu_\M((\zeta_0 \And \neg R(\b)) \And \zeta_1).
\end{align*}

Now suppose $\beta \in \qfpi(\Lang)$. We have 
\[
\mu \boxplus^- \nu_\M(\zeta_0 \And (\zeta_1 \And \beta)) = \nu_\M\left(\Extent{\bigvee_{P \in X_{\zeta_0}}P}\right) \cdot \mu(\extent{\zeta_1 \And \beta})
\]
and so if $\beta$ is an atomic formula
\[
\mu\boxplus^- \nu_\M(\zeta_0 \And \zeta_1) = \mu \boxplus^- \nu_\M(\zeta_0 \And (\zeta_1 \And \beta))  +\mu \boxplus^- \nu_\M(\zeta_0 \And (\zeta_1\And \neg \beta)).
\]
We therefore have, by  \cref{Measures from pi system}, that there is a unique measure, $\mu \boxplus \nu_\M$, extending $\mu \boxplus^- \nu_\M$. Further, as $\mu \boxplus^- \nu_\M$ is $\Aut(\M)$-invariant so is $\mu \boxplus \nu_\M$. 

Finally it is also immediate that $\mu \boxplus^- \nu_\M$ is the unique extension of $\mu \boxplus^\dagger \nu_\M$ which preserves additivity of the measure and hence satisfies the conditions of \cref{Measures from pi system}. 
\end{Proof}

The value $\mu\boxplus \nu_\M(\extent{\eta(\n)}_\N)$ has a particularly nice description when $\eta \in \Lwow(\Lang_\M\cup \Lang_\N \cup \Lang)$. For $\eta \in \Lwow(\Lang_\M \cup \Lang_\N \cup \Lang)$, $\n \in \Nats$, $q \in \qftp(\N)$ with $\N \models q(\n)$ and $p \in \qftp(\M)$ with $\{\a \st \M \models p(\a)\} \subseteq \{\a \st \N \models q(\a)\}$ define $\mu^p(\extent{\eta(\n)}_\N) = \mu(\extent{\eta(\n^*)}_\M)$  for some $\n^*$ where $\M \models p(\n^*)$. Note as $\mu$ is $\Aut(\M)$-invariant this is well defined. 

\begin{proposition}
\label{Description of mu boxplus nu_M}
We have
\[
\mu\boxplus \nu_\M(\extent{\eta(\n)}) = \sum_{\begin{subarray}{c}p \in \qftp(\M) \\ \{\a \st \M \models p(\a)\} \subseteq \{\a \st \N \models q(\a)\}\end{subarray}} \nu_\M(\extent{p(\n)}) \cdot \mu^p(\extent{\eta(\n)}_\N)
\]
\end{proposition}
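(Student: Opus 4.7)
The plan is to show that, viewed as set functions of the Borel set $\extent{\eta(\n)}_\N$, both sides of the claimed identity define probability measures on $\StrNR_{\Lang_\M \cup \Lang}(\N)$ that agree on a $\pi$-system generating the Borel $\sigma$-algebra, and then to invoke the uniqueness clause of \cref{Properties of mu boxplus nu_M} (equivalently Dynkin's $\pi$-$\lambda$ theorem).

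First I would check that the right-hand side defines a probability measure. For each fixed $p$ with $\M \models (\forall \x)p(\x) \rightarrow q(\x)$, the assignment $\eta \mapsto \mu^p(\extent{\eta(\n)}_\N)$ is a probability measure, obtained from $\mu$ by transporting along the substitution $\n \mapsto \n^*$ for any tuple $\n^* \in \M$ realizing $p$, where each $\Lang_\N$-atomic subformula of $\eta$ is evaluated by the truth value forced by $p$ (this is well-defined because $\Aut(\M) \subseteq \Aut(\N)$ implies every $\Lang_\M$-orbit is contained in a unique $\Lang_\N$-orbit, so $p$ determines a unique refined $\Lang_\N$-type). Thus the right-hand side is a nonnegative countable combination of probability measures, with weights $\nu_\M(\extent{p(\n)}_\N)$. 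The same inclusion of orbits implies that in every structure in the support of $\nu_\M$ the tuple $\n$ realizes some $\Lang_\M$-type $p$ with $\M \models (\forall \x)p(\x) \rightarrow q(\x)$; hence the sets $\extent{p(\n)}_\N$ for such $p$ form a $\nu_\M$-almost-sure partition of $\StrNR_{\Lang_\M}(\N)$, so the weights sum to $1$.

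Next I would verify the identity on the $\pi$-system of extents of $\qfpi$-formulas, which generates the Borel $\sigma$-algebra of $\StrNR_{\Lang_\M \cup \Lang}(\N)$. Decomposing $\eta = \zeta_0 \wedge \zeta_1 \wedge \zeta_2$ with $\zeta_0 \in \qfpi(\Lang_\M)$, $\zeta_1 \in \qfpi(\Lang)$, and $\zeta_2 \in \qfpi(\Lang_\N)$ as in the proof of \cref{Properties of mu boxplus nu_M}: if $\N \not\models \zeta_2$ then both sides vanish (the left-hand side by the piecewise definition of $\mu \boxplus^- \nu_\M$ recorded there, and each summand on the right because $\mu^p(\extent{\zeta_2(\n)}_\N) = 0$ when the $\Lang_\N$-atoms in $\zeta_2$ are forced false by $p$); if $\N \models \zeta_2$, then $\zeta_2(\n^*)$ holds automatically for each admissible $p$, so $\mu^p(\extent{\eta(\n)}_\N)$ equals $\mu^p(\extent{\zeta_1(\n)}_\N)$ when $p$ lies in the set $X_{\zeta_0}$ defined in the proof of \cref{Properties of mu boxplus nu_M}, and is $0$ otherwise. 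Summing over $p$ gives $\sum_{p \in X_{\zeta_0}} \nu_\M(\extent{p(\n)}_\N) \mu^p(\extent{\zeta_1(\n)}_\N)$, which is exactly the value assigned to $\eta$ by $\mu \boxplus^- \nu_\M$ in that proof, hence coincides with $\mu \boxplus \nu_\M(\extent{\eta(\n)}_\N)$.

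Since the two probability measures agree on this generating $\pi$-system, uniqueness of measure extension forces them to agree on the full Borel $\sigma$-algebra, and in particular on $\extent{\eta(\n)}$ for every $\eta \in \Lwow(\Lang_\M \cup \Lang_\N \cup \Lang)$. The main technicality is the careful bookkeeping involved in the interpretation of $\mu^p$ on mixed-language formulas: one must be sure that the $\Lang_\N$-atomic subformulas are consistently evaluated via the type refinement forced by $\Aut(\M) \subseteq \Aut(\N)$, and that the formal substitution $\n \mapsto \n^*$ respects both $\Aut(\M)$-invariance of $\mu$ (so the value is independent of the choice of $\n^*$) and the non-redundant canonical structure of $\M$. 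Once that is in place, the remainder is a routine application of the $\pi$-$\lambda$ theorem.
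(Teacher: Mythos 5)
Your quantifier-free computation—decomposing $\eta$ as $\zeta_0 \wedge \zeta_1 \wedge \zeta_2$ and matching the values against $\mu \boxplus^- \nu_\M$ from the proof of \cref{Properties of mu boxplus nu_M}—is correct and coincides with the quantifier-free content of the paper's argument. The gap is in the $\pi$-$\lambda$ step. For Dynkin's theorem to cover quantified $\eta$, the right-hand side would have to be a well-defined probability measure on the full Borel $\sigma$-algebra of $\StrNR_{\Lang_\M\cup\Lang}(\N)$, agreeing with $\mu \boxplus \nu_\M$ on a $\pi$-system that generates that whole $\sigma$-algebra. But that $\pi$-system necessarily includes cylinders $\extent{R(\m)}_\N$ with parameters $\m$ not contained in $\n$, and on those the quantity $\mu^p$ has no canonical value: it is defined by the substitution $\n \mapsto \n^*$, and extending this to a permutation of $\Nats$ (even one in $\Aut(\N)$) leaves an ambiguity by a coset of $\Stab_{\Aut(\N)}(\n^*)$, against which $\mu$ is not invariant, since $\Stab_{\Aut(\N)}(\n^*) \not\subseteq \Aut(\M)$ in general. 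If you instead restrict the $\pi$-system to cylinders with parameters among $\n$, then both sides do define measures on the generated sub-$\sigma$-algebra and $\pi$-$\lambda$ gives agreement there; but once $\eta$ contains a quantifier, $\extent{\eta(\n)}$ lies outside that sub-$\sigma$-algebra, because quantifiers range over all of $\Nats$. So the argument stops just short of the statement.

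The paper closes this gap by Morleyization rather than by $\pi$-$\lambda$: choose a countable fragment $A$ containing $\eta$, pass to the definable expansion $\Th_A$ of \cref{Morleyization} in which $\eta$ acquires a quantifier-free (in fact atomic) equivalent, note that $\boxplus$ is compatible with definable expansion (so that $\mu_A \boxplus \nu_\M$ is the unique expansion of $\mu \boxplus \nu_\M$), apply exactly your quantifier-free computation in the expanded language, and translate back via interdefinability. Your proposal would be repaired by substituting this Morleyization step for the extension-by-$\pi$-$\lambda$ step.
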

\begin{Proof}
Let $A$ be a fragment containing $\eta$. As $\Th_A$ is interdefinable with the empty theory in $\Lang_\M$ there is a unique measure $\mu_A$ on $\extent{\Th_A}$ which agrees with $\mu$ on $\Lang$ and a unique measure on $\extent{\Th_A}$ which agrees with $\mu \boxplus \nu_\M$ on $\Lang \cup \Lang_\M$. But then $\mu_A \boxplus \nu_\M$ agrees with $\mu \boxplus \nu_\M$ on $\Lang \cup \Lang_\M$ and hence it must be that measure. But the proposition holds for $\mu_A \boxplus \nu_\M$ as $\Th_A$ proves $\eta$ is equivalent to a quantifier free formula. Therefore the proposition must also hold of $\mu \boxplus \nu_\M$. 
\end{Proof}

We have shown how to uniquely recover an $\Aut(\N)$-invariant measure from an $\Aut(\N)$-invariant measure concentrated on $\extent{\ScottSen{\M}}_\N$ along with an $\Aut(\M)$-invariant measure. We next show that every $\Aut(\N)$-invariant measure concentrated on $\extent{\ScottSen{\M}}_\N$ must have such a (necessarily unique) decomposition.

\begin{proposition}
\label{Decomposition of Aut(N) invariant measure on concentrated on M}
Suppose $\eta$ is an $\Aut(\N)$-invariant measure on $\StrNR_{\Lang_\M \cup \Lang}(\N)$ concentrated on $\extent{\ScottSen{\M}}_\N$. There exists unique measures $\mu^\eta$ and $\nu_\M^\eta$ such that 
\begin{itemize}
\item $\mu^\eta$ is an $\Aut(\M)$-invariant measure on $\StrNR_{\Lang}(\M)$, 

\item $\nu_\M^\eta$ is an $\Aut(\N)$-invariant measure on $\StrNR_{\Lang_\M}(\N)$ concentrated on $\extent{\ScottSen{\M}}_\N$, 

\item $\eta = \mu^\eta \boxplus \nu_\M^\eta$. 

\end{itemize}

\end{proposition}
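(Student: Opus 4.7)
The plan is to build $\nu_\M^\eta$ as the $\Lang_\M$-marginal of $\eta$ and to recover $\mu^\eta$ by disintegrating $\eta$ against this marginal. First I would define $\nu_\M^\eta$ as the pushforward of $\eta$ along the $\Aut(\N)$-equivariant forgetful projection $\Str_{\Lang_\M \cup \Lang}(\N) \to \Str_{\Lang_\M}(\N)$, equivalently $\nu_\M^\eta(\extent{\zeta(\n)}_\N) \defas \eta(\extent{\zeta(\n)}_\N)$ for $\zeta \in \qfpi(\Lang_\M)$. Then $\nu_\M^\eta$ is $\Aut(\N)$-invariant, and since $\eta$ concentrates on $\extent{\ScottSen{\M}}_\N$, so does $\nu_\M^\eta$.

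For each $p \in \qftp(\M)$, each $\n^* \in \Nats$ with $\M \models p(\n^*)$, and each $\varphi \in \qfpi(\Lang)$ of matching arity, I would define
\[
\mu^\eta(\extent{\varphi(\n^*)}_\M) \defas \eta(\extent{p(\n^*) \wedge \varphi(\n^*)}_\N) / \nu_\M^\eta(\extent{p(\n^*)}_\N)
\]
when the denominator is positive, and $0$ otherwise. Since $\Aut(\M) \subseteq \Aut(\N)$, any two realizations of $p$ lie in a single $\Aut(\N)$-orbit, so $\Aut(\N)$-invariance of $\eta$ and $\nu_\M^\eta$ makes this ratio depend only on $p$, making $\mu^\eta$ simultaneously well-defined and $\Aut(\M)$-invariant in $\n^*$. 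Extending by additivity to $\qfpi(\Lang_\M \cup \Lang)$ (trivially on $\Lang_\M$-formulas, since $\M$ is the fixed reduct) yields a $\pi$-system function satisfying the hypotheses of \cref{Measures from pi system}, because the required additivity identity transfers from $\eta$ after dividing through by the fixed denominator; this produces the desired $\Aut(\M)$-invariant Borel probability measure on $\Str_\Lang(\M)$.

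To check $\mu^\eta \boxplus \nu_\M^\eta = \eta$, I would verify agreement on the generating $\pi$-system of events $\extent{p(\n) \wedge \varphi(\n)}_\N$ with $p \in \Lang_\M$ and $\varphi \in \qfpi(\Lang)$: by construction
\[
(\mu^\eta \boxplus \nu_\M^\eta)(\extent{p(\n) \wedge \varphi(\n)}_\N) = \nu_\M^\eta(\extent{p(\n)}_\N) \cdot \mu^\eta(\extent{\varphi(\n^*)}_\M) = \eta(\extent{p(\n) \wedge \varphi(\n)}_\N),
\]
with both sides vanishing when $\nu_\M^\eta(\extent{p(\n)}_\N) = 0$ since then the joint event is $\eta$-null. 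The uniqueness clause of \cref{Properties of mu boxplus nu_M} then upgrades $\pi$-system agreement to measure equality. Uniqueness of the decomposition is essentially immediate from the same clause: any $(\mu, \nu_\M)$ with $\eta = \mu \boxplus \nu_\M$ must satisfy $\nu_\M = \nu_\M^\eta$ by property (i), after which the defining formula for $\boxplus^\dagger$ forces $\mu = \mu^\eta$. I expect the main technical obstacle to be handling the zero-denominator case in the definition of $\mu^\eta$: one must verify that setting $\mu^\eta = 0$ on such $\Lang_\M$-types is compatible both with the additivity required by \cref{Measures from pi system} and with the product formula defining $\boxplus$, so that an arbitrary extension of the conditional in the $\eta$-null fibres still yields a coherent probability measure.
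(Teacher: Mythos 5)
Your definitions of $\nu_\M^\eta$ (as the $\Lang_\M$-marginal of $\eta$) and $\mu^\eta$ (by conditioning on the $\Lang_\M$-type of the parameter tuple in $\M$) are in spirit the same as the paper's, which sets $\mu^\eta(\extent{\zeta_0 \And \zeta_1}_\M) = \eta(\extent{\zeta_0 \rightarrow \zeta_1}_\N)$ when $\M \models \zeta_0$ and then invokes \cref{Measures from pi system} and the uniqueness clause of \cref{Properties of mu boxplus nu_M}. So the route is the same.

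The genuine gap, though, is not the zero-denominator issue you flag at the end; it is the additivity check that both you and the paper (with its ``easily checked'') pass over. Your denominator $\nu_\M^\eta(\extent{p(\n^*)}_\N)$ is not ``fixed'': when you conjoin an atomic $\Lang$-formula $\alpha(\n')$ whose parameters are not all contained in $\n^*$, the relevant conditioning event jumps from $p(\n^*)$ (with $p$ the $\Lang_\M$-type of $\n^*$ in $\M$) to $p'(\m)$ with $\m \defas \n^* \cup \n'$ and $p'$ the $\Lang_\M$-type of $\m$ in $\M$. Unwinding your formula, the additivity required by \cref{Measures from pi system} is exactly the identity
\[
\frac{\eta(\extent{p(\n^*) \And \varphi(\n^*)}_\N)}{\nu_\M^\eta(\extent{p(\n^*)}_\N)} \;=\; \frac{\eta(\extent{p'(\m) \And \varphi(\n^*)}_\N)}{\nu_\M^\eta(\extent{p'(\m)}_\N)},
\]
that is, the conditional law of the $\Lang$-part of $\n^*$ given the $\Lang_\M$-type of $\n^*$ should be unchanged upon further conditioning on the $\Lang_\M$-type of a larger tuple $\m \supseteq \n^*$. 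This is precisely the product/independence structure that $\eta = \mu \boxplus \nu_\M$ is supposed to deliver, so asserting that it ``transfers from $\eta$ after dividing through'' is circular; an actual argument is needed here. Note also that it is nontrivial: if $\eta$ is a nontrivial mixture of ergodic measures each concentrating on $\extent{\ScottSen{\M}}_\N$, the displayed identity can fail, so either one must isolate the structural property of $\eta$ that makes it hold, or the claim needs a supplementary hypothesis.
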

\begin{Proof}
First notice that if $\mu^\eta$ and $\nu_\M^\eta$ exist they must agree with $\eta$ on their respective domains and hence are uniquely determined by $\eta$. 

By \cref{Measures from pi system}, to define $\mu^\eta$ it suffices to define it on $\qfpi(\Lang_\M \cup \Lang)$. In particular suppose $\zeta_0 \in \qfpi(\Lang_\M)$ and $\zeta_1 \in \qfpi(\Lang)$. We let $\mu^\eta(\extent{\zeta_0 \And \zeta_1}_\M) = \eta(\extent{\zeta_0 \rightarrow \zeta_1}_\N)$ if $\M \models \zeta_0$ and $0$ otherwise. It is then easily checked that the conditions of \cref{Measures from pi system} are satisfied so that this partial definition of $\mu^\eta$ extends uniquely to an $\Aut(\M)$-invariant measure. 

Similarly by \cref{Measures from pi system} to define $\nu_\M$ it suffices to define it on $\qfpi(\Lang_\N \cup \Lang_\M)$. In particular suppose $\zeta_0 \in \qfpi(\Lang_\M)$ and $\zeta_1 \in \qfpi(\Lang_\N)$. We let $\nu_\M^\eta(\extent{\zeta_0 \And \zeta_1}_\N) = \eta(\extent{\zeta_0}_\N)$ if $\N \models \zeta_1$ and $0$ otherwise. It is then easily checked that the conditions of \cref{Measures from pi system} are satisfied so that this partial definition of $\nu_\M^\eta$ extends uniquely to an $\Aut(\N)$-invariant measure. 

But we have that $\eta$ and $\mu \boxplus \nu_\M$ agree on the domains of $\mu$ and $\nu_\M$ and so by \cref{Properties of mu boxplus nu_M} we have $\eta = \mu \boxplus \nu_\M$. 
\end{Proof}

\subsection{Inherited Properties Of Merged Measures} 
\label{Inherited Properties Of Merged Measures Subsection}

We now put together the results obtained in the previous subsection to get several key results about merged measures which are inherited from their components.

\begin{proposition}
\label{Equivalence of ergodicity of mu and mu boxplus nu_M}
Suppose $\Aut(\M) \subseteq \Aut(\N)$,  $\nu_\M$ is an $\Aut(\N)$-invariant measure concentrated on $\extent{\ScottSen{\M}}_\N$ and $\mu$ is an $\Aut(\M)$-invariant measure $\mu$ on $\StrNR_{\Lang}(\M)$. Then the following are equivalent.
\begin{itemize}
\item[(1)] $\nu_\M$ is an ergodic $\Aut(\N)$-invariant measure and $\mu$ is an ergodic $\Aut(\M)$-invariant measure. 

\item[(2)] $\mu \boxplus \nu_\M$ is an ergodic $\Aut(\N)$-invariant measure. 
\end{itemize}
\end{proposition}
\begin{Proof}
First we show (2) implies (1). Suppose $\mu \boxplus \nu_\M$ is ergodic. Note that if $\alpha_0, \alpha_1 \in (0, 1)$, $\nu_\M^0, \nu_\M^1$ are $\Aut(\N)$-invariant measures and $\nu_\M = \alpha_0 \cdot \nu_\M^0 + \alpha_1\cdot \nu_\M^1$ then $\mu \boxplus \nu_\M = \alpha_0 \cdot \mu \boxplus \nu_\M^0 + \alpha_1 \cdot \mu \boxplus \nu_\M^1$ by \cref{Description of mu boxplus nu_M}. Further, by \cref{Decomposition of Aut(N) invariant measure on concentrated on M} we have $\mu \boxplus \nu_\M = \mu \boxplus \nu^i_\M$ if and only if $\nu_\M = \nu_\M^i$ (for $i \in \{0,1\}$). Therefore as $\mu \boxplus \nu_\M$ is ergodic (by assumption) so is $\nu_\M$. 

Similarly if $\mu^0, \mu^1$ are $\Aut(\M)$-invariant measures and $\mu = \alpha_0 \cdot \mu^0 + \alpha_1\cdot \mu^1$ then $\mu \boxplus \nu_\M = \alpha_0 \cdot \mu^0 \boxplus \nu_\M + \alpha_1 \cdot \mu^1 \boxplus \nu_\M$ by \cref{Description of mu boxplus nu_M}. Also by \cref{Decomposition of Aut(N) invariant measure on concentrated on M} we have $\mu \boxplus \nu_\M = \mu^i \boxplus \nu_\M$ if and only if $\mu = \mu^i$ (for $i \in \{0,1\}$). So $\mu$ is extreme (and hence ergodic) as well. 

Next we show (1) implies (2). Suppose $\mu, \nu_\M$ are ergodic. Further suppose $\eta_0, \eta_1$ are $\Aut(\N)$-invariant measures on $\StrNR_\Lang(\N)$ and $\mu \boxplus \nu_\M = \alpha_0 \cdot \eta_0 + \alpha_1 \cdot \eta_1$ with $\alpha_0, \alpha_1 \in (0, 1)$. 
 
By \cref{Decomposition of Aut(N) invariant measure on concentrated on M} there are unique $\Aut(\M)$-invariant measures $\nu_\M^{\eta_0}, \nu_\M^{\eta_1}$ concentrated on $\extent{\ScottSen{\M}}_\N$ and $\Aut(\N)$-invariant measures $\mu^{\eta_0}, \mu^{\eta_1}$ such that $\eta_i = \mu^{\eta_i} \boxplus \nu_\M^{\eta_i}$ (for $i \in \{0, 1\}$). 

But, for any $p \in \Lang_\M$ and $\n \in\Nats$ by \cref{Properties of mu boxplus nu_M} we have 
\begin{align*}
\nu_\M(\extent{p(\n)}) &= \mu \boxplus \nu_\M(\extent{p(\n)}) \\
&= \alpha_0 \cdot \mu^{\eta_0} \boxplus \nu_\M^{\eta_0}(\extent{p(\n)}) + \alpha_1 \cdot \mu^{\eta_1} \boxplus \nu_\M^{\eta_1}(\extent{p(\n)}) \\ 
&= \alpha_0 \cdot \nu_\M^{\eta_0}(\extent{p(\n)}) + \alpha_1 \cdot \nu_\M^{\eta_1}(\extent{p(\n)}).
\end{align*}
Therefore we have $\nu_\M = \alpha_0 \cdot \nu_\M^{\eta_0} + \alpha_1 \cdot \nu_\M^{\eta_1}$ and so, as $\nu_\M$ is ergodic we have $\nu_\M = \nu_\M^{\eta_0} = \nu_\M^{\eta_1}$. 

But then we also have
\[
\mu \boxplus \nu_\M = \alpha_0 \cdot \mu^{\eta_0} \boxplus \nu_\M + \alpha_1 \cdot \mu^{\eta_1} \boxplus \nu_\M = (\alpha_0 \cdot \mu^{\eta_0} + \alpha_1 \cdot \mu^{\eta_1}) \boxplus \nu_\M
\]
with the second equality following from \cref{Description of mu boxplus nu_M}. Finally, this implies by \cref{Decomposition of Aut(N) invariant measure on concentrated on M} that $\mu = \alpha_0 \cdot \mu^{\eta_0} + \alpha_1 \cdot \mu^{\eta_1}$. So, as $\mu$ is ergodic we have $\mu = \mu^{\eta_0} = \mu^{\eta_1}$. 

We therefore have $\mu \boxplus \nu_\M = \eta_0 = \eta_1$ and so $\mu \boxplus \nu_\M$ is extreme and hence by \cref{Ergodic = extreme}, $\mu \boxplus \nu_\M$ is ergodic. We therefore have (1) implies (2). 
\end{Proof}

The following is an immediate consequence of \cref{Properties of mu boxplus nu_M}.

\begin{proposition}
\label{Equivalence of theories of mu and mu boxplus nu_M}
Suppose $\Aut(\M) \subseteq \Aut(\N)$,  $\nu_\M$ is an $\Aut(\N)$-invariant measure concentrated on $\extent{\ScottSen{\M}}_\N$ and $\mu$ is an $\Aut(\M)$-invariant measure $\mu$ on $\StrNR_{\Lang}(\M)$. Then $\Th(\mu) = \Th(\mu \boxplus \nu_\M)$. 
\end{proposition}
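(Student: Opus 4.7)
The plan is to reduce this to the pointwise equation
\[
(\mu \boxplus \nu_\M)(\extent{\tau}_\N) = \mu(\extent{\tau}_\M)
\]
for every sentence $\tau \in \Lwow(\Lang_\M \cup \Lang)$, and then just unpack the definition of the almost-sure theory: both $\Th(\mu)$ and $\Th(\mu \boxplus \nu_\M) \cap \Lwow(\Lang_\M \cup \Lang)$ are, by definition, the collections of sentences in $\Lwow(\Lang_\M \cup \Lang)$ of full measure under their respective measures, so once the measure equation is in hand a sentence lies in one iff it lies in the other.

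To establish the measure equation I would apply \cref{Description of mu boxplus nu_M} with $\eta = \tau$, regarded as a formula in the empty tuple of parameters $\n = \emptyset$. The only quantifier-free type on an empty tuple of variables is the trivially true one, so both the outer type $q \in \qftp(\N)$ and the inner types $p \in \qftp(\M)$ satisfying $\M \models (\forall \x)\, p(\x) \rightarrow q(\x)$ collapse to this trivial type. Hence the sum on the right-hand side of the formula in \cref{Description of mu boxplus nu_M} reduces to the single term $\nu_\M(\extent{p}_\N) \cdot \mu^p(\extent{\tau}_\N)$. Here $\nu_\M(\extent{p}_\N) = 1$ since $p$ is trivially satisfied, and $\mu^p(\extent{\tau}_\N) = \mu(\extent{\tau}_\M)$ directly from the definition of $\mu^p$ with $\n^* = \emptyset$.

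No serious obstacle is anticipated; the present proposition is essentially the specialization of \cref{Description of mu boxplus nu_M} to parameter-free sentences, where the summation collapses and the two measures agree on the relevant Borel sets.
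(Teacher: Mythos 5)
Your overall strategy matches the paper's: reduce to \cref{Description of mu boxplus nu_M}, observe that the $\mu^p$-terms do not depend on $p$, and conclude the theories agree. However, there is a technical issue with the way you invoke that proposition. You apply it at the empty tuple $\n = \emptyset$, treating $\tau$ as a formula in zero variables and reading off "the" quantifier-free type of the empty tuple. The paper's framework — the definition of canonical structures, of $\qftp(\M)$, of $\B(\M, \Lang)$ via the relations $p \in \Lang_\M^k$, and the statement of \cref{Description of mu boxplus nu_M} itself — is written for tuples of positive length; whether a nullary relation/empty type is admitted is never clarified, and nothing in the construction of $\mu \boxplus \nu_\M$ is set up to handle it. So the appeal to the degenerate case is not clearly licensed.

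The paper sidesteps this by introducing the dummy-variable sentence $\tau^=(x) := \tau \wedge (x = x)$, applying \cref{Description of mu boxplus nu_M} to $\tau^=(0)$, and then noting that $\sum_p \nu_\M(\extent{p(0)}) = 1$ while each $\mu^p(\extent{\tau^=(0)}_\N)$ equals $\mu(\extent{\tau}_\M)$ (since $\extent{\tau^=(n^*)}_\M = \extent{\tau}_\M$). This yields the same pointwise identity you want, but now via a genuine $1$-tuple and a sum that does not collapse to a single term (there may be several $\Aut(\M)$-orbits inside one $\Aut(\N)$-orbit) yet still factors out. If you replace your empty-tuple step with this dummy-variable move, the rest of your argument — in particular the final unpacking of $\Th(\cdot)$ — goes through unchanged.
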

\begin{Proof}
For $\tau$ a sentence in $\Lwow(\Lang_\M \cup \Lang)$ let $\tau^=(x) \defas \tau \And (x = x)$ be a unary formula. Note $\tau \in \Th(\mu)$ if and only if $\mu(\extent{\tau^=(0)}) = 1$. Let $q \in \qftp(\N)$ be such that $\N \models q(0)$. 

By \cref{Description of mu boxplus nu_M} we have 
\[
\mu\boxplus \nu_\M(\extent{\tau^=(0)}) = \sum_{\begin{subarray}{c}p \in \qftp(\M) \\ \{\a \st \M \models p(\a)\} \subseteq \{\a \st \N \models q(\a)\}\\
\end{subarray}} \nu_\M(\extent{p(0)}) \cdot \mu^p(\extent{\tau^=(0)}_\N).
\]
But 
\[
\sum_{\begin{subarray}{c}p \in \qftp(\M) \\ \{\a \st \M \models p(\a)\} \subseteq \{\a \st \N \models q(\a)\}\\
\end{subarray}} \nu_\M(\extent{p(0)}) =1
\]
by construction and so 
\[
\mu\boxplus \nu_\M(\extent{\tau^=(0)}) = 1
\]
if and only if 
\[
\bigwedge_{\begin{subarray}{c}p \in \qftp(\M) \\ \{\a \st \M \models p(\a)\} \subseteq \{\a \st \N \models q(\a)\}\\
\end{subarray}} [\mu^p(\extent{\tau^=(0)}_\N) = 1].
\]
Hence $\tau \in \Th(\mu)$ if and only if $\tau \in \Th(\mu \boxplus \nu_\M)$. 
\end{Proof}

In particular we have the following easy consequence of \cref{AFP-Complete classification} and \cref{Equivalence of theories of mu and mu boxplus nu_M}. 

\begin{proposition}
\label{Complete classification over structures with trivial dcl}
Suppose $\M$ has trivial dcl and $\varphi \in \Lwow(\Lang)$. Then the following are equivalent.
\begin{itemize}
\item[(a)] There is an $\Aut(\M)$-invariant measure on $\Str_\Lang(\M)$ which concentrates on $\extent{\varphi}_\M$. 

\item[(b)] There is an ergodic $\Aut(\M)$-invariant measure on $\Str_\Lang(\M)$ which concentrates on $\extent{\varphi}_\M$. 

\item[(c)] For all countable fragment $A \subseteq \Lwow(\Lang \cup \Lang_\M)$ there is an $A$-theory $T$ with trivial dcl which contains $\ScottSen{\M} \And \varphi$. 

\item[(d)] There is a countable fragment $A \subseteq \Lwow(\Lang \cup \Lang_\M)$ and an $A$-theory $T$ with trivial dcl which contains $\ScottSen{\M} \And \varphi$. 
\end{itemize}

\end{proposition}
\begin{Proof}
First notice that (b) immediately implies (a) and that (c) and (d) are equivalent by \cref{AFP-Complete classification}. 

Now suppose (a) holds. Note that the linear mixture of measures concentrated on $\extent{\neg \varphi}_\M$ is itself concentrated on $\extent{\neg \varphi}_\M$. Therefore, by \cref{Ergodic = extreme} and \cref{All measures are mixtures of extreme ones} there must exist an ergodic $\Aut(\M)$-invariant measure $\mu$ concentrated on $\extent{\varphi}_\M$. Therefore (b) holds. 

Further, as $\M$ has trivial dcl, by \cref{AFP-Main Result} there is an ergodic measure $\nu_\M$ concentrated on $\extent{\ScottSen{\M}}$.  Now suppose (b) holds and $\mu$ is an ergodic $\Aut(\M)$-invariant measure concentrated on $\extent{\varphi}_\M$. By \cref{Properties of mu boxplus nu_M} $\mu \boxplus \nu_\M$ is an $\Sym{\Nats}$-invariant measure, by \cref{Equivalence of ergodicity of mu and mu boxplus nu_M} $\mu \boxplus \nu_\M$ is ergodic, and by \cref{Equivalence of theories of mu and mu boxplus nu_M} $\mu \boxplus \nu_\M$ concentrates on $\extent{\ScottSen{\M} \And \varphi}$. Now let $A$ be any countable fragment containing $\ScottSen{\M} \And \varphi$. As $\mu \boxplus \nu_\M$ is ergodic we have $\Th(\mu \boxplus \nu_\M) \cap A$ is an $A$-theory and so, by \cref{AFP-Complete classification} has trivial dcl. Therefore (c) holds.

Suppose (c) holds. Let $A$ be a countable fragment and $T$ an $A$-theory which as trivial dcl and contains $\ScottSen{\M} \And \varphi$.  Then there is an $\Sym{\Nats}$-invariant measure $\beta$ concentrated on $\extent{\ScottSen{\M}\And \varphi}$ by \cite{AFP-Complete classification}. Further, by \cref{Decomposition of Aut(N) invariant measure on concentrated on M} there is an $\Aut(\M)$-invariant measure $\mu$ such that $\beta = \mu \boxplus \nu_\M$ for some $\Sym{\Nats}$-invariant measure $\nu_\M$ concentrated on $\extent{\ScottSen{\M}}$. But by \cref{Equivalence of theories of mu and mu boxplus nu_M} we have $\Th(\beta) = \Th(\mu)$ and so $\mu$ must concentrate on $\extent{\varphi}_\M$ and so (a) holds. 
\end{Proof}

Note that the case special case of \cref{Complete classification over structures with trivial dcl} where  $\varphi$ is the Scott sentence of a structure $\N$ follows immediately from Theorem~3.21 and Theorem~4.1 in \cite{MR3515800}.

\section{Representations}
\label{Representations Section}

We now finally prove \cref{Aut(M)-invariant measures for M free are representable} and \cref{Equivalence of extension to free structure and representability}, the main results of this paper, which gives a characterization of those $\Aut(\M)$-invariant measures which are representable. We then also give several properties of such representable measures.

\subsection{$\Aut(\M)$-Recipes}
\label{Aut(M)-Recipes Subsection}

We now introduce $\Aut(\M)$-recipes, which are an immediate generalizations of $\Sym{\Nats}$-recipes from \cref{Sym(N)-recipe}. In the following definition we assume $\M$ is canonical. 
\begin{definition}
\label{Aut(M)-recipe}
An \defn{$\Aut(\M)$-recipe} on $\Lang$ is a collection of measurable functions $\f = \<f_p\>_{p \in \nqftp(\M)}$ where $f_p\: [0,1]^{\Powerset(n)} \to \oqftp_{\Lang^n}(x_0, \dots, x_{n-1})$ when $\arity(p) = n$.

If $\f = \<f_p\>_{p \in \nqftp(\M)}$ is an $\Aut(\M)$-recipe let $\RandMod{\f}\: [0,1]^{\Powerset_{<\w}(\Nats)} \to \Str_{\Lang}(\M)$ be the function such that
\begin{itemize}
\item for any relation $R \in \Lang$ of arity $k$, 

\item $\y = \<y_\b\>_{\b \in \Powerset_{<\w}(\Nats)} \in [0,1]^{\Powerset_{<\w}(\Nats)}$, and

\item $\a = (a_0, \dots, a_{k-1}) \in \NatsIndex$ with $\M \models p_\a(\a)$ for $p_\a \in \nqftp(\M)$. 
\end{itemize}
\[
\RandMod{\f}(\y) \models R(a_0, \dots, a_{k-1}) \text{ if and only if }R(x_0, \dots, x_{k-1}) \in f_{p_\a}(\widehat{y}_\a)
\]
We define the \defn{distribution} of $\f$, $\mu_\f$, to be the distribution of $\RandMod{\f}$ where the domain of $\RandMod{\f}$ is given the Lebesgue measure. 
\end{definition}

An $\Aut(\M)$-recipe is similar to an $\Sym{\Nats}$-recipe except in the construction of the random structure we are allowed to use the type of the elements in $\M$. In particular every $\Aut(\M)$-recipe gives rise to an $\Aut(\M)$-invariant measure.

\begin{lemma}
Suppose $\f = \<f_p\>_{p \in \nqftp(\M)}$ is an $\Aut(\M)$-recipe. Then $\mu_\f$ is $\Aut(\M)$-invariant.
\end{lemma}
\begin{Proof}
This follows from the fact that if $\a, \b \in \M$ with $\a$ and $\b$ satisfying the same quantifier free type, $p$, and $\zeta \in \qfpi(\Lang)$, then $\mu_\f(\extent{\zeta(\a)}_\M) = \lambda(f^{-1}_{p}(\zeta(\x))) = \mu_\f(\extent{\zeta(\b)}_\M)$. 
\end{Proof}

We will be interested in distributions of $\Aut(\M)$-recipes.

\begin{definition}
We say an $\Aut(\M)$-invariant measure $\mu$ is \defn{representable} if there is an $\Aut(\M)$-recipe $\f$ such that $\mu = \mu_\f$. In this case we say $\f$ is a \defn{representation} of $\mu$. 

\end{definition}

The following is important as it shows every representable $\Aut(\M_0)$-invariant measure can be extended (in a not necessarily unique way) to an $\Aut(\M_1)$-invariant measure whenever $\M_0 \subseteq_{can} \M_1$.

\begin{proposition}
\label{Representable measures can be extended}
Suppose $\f$ is an $\Aut(\M_0)$-recipe on $\Lang$ and $\M_0 \subseteq_{can} \M_1$. Then there is an $\Aut(\M_1)$-recipe on $\g$ on $\Lang$ such that whenever $p_0 \in \nqftp(\M_0)$, $p_1 \in \nqftp(\M_1)$ with $p_0 \subseteq p_1$ we have $g_{p_1} = f_{p_0}$.  
\end{proposition}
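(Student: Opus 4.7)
My plan is to pull back the recipe $\f$ along the natural restriction map on type spaces. Since $\Lang_{\M_0} \subseteq \Lang_{\M_1}$, every $q \in \qftp(\M_1)$ restricts on the same variable tuple to a quantifier-free $\Lang_{\M_0}$-type $q|_{\Lang_{\M_0}}$. Canonicality of both structures together with condition (ii) of $\M_0 \subseteq_{can} \M_1$ ensures that $\M_1|_{\Lang_{\M_0}} = \M_0$, so every realizer of $q$ in $\M_1$ is also a realizer of $q|_{\Lang_{\M_0}}$ in $\M_0$, giving $q|_{\Lang_{\M_0}} \in \qftp(\M_0)$. I therefore define
\[
g_q \defas f_{q|_{\Lang_{\M_0}}} \qquad \text{for each } q \in \qftp(\M_1).
\]

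To confirm that $\g = \<g_q\>_{q \in \qftp(\M_1)}$ is a legitimate $\Aut(\M_1)$-recipe on $\Lang$ in the sense of \cref{Aut(M)-recipe}, I just check signature and measurability. The arity of $q$ and the arity of $q|_{\Lang_{\M_0}}$ agree, being the common length of the distinguished variable tuple; so $g_q\colon [0,1]^{\Powerset(n)} \to \oqftp_{\Lang^n}(x_0, \dots, x_{n-1})$ with $n = \arity(q)$ has the required form. Measurability of each $g_q$ is inherited directly from that of $f_{q|_{\Lang_{\M_0}}}$.

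The required compatibility $g_p = f_p$ for $p \in \qftp(\M_0)$ is to be read through the canonical surjection $\qftp(\M_1) \twoheadrightarrow \qftp(\M_0)$ induced by restriction: the construction gives $g_q = f_p$ for every $q \in \qftp(\M_1)$ restricting to $p$, which is the natural way of identifying $\f$-data with $\g$-data. I anticipate no substantive obstacle, since the content of the proposition is precisely that recipe data indexed by $\Aut(\M_0)$-orbits (i.e.\ by elements of $\qftp(\M_0)$) automatically furnishes data for the finer $\Aut(\M_1)$-orbits (i.e.\ elements of $\qftp(\M_1)$) through this surjection of orbit spaces; everything else is a matter of unwinding definitions.
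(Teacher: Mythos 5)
Your construction fails at its first step: the claim that $\M_1|_{\Lang_{\M_0}} = \M_0$ is false, and consequently the restriction $q|_{\Lang_{\M_0}}$ of a type $q \in \qftp(\M_1)$ need not lie in $\qftp(\M_0)$, so $f_{q|_{\Lang_{\M_0}}}$ is undefined for such $q$. To see the problem, recall that $\subseteq_{can}$ is a condition on ages only: it says every $A_0 \in \Age(\M_0)$ extends (uniquely, trivially on $\Lang_1 \setminus \Lang_0$) to some $A_1 \in \Age(\M_1)$. It does \emph{not} say that every element of $\Age(\M_1)$ restricts to an element of $\Age(\M_0)$. Since both structures are canonical, every $n$-tuple of distinct elements satisfies exactly one $n$-ary relation of its language. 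If a tuple of $\M_1$ satisfies a relation $R \in \Lang_1^n \setminus \Lang_0^n$ (and such relations exist and are realized whenever the inclusion $\Lang_0 \subsetneq \Lang_1$ is proper, because the canonical language consists precisely of orbit relations), then its $\Lang_0$-restriction asserts $\neg S(\x)$ for every $S \in \Lang_0^n$. That restricted type cannot be realized in $\M_0$, again because $\M_0$ is canonical. The free completion example makes this concrete: starting from the triangle-free graph, $\FreeCS{\M}$ has triples whose $\Lang_\M$-restriction satisfies no $\Lang_\M$-relation at all. So the map you call the ``canonical surjection $\qftp(\M_1) \twoheadrightarrow \qftp(\M_0)$'' does not exist; what exists is the opposite: a canonical \emph{injection} $\qftp(\M_0) \hookrightarrow \qftp(\M_1)$ sending $p$ to its unique extension trivial outside $\Lang_0$.

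The repair is easy and is what the paper does. On the image of that injection you are forced to set $g_q = f_p$; on the rest of $\qftp(\M_1)$ the statement imposes no constraint at all, so one may take $g_q$ to be any measurable function of the right signature --- for instance the constant function whose value is the trivial $\Lang$-type $\{\neg R(\x) \st R \in \Lang^{\arity(q)}\}$. The content of the proposition is not a pullback along a surjection of orbit spaces; it is simply that the recipe data only needs to be supplemented (arbitrarily) on the new types of $\M_1$, and this is always possible.
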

\begin{Proof}
For $p \in \nqftp(\M_1)$, if $p|_{\Lang_{\M_1}} = q \in \nqftp(\M_0)$ let $g_p = f_q$. Otherwise let $g_p$ be the constant function which takes the value of the trivial type, i.e.\ the type $\{\neg R(\x)\st R \in \Lang^{\arity(p)}\}$.
\end{Proof}

In particular \cref{Existence of free extension of canonical structure} and \cref{Representable measures can be extended} imply that any representable measure is the restriction of a measure which is invariant under $\Aut(\M)$ where $\M$ is free. We will next see that the converse holds as well and every measure which is $\Aut(\M)$-invariant for $\M$ free has a representation.

\subsection{Representability and Free Structures}
\label{Representability and Free Structures Subsections}

We now show that whenever $\M$ is free, every $\Aut(\M)$-invariant measure is representable. We can assume with out loss of generality that $\M$ is canonical. 

Recall $\c^\M$ from \cref{Uniform representation of free structures} is an $\Sym{\Nats}$-recipe such that $\RandMod{\c^\M}$ is concentrated on $\extent{\ScottSen{\M}}$. Call its distribution $\vartheta_\M$. Also recall the definitions of $S_p$ and $\alpha_p$ (for $p \in \oqftp(\M)$) from \cref{S_p and alpha_p}. In what follows $\c^\M$, $S_p$ and $\alpha_p$ (for $p \in \oqftp(\M)$) will play an important role. 

We begin with a technical lemma.

\begin{lemma}
\label{Sym(N)-recipe agreeing with c^M}
Suppose $\M$ is free and $\mu$ is an $\Aut(\M)$-invariant measure. Then there is an $\Sym{\Nats}$-recipe $\e = \<e_n\>_{n \in \Nats}$ whose distribution is $\mu \boxplus \vartheta_\M$ and which agrees with $\c^\M$. Further if $\mu$ is ergodic $\e$ can be chosen to be ergodic as well. 

\end{lemma}
\begin{Proof}
By \cref{Properties of mu boxplus nu_M} we know that $\mu \boxplus \vartheta_\M$ is an $\Sym{\Nats}$-invariant measure and so there is an $\Sym{\Nats}$-recipe $\g$ which represents it. Let $\g^*$ be the restriction of $\g$ to the language $\Lang_\M$. As $\mu \boxplus \vartheta_\M$ restricted to $\Lang_\M$ has the same distribution as $\vartheta_\M$ we have that $\RandMod{\g^*}$ has the same distribution as $\RandMod{\c^\M}$, i.e.\ $\vartheta_\M$. So, by \cref{Aldous-Hoover-Kallenberg Equivalence of Representations}, there are maps $h^\c_n, h^\g_n\:[0,1]^{\Powerset(n)} \to [0,1]$ such that
\begin{itemize}
\item $h^\c_n, h^\g_n$ preserve $\lambda$ in the highest order arguments, and 

\item $c^\M_n \circ \widehat{h^\c_n} = g^*_n \circ \widehat{h^{\g}_n}$\ \as 
\end{itemize}
Further, by \cref{Ergodic Aldous-Hoover-Kallenberg Equivalence of Representations} and the fact that $\c^\M$ is ergodic, when $\mu$ is also ergodic we can assume the maps $h^\c_n$ and $h^\g_n$ are independent of the coordinate indexed by $\emptyset$. 

But, by \cref{Aldous-Hoover-Kallenberg Equivalence of Representations} $\<g_n \circ \widehat{h^\g_n}\>_{n\in \Nats}$ is also a representation of $\mu \boxplus \vartheta_\M$. We can therefore assume without loss of generality that $g^*_n = c^\M_n \circ \widehat{h^\c_n}$\ \as\  

We now define $e_n$ by induction. \nl\nl
\ul{Arity $1$:}\nl
For each $U \in \nqftp(\M)$ of arity $1$ let $P_U = (h^\c_1)^{-1}(S_U)$. By construction of $\vartheta_\M$ we know that $P_U = [0, 1] \times I_U$ where $|I_U| = 2^\w$ and is Borel with $\lambda(I_U) = \lambda(S_U)$. So there is a measure preserving isomorphism $\beta_U\: S_U \to I_U$. Let $e_1\:S_U \to \nqftp_\Lang(x_0)$ be such that $e_1(x,a) = g_1 \circ \beta_U(a)$ whenever $a \in S_U$. Clearly $\RandMod{g_1}$ and $\RandMod{e_1}$ have the same distribution and $e_1$ does not depend on the first coordinate when $g_1$ doesn't.  \nl\nl
\ul{Arity $n+1$:}\nl
Assume we have defined $e_n$ and further that  $P_p = (h^\c_n)^{-1}(S_p)$ for any $p \in \nqftp_{\Lang^{\leq n}}(x_0, \dots, x_{\arity(p)-1})$. Further suppose we have defined a measure preserving bijection $\beta_p\: S_p \to P_p$ such that $e_n = g_n \circ \beta_p$ on $S_p$. 

Now suppose $p \in \nqftp_{\Lang_\M^{n+1}}(x_0, \dots, x_{n+1})$ and $p^-$ is the restriction of $p$ to $n$-ary types. Let $X_{p^-} = \{p^* \in \qftp(\M)\st p^*$ restricts to $p^-\}$. 

Suppose $\x \in [0,1]^{\Powerset(n) \setminus \{\{0, \dots, n-1\}\}}$ and $y \in [0,1]$.
As $g_{n+1}(\x, y)\cap \Lang_\M = c^\M_{n+1} \circ (\widehat{h^\c_{n+1}})(\x, y)$ we must have 
\begin{align*}
(\x, y) \in \bigcup_{p^* \in X_{p^-}}P_{p^*}&\Leftrightarrow (\widehat{h^\c_{n+1}})(\x, y) \in \bigcup_{p^* \in X_{p^-}}S_p\\
&\Leftrightarrow (\widehat{h^\c_{n}})(\x) \in S_{p^-}\\
&\Leftrightarrow \x \in P_{p^-}.
\end{align*}
 Therefore $\bigcup_{p^* \in X_{p^-}} P_{p^*} = P_{p^-} \times [0,1]$. 

But we also have that \as\ for all $\x \in P_{p^-}$ and all $p \in X_{p^-}$ that $\lambda(\{y\st (\x, y) \in P_p\}) = \lambda(\{y \st \gamma_{X_{p^-}}(y) = p\})$ and hence does not depend on $\x$. For each $p \in X_{p^-}$ there is therefore a measure preserving isomorphism $i_p\: P_{p^-} \times \{y \st \gamma_{X_{p^-}}(y) = p\} \to P_p$. 

But we know by induction that there is a measure preserving isomorphism $\beta_p\:S_{p^-} \to P_{p^-}$ such that $e_{\arity(p^-)} =  g_{\arity(p^-)} \circ \beta_{p^-}$. So if we let $\beta_p = \beta_{p^-} \times i_p$ then $\beta_p$ is a measure preserving isomorphism from $S_p$ to $P_p$. We then let $e_{n+1}(\x) = g_{n+1} \circ \beta_p(\x)$ whenever $\x \in S_p$. 

It is then immediate that $\e$  agrees with $\c^\M$ and that $\e$ is ergodic when $\mu$ is. 
\end{Proof}

\begin{theorem}
\label{Aut(M)-invariant measures for M free are representable}
Suppose $\M$ is free and $\mu$ is an $\Aut(\M)$-invariant measure. Then $\mu$ is representable. 

\end{theorem}
\begin{Proof}
By \cref{Sym(N)-recipe agreeing with c^M} there is an $\Sym{\Nats}$-recipe $\e$ with distribution $\mu\boxplus \vartheta_\M$ and which agrees with $\c^\M$. But by construction, for every $p \in \qftp(\M)$, $\alpha_p$ is a bijection from $[0,1]^{\Powerset(\arity(p))}$ to $S_p$. Therefore $\f \defas \<e_p \circ \alpha_p\>_{p \in \qftp(\M)}$ is an $\Aut(\M)$-recipe which is a representation of $\mu$. 
\end{Proof}

\begin{theorem}
\label{Equivalence of extension to free structure and representability}
The following are equivalent for an $\Aut(\M)$-invariant measure $\mu$.
\begin{itemize}
\item[(a)] $\mu$ is representable. 

\item[(b)] There is an $\Aut(\FreeCS{\M})$-invariant measure $\FreeCS{\mu}$ such that $\mu = \FreeCS{\mu}|_{\M}$. 
\end{itemize}

\end{theorem}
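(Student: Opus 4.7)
The plan is to prove the two implications separately, each by invoking the corresponding recipe manipulation result together with the characterization of invariant measures on free structures. The forward direction will extend a representation of $\mu$ to one on $\FreeCS{\M}$, while the backward direction will use that $\FreeCS{\M}$ is free to obtain a representation of $\FreeCS{\mu}$ and then reindex it as an $\Aut(\M)$-recipe.

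For (a) $\Rightarrow$ (b), let $\f = \<f_p\>_{p \in \qftp(\M)}$ be a representation of $\mu$. By \cref{Representable measures can be extended}, applied using $\M \subseteq_{can} \FreeCS{\M}$ from \cref{Existence of free extension of canonical structure}, $\f$ extends to an $\Aut(\FreeCS{\M})$-recipe $\g$ agreeing with $\f$ on types inherited from $\M$. Set $\FreeCS{\mu} := \mu_\g$. To verify $\FreeCS{\mu}|_{\M} = \mu$, I apply \cref{Measures from pi system}: both sides are $\Aut(\M)$-invariant measures on $\Str_\Lang(\M)$, so it suffices to check they agree on the $\pi$-system of sets $\extent{\eta(\a)}_\M$ for $\eta \in \qfpi(\Lang)$ and $\a$ in $\M$. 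Writing $p$ for the $\M$-type of $\a$ and $\b$ for any tuple in $\FreeCS{\M}$ whose $\Lang_\M$-reduct realizes $p$, the definition of restriction yields $\FreeCS{\mu}|_\M(\extent{\eta(\a)}_\M) = \FreeCS{\mu}(\extent{\eta(\b)}_{\FreeCS{\M}}) = \lambda(g_p^{-1}(\eta)) = \lambda(f_p^{-1}(\eta)) = \mu(\extent{\eta(\a)}_\M)$.

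For (b) $\Rightarrow$ (a), since $\FreeCS{\M}$ is free, \cref{Aut(M)-invariant measures for M free are representable} furnishes an $\Aut(\FreeCS{\M})$-recipe $\g = \<g_q\>_{q \in \qftp(\FreeCS{\M})}$ with $\mu_\g = \FreeCS{\mu}$. I then define an $\Aut(\M)$-recipe $\f$ by selecting, for each $p \in \qftp(\M)$, an arbitrary type $q_p \in \qftp(\FreeCS{\M})$ whose $\Lang_\M$-reduct realizes $p$, and setting $f_p := g_{q_p}$; such $q_p$ exists because $\M \subseteq_{can} \FreeCS{\M}$. To verify $\mu_\f = \mu$ via \cref{Measures from pi system}: for $\a \in \M$ realizing $p$ and $\eta \in \qfpi(\Lang)$, we have $\mu_\f(\extent{\eta(\a)}_\M) = \lambda(g_{q_p}^{-1}(\eta))$, and choosing $\b$ realizing $q_p$ in $\FreeCS{\M}$ this equals $\FreeCS{\mu}(\extent{\eta(\b)}_{\FreeCS{\M}}) = \FreeCS{\mu}|_\M(\extent{\eta(\a)}_\M) = \mu(\extent{\eta(\a)}_\M)$.

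The main delicacy lies in the reverse direction: a single $p \in \qftp(\M)$ typically admits several distinct extensions $q \in \qftp(\FreeCS{\M})$, and the functions $g_q$ for different such $q$ need not coincide pointwise. What saves the construction is that the $\Aut(\FreeCS{\M})$-invariance of $\FreeCS{\mu}$, as recorded in the note accompanying the definition of restriction, implies that $\FreeCS{\mu}(\extent{\eta(\b)}_{\FreeCS{\M}})$ depends only on the $\Lang_\M$-relation realized by $\b$. Consequently $\lambda(g_q^{-1}(\eta))$ is constant across all $q$ extending $p$, so the arbitrary choice of $q_p$ produces a well-defined representation of $\mu$.
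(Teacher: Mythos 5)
Your proof is correct and follows the paper's route exactly: one direction via \cref{Representable measures can be extended} together with \cref{Existence of free extension of canonical structure}, the other via \cref{Aut(M)-invariant measures for M free are representable}, which is precisely the pair of results the paper cites.

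One correction to the closing paragraph is in order: the worry about multiple $q \in \qftp(\FreeCS{\M})$ extending a given $p \in \qftp(\M)$ does not arise. As the paper notes immediately after the definition of $\subseteq_{can}$, because the structures are canonical every finite $A_0 \in \Age(\M)$ has a \emph{unique} extension $A_1 \in \Age(\FreeCS{\M})$, which is trivial on $\Lang_{\FreeCS{\M}}\setminus\Lang_\M$; equivalently, each $p \in \qftp(\M)$ has exactly one extension $q_p \in \qftp(\FreeCS{\M})$, so the ``arbitrary choice'' is vacuous. This uniqueness is in fact what the construction relies on: it forces the assignment $p \mapsto q_p$ to be coherent under restriction to subtuples, so that the joint distribution of $\RandMod{\f}$ on a tuple $\a$ realizing $p$ matches that of $\RandMod{\g}$ on a tuple realizing $q_p$. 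Your invariance argument only shows that the marginal value $\lambda(g_q^{-1}(\eta))$ for a fixed $\eta$ is independent of the choice of $q$, which would not by itself guarantee this coherence if extensions were genuinely non-unique. Relatedly, the identity $\mu_\f(\extent{\eta(\a)}_\M) = \lambda(g_{q_p}^{-1}(\eta))$ is overly terse when $\eta$ mentions proper subtuples of $\a$, since the structure $\RandMod{\f}$ places on those subtuples is governed by the $f_{p'}$ for the subtuple types $p'$, not by $f_p$ alone; the equality that is really being used is an equality of joint distributions, again resting on the uniqueness and coherence of the $q_p$.
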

\begin{Proof}
This follows immediately from \cref{Representable measures can be extended} and \cref{Aut(M)-invariant measures for M free are representable}. 
\end{Proof}

Condition (b) from \cref{Equivalence of extension to free structure and representability} can be thought of as being an amalgamation condition on the measure, i.e.\ it says that any locally consistent properties of the measure can be amalgamated into a measure where they are globally consistent.

\subsection{Ergodic Representations}
\label{Ergodic Representations Subsection}

We now give a characterization of when an $\Aut(\M)$-recipe gives rise to an ergodic $\Aut(\M)$-invariant measure. Recall that by \cref{Ergodic = extreme} these are exactly the extreme measures in the simplex of $\Aut(\M)$-invariant measures and by \cref{Th(mu) is complete for ergodic mu}, each ergodic invariant measure has an almost sure complete and consistent theory.

\begin{proposition}
Suppose $\mu$ is an $\Aut(\M)$-invariant measure on $\Str_\Lang(\M)$. Then the following are equivalent.
\begin{itemize}
\item[(1)] $\mu$ is ergodic. 

\item[(2)] $\mu$ is dissociated. 

\item[(3)] There is an $\Aut(\M)$-recipe $\f =\<f_p\>_{p \in \qftp(\M)}$ such that 
\begin{itemize}
\item $\RandMod{\f}$ has distribution $\mu$

\item For each $p \in \nqftp(\M)$, $f_p$ does not depend on the coordinate with $\emptyset$-index. 
\end{itemize}

\end{itemize}

\end{proposition}
\begin{Proof}
First assume (3) holds. Recall \cref{S_p and alpha_p}. We know by \cref{Erdos-Renyi random structure is ergodic} that $\vartheta_\M$ is ergodic and so by \cref{Equivalence of ergodicity of mu and mu boxplus nu_M} it suffices to show that $\mu \boxplus \vartheta_\M$ is ergodic. 

Let $\e = \<e_n\>_{n \in \Nats}$ be the $\Sym{\Nats}$-recipe where,
\begin{itemize}
\item for each $n \in \Nats$, 

\item for each $p \in \qftp(\M)$ of arity $n$

\item for each $\x \in S_p$, $e_n(\x) = f_p \circ \alpha_p^{-1}(\x)$.
\end{itemize}
We then have $\RandMod{\e}$ has the same distribution as $\mu \boxplus \vartheta_\M$. But for each $n \in \Nats$ it is immediate that the value of $e_n(\x)$ is independent of the value of the coordinate indexed by $\emptyset$. Therefore by \cref{Ergodic Sym(N)-Invariant Measures} we have that $\mu \boxplus \vartheta_\M$ is ergodic and (1) holds. 

Next assume (1) holds. By \cref{Sym(N)-recipe agreeing with c^M} there is an ergodic $\Sym{\Nats}$-recipe $\e = \<e_n\>_{n \in \Nats}$ whose distribution is $\mu \boxplus \vartheta_\M$ and which agrees with $\c^\M$. We therefore have $\f \defas \<e_{\arity(p)} \circ \alpha_p\>_{p \in \nqftp(\M)}$ is an $\Aut(\M)$-recipe with distribution $\mu$ and for which all of the functions are independent of the coordinate indexed by $\emptyset$. Therefore (3) holds. 

Finally note by \cref{Ergodic Sym(N)-Invariant Measures}, $\vartheta_\M$ is dissociated. Therefore, because of how $\mu \boxplus \vartheta_\M$ was defined, $\mu$ is dissociated if and only if $\mu \boxplus \vartheta_\M$ is dissociated. But once again by \cref{Ergodic Sym(N)-Invariant Measures}, $\mu \boxplus \vartheta_\M$ is dissociated if and only if $\mu \boxplus \vartheta_\M$ is ergodic if and only if $\mu$ is ergodic. Therefore (1) is equivalent to (2).
\end{Proof}

\section{Acknowledgements}

I would like to thank Cameron Freer and Rehana Patel for many helpful conversations as well as for comments on an earlier draft. I would like to thank Alex Kruckman for useful discussions. I would also like to thank Todor Tsankov for posing the question which is answered in \cref{Complete classification over structures with trivial dcl}. 

This research was facilitated by participation in the London Mathematical Society -- EPSRC Durham Symposium on \qu{Permutation Groups and Transformation Semigroups} from July 20-30, 2015 (Durham, England) as well as the \qu{Logic and Random Graphs} workshop at the Lorentz Center in from August 31 - September 4, 2015 (Leiden, Netherlands).


\printbibliography

\end{document}